\newtheoremstyle{dotless}{}{}{\itshape}{}{\bfseries}{}{ }{}
  \theoremstyle{dotless}
\newtheorem{theorem}{Theorem}[section] 
\newtheorem{conjecture}[theorem]{Conjecture}
\newtheorem{lemma}[theorem]{Lemma}
\newtheorem{corollary}[theorem]{Corollary}
\newtheorem{observation}[theorem]{Observation}
\newtheorem{construction}[theorem]{Construction}
\newtheorem*{theoremsp1}{Theorem \ref{final bounds}}
\newcommand{\abs}[1]{\left\lvert{#1}\right\rvert}
\newcommand{\FL}[1]{\left\lfloor #1 \right\rfloor}
\newcommand{\CL}[1]{\left\lceil #1 \right\rceil}
\DeclareMathOperator{\ex}{ex}
\DeclareMathOperator{\sat}{sat}
\title{Saturation of Berge Hypergraphs}
\author{Sean English, Nathan Graber, Pamela Kirkpatrick, \\ Abhishek Methuku, Eric C. Sullivan}
\begin{document}
\maketitle

\begin{abstract}

Given a graph $F$, a hypergraph is a Berge-$F$ if it can be obtained by expanding each edge in $F$ to a hyperedge containing it. A hypergraph $H$ is Berge-$F$-saturated if $H$ does not contain a subgraph that is a Berge-$F$, but for any edge $e\in E(\overline{H})$, $H+e$ does.
The $k$-uniform saturation number of Berge-$F$ is the minimum number of edges in a $k$-uniform Berge-$F$-saturated hypergraph on $n$ vertices. For $k=2$ this definition coincides with the classical definition of saturation for graphs. In this paper we study the saturation numbers for Berge triangles, paths, cycles, stars and matchings in $k$-uniform hypergraphs. 
\end{abstract}

\section{Introduction}~
Given simple graph $G$ and a collection of simple graphs $\mathcal{F}$, we say $G$ is $\mathcal{F}$-\textit{saturated} if $G$ does not contain any element of $\mathcal{F}$ as a subgraph, but $G+e$ contains some member of $\mathcal{F}$ as a subgraph for each $e\in E(\overline{G})$. 
The maximum possible number of edges in a graph $G$ on $n$ vertices that is $\mathcal{F}$-saturated is known as the {\it Tur\'an number} or the extremal number of $\mathcal{F}$, and is denoted $\ex(n, \mathcal{F})$.
In 1907, Mantel proved one of the first results on extremal numbers, finding them for triangles \cite{M07}. Mantel's result was generalized for all complete graphs in 1941 by Tur\'an \cite{T41}.

On the other end of the spectrum, the minimum number of edges of an $\mathcal{F}$-saturated graph on $n$ vertices is known as the {\it saturation number} of $\mathcal{F}$ and is denoted $\sat(n, \mathcal{F})$. When $\mathcal{F}=\{F\}$ contains only a single graph, we write $\sat(n,F)$ for convenience. Saturation numbers for graphs were first studied by Erd\H{o}s, Hajnal and Moon in \cite{EHM64}, where they determined $\sat(n,K_m)$ and the $K_m$-saturated graphs that achieve that saturation number.
In \cite{KT_minedge_1986}, K\'aszonyi and Tuza determine the saturation number for paths, stars and matchings, and provide a general upper bound.

\begin{theorem}[K\'aszonyi and Tuza]\mbox{}
	\begin{enumerate} 
		\item Let 
		\[
		a_m=\begin{cases}
		3\cdot 2^{t-1}-2 &\text{if }  m=2t \\
		4\cdot 2^{t-1}-2 &\text{if }  m=2t+1.
		\end{cases}
		\]
		If $n\geq a_m$ and $k\geq 6,$ then $\sat(n, P_m)=n-\left\lfloor\frac{n}{a_m}\right\rfloor$.
		\item Let $S_t=K_{1, t-1}$ denote a star on $t$ vertices. Then, 
		\[
		\sat(n, S_t)=\begin{cases}
		\binom{t-1}{2}+\binom{n-t+1}{2} &\text{if }t\leq n\leq \frac{3t-3}{2} \\[0.25em]
		\left\lceil\frac{(t-2)n}{2}-\frac{(t-1)^2}{8}\right\rceil &\text{if } \frac{3t-3}{2}\leq n
		\end{cases}
		\]
		\item For $n\geq 3t-3$, $\sat(n, tK_2)=3t-3.$
\end{enumerate}\end{theorem}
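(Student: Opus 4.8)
The three parts are essentially independent, so I would prove them separately, in increasing order of difficulty.

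\textbf{Matchings.} For the upper bound I would take $H_0$ to be the disjoint union of $t-1$ triangles together with $n-3(t-1)$ isolated vertices (possible since $n\ge 3t-3$); it has $3t-3$ edges and matching number $t-1$, so it contains no $tK_2$, and for every non-edge $e$ --- which must join two isolated vertices, an isolated vertex to a triangle, or two distinct triangles --- one checks in each case directly that $H_0+e$ has a matching of size $t$. For the lower bound, let $G$ be $tK_2$-saturated and write $\nu(G)$ for its matching number; since adding one edge raises $\nu$ by at most $1$, we get $\nu(G)=t-1$. I would then apply the Gallai--Edmonds decomposition $V(G)=D\cup A\cup C$: every component of $G[D]$ is factor-critical, and since adding an edge inside such a component leaves it factor-critical and (one checks) does not raise $\nu(G)$, saturation forces each component of $G[D]$ to be a complete graph of odd order; likewise, if some $w\in A\cup C$ were non-adjacent to a vertex $u$ left uncovered by a fixed maximum matching then $\nu(G+wu)=\nu(G)$, contradicting saturation, so every vertex of $A\cup C$ is adjacent to all $n-2(t-1)\ge t-1$ uncovered vertices. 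Writing the cliques of $G[D]$ as having $2s_i+1$ vertices and using $\nu(G)=|A|+\sum_i s_i+\tfrac12|C|=t-1$, a short count of the edges inside $D$ (at least $\sum_i\binom{2s_i+1}{2}\ge 3\sum_i s_i$), those joining $A\cup C$ to the uncovered vertices, and the perfect matching inside $C$, yields $|E(G)|\ge 3(t-1)$, the extremal case being $A=C=\emptyset$ with every $s_i\in\{0,1\}$; the finitely many cases with $t$ small can be checked by hand.

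\textbf{Stars.} Since $K_{1,t-1}\subseteq G$ exactly when $\Delta(G)\ge t-1$, a graph $G$ is $S_t$-saturated if and only if $\Delta(G)\le t-2$ and every non-edge has an endpoint of degree exactly $t-2$; the latter condition is equivalent to saying that $Q:=\{v:\deg_G v\le t-3\}$ induces a clique, which forces $|Q|\le t-2$ because a vertex of a clique $Q$ has degree at least $|Q|-1$. Thus $\sat(n,S_t)$ is the minimum of $\tfrac12\bigl((t-2)(n-|Q|)+\sum_{v\in Q}\deg_G v\bigr)$ over admissible graphs; bounding $\deg_G v\ge |Q|-1$ on $Q$ reduces this to minimizing $f(q):=q^2-(t-1)q+(t-2)n$ over feasible clique sizes $q$, with unconstrained minimum at $q=\tfrac{t-1}{2}$ giving value $(t-2)n-\tfrac{(t-1)^2}{4}$. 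When $n$ is large enough that the $n-q$ vertices outside $Q$ can carry an (almost) $(t-2)$-regular graph with $q\approx\tfrac{t-1}{2}$ --- i.e.\ $n-q\ge t-1$, equivalently $n\ge\tfrac{3t-3}{2}$ --- I would realize the optimum as $K_q$ together with such a graph on the remaining vertices, obtaining $\bigl\lceil\tfrac{(t-2)n}{2}-\tfrac{(t-1)^2}{8}\bigr\rceil$ after accounting for parity; when $t\le n<\tfrac{3t-3}{2}$ the constraint $n-q\ge t-1$ binds, forcing $q=n-t+1$ and the construction $K_{t-1}\sqcup K_{n-t+1}$ with $\binom{t-1}{2}+\binom{n-t+1}{2}$ edges. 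The only real work is checking that the relevant degree sequences are realizable (Erd\H os--Gallai) and handling the rounding.

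\textbf{Paths ($m\ge 6$).} For the upper bound I would construct a tree $T_m$ on $a_m$ vertices that is $P_m$-saturated and has the extra property that from every vertex of $T_m$ there is a path on at least $\lceil m/2\rceil$ vertices; $T_m$ is built by a recursion reflected in the identity $a_m=2a_{m-2}+2$, gluing two copies of a smaller extremal tree together by a short bridge so that the longest path of $T_m$ has exactly $m-1$ vertices while the ``reach'' from every vertex stays $\lceil m/2\rceil$. Taking $\lfloor n/a_m\rfloor$ disjoint copies of $T_m$ and distributing the fewer than $a_m$ leftover vertices as a smaller $P_m$-saturated piece attached suitably, one obtains a forest on $n$ vertices with exactly $n-\lfloor n/a_m\rfloor$ edges which is $P_m$-free (each component's longest path has at most $m-1$ vertices) and $P_m$-saturated: adding an edge inside a component creates $P_m$ because $T_m$ is saturated, and adding an edge between components $x\in C_1$, $y\in C_2$ creates $P_m$ by joining the two paths on $\lceil m/2\rceil$ vertices emanating from $x$ and from $y$. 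For the lower bound, let $G$ be $P_m$-saturated. Every component of $G$, with its induced edges, is connected and itself $P_m$-saturated, since any $P_m$ created by adding an internal edge stays inside that component; comparing two components shows that at most one of them fails to have a path on $\lceil m/2\rceil$ vertices from every vertex, and every $P_m$-saturated \emph{tree} has at least $a_m$ vertices (proved by an induction on $m$ mirroring the construction, via $a_m=2a_{m-2}+2$). Since $|E(G)|=n-c+\sum_C g_C$ where $c$ is the number of components and $g_C$ the cycle rank of $C$, and since the tree components (those with $g_C=0$) number at most $\lfloor n/a_m\rfloor$ by the previous sentence, it follows that $c-\sum_C g_C\le\lfloor n/a_m\rfloor$, whence $|E(G)|\ge n-\lfloor n/a_m\rfloor$.

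\textbf{The main obstacle} is clearly part (1): pinning down the extremal tree $T_m$ and verifying it is $P_m$-saturated --- naive candidates such as balanced spiders fail, since adding a leaf-to-centre edge there only produces a triangle hanging off the spider and no path on $m$ vertices --- together with the dual fact that no $P_m$-saturated tree has fewer than $a_m$ vertices, and the careful bookkeeping needed for the leftover vertices and for possibly dense components. Parts (2) and (3) are comparatively routine once the structural characterizations above are in hand.
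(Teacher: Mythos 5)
The paper does not prove this theorem; it is quoted as background from K\'aszonyi and Tuza \cite{KT_minedge_1986}, so there is no in-paper proof to compare your sketch against. (Note also that the hypothesis ``$k\ge 6$'' in the quoted statement is surely a typo for a lower bound on $m$ --- the variable $k$ plays no role in a purely graph-theoretic assertion.)

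As a reconstruction of the K\'aszonyi--Tuza arguments your outline has the right shape but, as you yourself flag, it defers the hard parts. For paths, essentially all of the content is in (i) exhibiting the extremal $P_m$-saturated tree on $a_m$ vertices and verifying it is saturated, and (ii) proving that every $P_m$-saturated tree has at least $a_m$ vertices; you state both and correctly identify them as the obstacle, but give no argument for either, and the recursion $a_m=2a_{m-2}+2$ by itself does not tell you what the gluing step is nor why it preserves the ``reach $\lceil m/2\rceil$ from every vertex'' property and saturation simultaneously. The scaffolding around it --- components of a saturated graph are themselves saturated, at most one component can have short reach, and $|E(G)|=n-c+\sum_C g_C$ --- is fine. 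For stars, the characterization ($\Delta\le t-2$, every non-edge meets a vertex of degree exactly $t-2$, hence $Q=\{v:\deg v\le t-3\}$ is a clique of size at most $t-2$) and the ensuing quadratic minimization in $q=|Q|$ are correct; what remains is realizability of the near-$(t-2)$-regular part and the rounding, which are routine but not free. For matchings, Gallai--Edmonds is a legitimate (though heavier than strictly necessary) route; the one place your count genuinely needs care is the inequality $(|A|+|C|)\cdot(n-2t+2)\ge 3|A|+|C|$, which follows outright only when the deficiency $n-2t+2$ is at least $3$, i.e.\ $t\ge 4$, so the ``small $t$ by hand'' caveat is doing real work for $t\in\{2,3\}$ rather than being a formality. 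None of this is a wrong turn, but the proposal is a plan for a proof rather than a proof.
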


The concept of saturation has been extended to hypergraphs, a generalization of graphs in which an edge can contain any number of vertices. A hypergraph is called $k$-{\it uniform} if every edge contains exactly $k$ vertices. Note that a $2$-uniform hypergraph is simply a graph. Unless otherwise stated, in this paper we will assume all hypergraphs are $k$-uniform. Given a collection of $k$-uniform hypergraphs $\mathcal{F}$, one can define $\mathcal{F}$-saturated and $\sat_k(n,\mathcal{F})$ analogously to the graph case.

The classical definition of a hypergraph cycle due to Berge is the following. A Berge cycle of length $k$ is an alternating sequence of distinct vertices and edges of the form $v_1$,$e_{1}$,$v_2$,$e_{2},\ldots,v_k$,$e_{k}$,$v_1$ where $v_i,v_{i+1} \in e_{i}$ for each $1\leq i\leq k-1$ and $v_k,v_1 \in e_{k}$ and is denoted Berge-$C_k$. A Berge-path is defined similarly. In this paper, whenever we refer to a path in a hypergraph, unless otherwise stated, this will refer to a Berge path.

Gerbner and Palmer \cite{GP15} gave the following generalization of the definitions of Berge cycles and Berge paths. Let $F$ be a graph and $H$ a hypergraph. We say $H$ is a \textit{Berge-F} if there is a bijection $\phi:E(F) \rightarrow E(H)$ such that $e \subseteq \phi(e)$ for all $e \in E(F)$. This can be thought of as expanding each edge of $F$ to an edge of $H$ or shrinking each edge of $H$ down to an edge of $F$.
For a graph $F$ we denote the set of all $k$-uniform hypergraphs that are a Berge-$F$ by $\mathcal{B}_k(F)$. Note that $\mathcal{B}_k(F)$ is finite for any graph $F$.
We say a hypergraph $H$ \textit{contains} a Berge-$F$ if $H$ contains a subhypergraph that is a Berge-$F$, and that $H$ is Berge-$F$-free otherwise. 
For example, the hypergraph in Figure \ref{bsatgraphex} contains a Berge-$P_5$, but is Berge-$P_6$-free. 

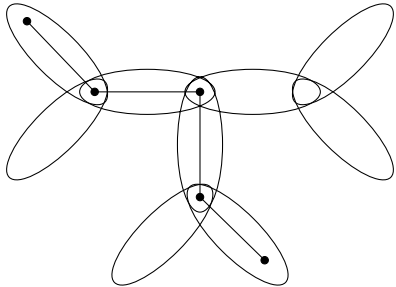
\begin{figure}[h]
	\begin{center}
		\begin{tikzpicture}[scale=0.4]
		\draw
		(-1.75,0)ellipse (2.25cm and .75cm)
		(1.75,0)ellipse (2.25cm and .75cm)
		(0,-1.75)ellipse (.75cm and 2.25cm);
		\draw[rotate around={-45:(-4.75,1.25)}] (-4.75,1.25) ellipse (2.25cm and .75cm);
		\draw[rotate around={45:(-4.75,-1.25)}] (-4.75,-1.25) ellipse (2.25cm and .75cm);
		\draw[rotate around={-135:(4.75,1.25)}] (4.75,1.25) ellipse (2.25cm and .75cm);
		\draw[rotate around={135:(4.75,-1.25)}] (4.75,-1.25) ellipse (2.25cm and .75cm);
		\draw[rotate around={135:(1.25,-4.75)}] (1.25,-4.75) ellipse (2.25cm and .75cm);
		\draw[rotate around={45:(-1.25,-4.75)}] (-1.25,-4.75) ellipse (2.25cm and .75cm);
		\draw[fill=black] (0,0) circle (3.5pt);
		\draw[fill=black] (-3.5,0) circle (3.5pt);
		\draw[fill=black] (-5.75,2.35) circle (3.5pt);
		\draw[fill=black] (0,-3.5) circle (3.5pt);
		\draw[fill=black] (2.15,-5.6) circle (3.5pt);
		\draw(0,0)--(0,-3.5);
		\draw(-3.5,0)--(0,0);
		\draw(0,-3.5)--(2.15,-5.6);
		\draw(-3.5,0)--(-5.75,2.35);
		\end{tikzpicture}
		\caption{A hypergraph $H$ that contains a Berge-$P_5$ and is Berge-$P_6$-free.}
		\label{bsatgraphex}
	\end{center}
\end{figure}

Tur\'an-type  extremal problems for hypergraphs in the Berge sense have attracted considerable attention \cite{lazebnik2003hypergraphs, gyHori2006triangle, bollobas2008pentagons, gyHori20123, furedi20173, timmons2016r, ergemlidze2017asymptotics, gerbner2017asymptotics, palmer2017tur},  where the goal is to determine the maximum possible number of edges in a Berge-$F$-free hypergraph, called the Tur\'an number of Berge-$F$. In this paper we study saturation for Berge hypergraphs.

For the sake of notation, we wll write $\sat_k(n,\text{Berge-}F)$ instead of $\sat_k(n,\mathcal{B}_k(F))$. In \cite{pikhurko1999minimum}, Pikhurko showed that $\sat_k(n,\mathcal{F})=O(n^{k-1})$ for any finite family of $k$-uniform hypergraphs $\mathcal{F}$, so we have that  $\sat_k(n,\text{Berge-}F)=O(n^{k-1})$.

Tur\'an numbers for Berge cycles and Berge paths were found by Gy{\H{o}}ri et al. \cite{GKL10, davoodi2016erd}.  
For general Berge graphs, Tur\'an numbers were determined by Gerbner and Palmer \cite{GP15}. They showed that if $H$ is Berge-$F$-free where $|e|\ge|V(F)|$ for all $e\in E(H)$, then $\abs{E(H)} \le  ex(n,F) $. 
Since any $k$-uniform hypergraph that is Berge-$F$-saturated is also Berge-$F$-free, we see that $\sat_k(n,\text{Berge-}F) \le  ex(n,F) = O(n^2)$ when $k\geq |V(F)|$. 

Despite intensive research concerning  Tur\'an numbers for Berge hypergraphs, no work has been done for saturation numbers for Berge hypergraphs. Determining the minimum number of edges that a hypergraph can have and be Berge-$F$-saturated is different from classical saturation in several ways. In some cases, the $k$-uniform hypergraph obtained by adding $k-2$ new vertices to each edge of a minimally $F$-saturated graph is also Berge-$F$-saturated but not minimal, and in other cases, it is not even Berge-$F$-saturated. For example, the five cycle $C_5$ is $K_3$-saturated, however if we add new vertices to each edge of $C_5$ to form a $3$-uniform hypergraph, then it is not Berge-$K_3$-saturated. Also, if one adds $k-2$ new vertices to each edge in the $P_m$-saturated graph given by K\'aszonyi and Tuza in \cite{KT_minedge_1986} to form a hypergraph, the resulting hypergraph will be Berge-$P_m$-saturated, but not minimal. 

We will explore the saturation number in the Berge sense for many classes of graphs. In Section \ref{section other saturation numbers}, we determine the saturation numbers for Berge triangles, cycles, matchings, and stars. The results of this section are summarized in the following theorem. It is worth noting that the results that only contain an upper bound may not be tight, but they establish linearity in each case.

For each of the following, let $k\geq 3$, $\ell\geq 1$ and $m\geq 4$.

\begin{table}[!htbp]
\begin{tabular}{ll}
		Theorem \ref{thm:matchings}:&For all $n\geq k(\ell-1)$,
		$\sat_k(n,\text{Berge-}\ell K_2)=\ell-1$.\\[0.3cm]
		Theorem \ref{thm:triangle}:&For all $n\geq k+1$, $\sat_k(n,\text{Berge-}K_3)=\CL{\frac{n-1}{k-1}}$.\\[0.3cm]
		Theorem \ref{thm:cycle}:& For all $k\geq m-1$ and $n>m(k-(m-2))+(m-2)$,
		$\sat_k(n,\text{Berge-}C_m)\leq \CL{\frac{n-m+2}{k-m+2}}$.\\[0.3cm]
		&If $k=m-2$ and $n\geq m^2$,
		$\sat_k(n,\text{Berge-}C_m)\leq\FL{\frac{n-1}{m-2}}\binom{m-1}{k}+\frac{(n-1)\mod{(m-2)}}{k-2}$.\\[0.3cm]
		&For all $k\leq m-3$, $\ell=\max\{m/2+1,k+1\}$ and $n\geq \ell^2$,\\[0.3cm]
		&$\sat_k(n,\text{Berge-}C_m)\leq\FL{\frac{n-1}{\ell-1}}\binom{\ell}{k}+\left((n-1)\mod{(\ell-1)}\right)\binom{\ell}{k-1}$.\\[0.3cm]
		Theorem \ref{thm:k+1star}:&For all $n\geq k^2$, 
		$\sat_k(n,\text{Berge-}K_{1,k+1})=n-(k-1)$.\\[0.3cm]
		Theorem \ref{thm:mstar}:&For all $k\leq m-1$, $\sat_k(n,\text{Berge-}K_{1,m})\leq \CL{\frac{n}{m}}\binom{m}{k}$.\\
\end{tabular}
\end{table}


Our main result, however, determines $\sat_k(n,\text{Berge-}P_m)$ where $P_m$ is the simple graph path on $m$ vertices. Let
\[
a^{(k)}_m=\min\{|E(T)|\mid T\text{ is a }k\text{-uniform Berge-}P_m\text{-saturated linear tree on at least $k+1$ vertices}\}.
\]
In Sections \ref{section path saturation numbers}, \ref{section constructions} and \ref{section lower bound} we determine the values of $a^{(k)}_m$ for all $k\geq 3$, $k\neq 5$ and all $m\geq 10$ and also establish the following theorem.

\begin{theoremsp1}
	Let $k\geq 3$ with $k\neq 5$. Let $m\geq 10$, and $n\geq (k-1)a^{(k)}_m+k-1$. Then
\[
\frac{1}{k-1}\left(n-\left\lfloor\frac{n-k+2}{(k-1)a^{(k)}_m+1}\right\rfloor-k+2\right)\leq \sat_k(n,\text{Berge-}P_m)\leq\left\lceil\frac{1}{k-1}\left(n-\left\lfloor\frac{n}{(k-1)a^{(k)}_m+1}\right\rfloor\right)\right\rceil.
\]
\end{theoremsp1}
It should be noted that the bounds given here differ by at most three. The case of uniformity $k=5$ for paths has some complications not present for other values of $k$, so this case will not be covered here.

\section{Saturation Numbers for Berge Paths}\label{section path saturation numbers}

\subsection{A Lower Bound for sat$\mathbf{_k(n,\text{Berge-}P_m)}$}

We give a lower bound for $\sat_k(n,\text{Berge-}P_m)$ in terms of $a^{(k)}_m$.
\begin{theorem}\label{main theorem}
	Let $k\geq 3$, $m\geq 10$ and $n\geq (k-1)a^{(k)}_m+k-1$. Then
	\[
	\sat_k(n,\text{Berge-}P_m)\geq \frac{1}{k-1}\left(n-\left\lfloor\frac{n-k+2}{(k-1)a^{(k)}_m+1}\right\rfloor-k+2\right).
	\]
\end{theorem}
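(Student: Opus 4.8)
\textbf{Proof proposal for Theorem \ref{main theorem}.}

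The plan is to take a Berge-$P_m$-saturated $k$-uniform hypergraph $H$ on $n$ vertices with the minimum number of edges and extract from it a structure that is close to a disjoint union of Berge-$P_m$-saturated linear trees, so that the definition of $a^{(k)}_m$ gives the bound. First I would consider the ``shadow-type'' 2-colored multigraph or the trace of $H$ on its vertex set; more concretely, I expect that a saturated $H$ must be ``connected enough'' and ``sparse enough'' that one can analyze it component by component after removing a bounded amount of overlap. A key first step is to argue that any Berge-$P_m$-saturated hypergraph has at most a linear number of edges, each of which is essentially used to ``witness'' one pair of vertices, and that the edges of $H$ can be associated with a linear (2-uniform) structure: choose for each hyperedge a representative pair of vertices, forming a graph $G$ on $V(H)$ with $|E(G)| = |E(H)|$. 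The heart of the argument is then to show $G$ (or each of its components) behaves like a Berge-$P_m$-saturated linear tree on at least $k+1$ vertices, so that $|E(H)|$ restricted to each nontrivial piece is at least $a^{(k)}_m$ times an appropriate factor.

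Next I would carry out a counting/extremal argument. Let $c$ be the number of components of $G$ (equivalently, of an auxiliary ``connectivity graph'' of $H$) that contain at least $k+1$ vertices; write $s$ for the number of vertices in small components. Saturation forces that isolated or tiny components cannot exist in too great a number — in fact, adding an edge joining two small components, or joining a small component to a large one, must create a Berge-$P_m$, which constrains the sizes. The linearity of the tree (i.e., hyperedges pairwise intersecting in at most one vertex) is not automatic, so I would either (a) show that a minimum saturated $H$ may be taken to be linear up to a bounded correction, or (b) work with a relaxed quantity and absorb the non-linearity into the $-k+2$ and floor terms. Using that each large component has at least $a^{(k)}_m$ hyperedges and spans at least $(k-1)a^{(k)}_m + k - 1$ vertices... wait — more carefully, a linear tree with $a$ edges on the ground set has roughly $(k-1)a+1$ vertices, so $n \leq$ (number of large components)$\cdot$(max vertices per minimal piece) $+ s$, and simultaneously $|E(H)| \geq$ (number of large components)$\cdot a^{(k)}_m + (\text{edges in small components})$. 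Combining these two inequalities and optimizing over the split between large and small components yields the stated bound, with the floor $\left\lfloor\frac{n-k+2}{(k-1)a^{(k)}_m+1}\right\rfloor$ arising as the maximum possible number of large pieces and the $\frac{1}{k-1}$ and $-k+2$ coming from the vertex-to-edge ratio $(k-1)a+1$ in a linear tree.

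The main obstacle, I expect, is controlling the global structure of a minimum Berge-$P_m$-saturated hypergraph: unlike in the graph case, a Berge-$P_m$-free hypergraph need not decompose into tree-like blocks, and hyperedges can overlap in complicated ways, so establishing that the edge set is essentially that of a forest of linear trees — or at least bounding the deviation — is delicate. In particular I would need a lemma saying that in a saturated $H$, components are ``Berge-$P_m$-saturated in their own right'' (which requires that adding an edge inside a component, not just across components, creates a Berge-$P_m$ using only that component's edges), and a lemma bounding how much vertices and edges can be ``shared.'' I anticipate these structural lemmas are where most of the work lies, and that once they are in place the final inequality is a short optimization. The $-k+2$ slack and the use of $n - k + 2$ inside the floor strongly suggest that the argument deliberately discards one ``boundary'' piece of size about $k-1$ to avoid edge-effect complications, which is consistent with the bounds in Theorem \ref{final bounds} differing by at most three.
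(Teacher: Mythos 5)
Your high-level instinct — decompose a minimum Berge-$P_m$-saturated $H$ into components, use $a^{(k)}_m$ to control the ``tree-like'' pieces, and finish with a vertex-to-edge count — is broadly aligned with what the paper does, and you correctly flag the (easy but necessary) observation that a linear-tree component of a saturated hypergraph is itself Berge-$P_m$-saturated. However, there are several genuine gaps.

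First, the ``shadow graph'' / representative-pair detour is not needed and is the wrong move here: the paper never builds a $2$-uniform auxiliary graph. It works directly with the hypergraph components and uses only the elementary fact that a connected $k$-uniform component on $n_0$ vertices has at least $\frac{n_0}{k-1}$ edges, with equality impossible, and a linear tree on $n_0$ vertices has exactly $\frac{n_0-1}{k-1}$ edges. This immediately gives the clean count $|E(H_0)|\geq\frac{1}{k-1}(n-t)$, where $t$ is the number of components that are linear trees (including isolated vertices). Your proposed count $|E(H)|\geq t\cdot a^{(k)}_m+\cdots$ is not the count that yields the stated formula, and your inequality ``$n\leq(\text{number of large components})\cdot(\text{max vertices per minimal piece})+s$'' goes the wrong direction: what you need is that each non-trivial linear-tree component has \emph{at least} $b=(k-1)a^{(k)}_m+1$ vertices, which upper-bounds their number by $\lfloor(n-k+2)/b\rfloor$ once isolated vertices are controlled.

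Second, you misdiagnose linearity as an obstacle. Non-linear components are not a problem you need to ``absorb'': a component on $n_0$ vertices that is not a linear tree has \emph{more} edges per vertex ($\geq n_0/(k-1)$ rather than $(n_0-1)/(k-1)$), so such components only make the lower bound easier. The entire weight of the argument falls on bounding the number $t$ of linear-tree components, not on forcing linearity on $H$.

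Third, and most concretely, you never engage with the case analysis that actually produces the $-k+2$ and the $n-k+2$ inside the floor: the paper separately handles (i) the presence of a single-edge (isolated-edge) component, (ii) the situation with no isolated edges but at least $k-1$ isolated vertices, and (iii) the generic case with no isolated edges and at most $k-2$ isolated vertices. Cases (i) and (ii) are dispatched by showing that no component can be a non-trivial linear tree there, which forces $|E(H_0)|\geq\frac{n-1}{k-1}$ or $\frac{n-k+1}{k-1}$ directly; only in case (iii) does the $a^{(k)}_m$ bound on tree sizes enter via $t\leq\lfloor\frac{n-k+2}{b}\rfloor+k-2$. Your proposal gestures at ``edge effects'' and a deliberately discarded boundary piece, but without this explicit trichotomy there is no way to control the small components and close the argument. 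As written, the proposal would not compile into a proof.
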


\begin{proof}

	Let $H_0$ be a minimal $k$-uniform Berge-$P_m$-saturated hypergraph on $n$ vertices. Observe that $H_0$ cannot have more than $k-1$ isolated vertices.

	\noindent \textbf{Case 1.} $H_0$ contains a single edge, $e$ as a component.
	
	If $H_0$ contains another component, say $T$ that is a linear tree, then there must be a path of length at least $m-2$ starting at each vertex of $T$, since otherwise we can add an edge containing that vertex and $k-1$ vertices in $e$ without creating a Berge-$P_m$. Then if we consider a vertex in the center of $T$, we have that this vertex has a path of length at least $m-2$ away from it, and since it is in the center, a second path of at least length $m-3$ away from it, and these paths can share at most one edge, so putting these two together, we have a path of length at least $2m-6> m-1$, so $T$ contains a Berge-$P_m$, contradicting saturation. Thus, $e$ is the only component of $H_0$ that is a linear tree. Since any $k$-uniform hypergraph on $n_0$ vertices, no component of which is a linear tree, has at least $\frac{n_0}{k-1}$ edges, we have

	
	\[
		|E(H_0)| \geq \frac{n-k}{k-1}+1=\frac{n-1}{k-1} >\frac{1}{k-1}\left(n-\left\lfloor\frac{n-k+2}{(k-1)a_m^{(k)}+1}\right\rfloor-k+2\right). 
	\]
	
	\noindent \textbf{Case 2.} $H_0$ has no isolated edges and at least $k-1$ isolated vertices.
	
	If $H_0$ has a leaf, then we can add an edge containing the $k-1$ isolated vertices and the vertex of degree greater than 1 in the leaf without creating a Berge-$P_m$. Thus $H_0$ would not be saturated. Then every component in $H_0$ that is not an isolated vertex cannot be a linear tree since any non-trivial linear tree contains a leaf. Again, since any $k$-uniform hypergraph on $n_0$ vertices, no component of which is a linear tree, has at least $\frac{n_0}{k-1}$ edges and since $n\geq (k-1)a^{(k)}_m+k-1$, we have
	\[
	|E(H_0)|\geq\frac{n-(k-1)}{k-1}\geq\frac{1}{k-1}\left(n-\left\lfloor\frac{n-k+2}{(k-1)a_m^{(k)}+1}\right\rfloor-k+2\right).
	\]
	
	\noindent \textbf{Case 3.} $H_0$ has no isolated edges, and no more than $k-2$ isolated vertices.
	
	Assume that $H_0$ has $c\geq 1$ connected components $C_1,\dots,C_c$ and assume without loss of generality that the first $t$ of these are linear trees for some $0\leq t\leq c$. If a linear tree has $a^{(k)}_m$ edges, then it has $b=(k-1)a^{(k)}_m+1$ vertices. There are at most $k-2$ isolated vertices in $H_0$, so there are at least $t-k+2$ non-trivial trees in $H_0$. This implies that  $(t-k+2) b\leq n-k+2$, or $t\leq \lfloor\frac{n-k+2}b\rfloor+k-2$. For $i\leq t$, $|E(C_i)|=\frac{|V(C_i)|-1}{k-1}$, and for $i>t$, $|E(C_i)|\geq\frac{|V(C_i)|}{k-1}$. Thus
	\[
	|E(H_0)|\geq \sum_{i=1}^t \frac{|V(C_i)|-1}{k-1} + \sum_{i=t+1}^c \frac{|V(C_i)|}{k-1}=\frac{1}{k-1}(n-t)\geq \frac{1}{k-1}\left(n-\left\lfloor\frac{n-k+2}{b}\right\rfloor-k+2\right).
	\]
\end{proof}

In Section $\ref{section constructions}$ we will give constructions for $k$-uniform Berge-$P_m$-saturated linear trees and in Section $\ref{section lower bound}$ we will show that these constructions are minimal. The results of these sections will imply the following theorem.

\begin{theorem}\label{a^k_m theorem}
	Let $m\geq 10$. If $m=4s+r$ for $1\leq r\leq 4$, then
	\[
	a^{(3)}_m=(3+r)2^s-5.
	\]
	If $m=6s+r$ for $0\leq r\leq 5$, then
	\[
	a^{(4)}_m=(6+r)2^s-8.
	\]
	If $k\geq 6$, then
	\[
	a^{(k)}_m=
	\begin{cases} 
	2^{s+1}+2^s+2^{s-1}+2^{s-2}-6&\text{ if }m=4s,\\
	2^{s+2}+2^{s-1}-6&\text{ if }m=4s+1,\\
	2^{s+2}+2^{s}-6&\text{ if }m=4s+2,\\
	2^{s+2}+2^{s+1}+2^{s-1}-6&\text{ if }m=4s+3.
	\end{cases}
	\]
\end{theorem}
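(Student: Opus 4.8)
\textbf{Proof proposal for Theorem \ref{a^k_m theorem}.}

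The quantity $a^{(k)}_m$ is defined as a minimum over $k$-uniform Berge-$P_m$-saturated linear trees on at least $k+1$ vertices, so the plan is to prove matching upper and lower bounds. For the upper bound, I would exhibit, for each residue class of $m$ modulo the relevant period ($4$ for $k=3$ and $k\ge 6$, $6$ for $k=4$), an explicit family of linear trees achieving the stated edge count, and verify that each such tree is Berge-$P_m$-saturated: it contains no Berge-$P_m$, yet adding any edge on its vertex set creates one. The natural building block is a ``broom''-type or balanced binary-branching linear tree whose longest Berge path has exactly $m-1$ vertices, so that saturation is forced once one checks that every non-edge, when added, joins two far-apart vertices into a long enough path; the doubling structure $2^s$ in the formulas strongly suggests iterated binary branching with a bounded number of extra pendant edges tuned by $r$. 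This construction step is deferred in the paper to Section \ref{section constructions}, so here I would simply cite those constructions to get $a^{(k)}_m \le (\text{stated value})$.

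For the lower bound, I would argue that no Berge-$P_m$-saturated linear tree on at least $k+1$ vertices can have fewer edges than the claimed value; this is the content deferred to Section \ref{section lower bound}. The key structural fact to exploit is that in a Berge-$P_m$-saturated linear tree $T$, for every vertex $v$ there must be a Berge path of length at least $m-2$ starting at $v$ (otherwise one could add an edge through $v$ and $k-1$ vertices of some other component — or, within a tree, the saturation condition forces long paths from every vertex, as in Case 1 of the proof of Theorem \ref{main theorem}). Combined with the fact that $T$ contains no Berge-$P_m$ (its longest Berge path has at most $m-1$ vertices), this pins down the ``eccentricity'' structure of $T$ tightly: roughly, $T$ must look like a tree of depth about $m/2$ around a center, and to minimize edges subject to every leaf being at depth $\ge m-2$ from the far side while the diameter stays $\le m-1$, one is driven to a branching construction — each edge should be ``reused'' by as many required long paths as possible, which caps the depth gain per edge and yields the exponential lower bound via a counting/recursion on subtree sizes. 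I would set up a recursion: let $f(d)$ be the minimum number of edges (or vertices) in a rooted linear sub-tree such that every vertex admits a Berge path of length $\ge d$ avoiding the root direction; show $f(d)$ satisfies a doubling-type recurrence with additive constants depending on $k$ and on the parity of $d$, solve it, and optimize the placement of the center.

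\textbf{Main obstacle.} The hard part will be the lower bound, specifically making the extremal-tree structure argument airtight: one must show that the minimizing tree genuinely has the binary-branching shape rather than some irregular configuration that trades depth for width more cheaply, and one must handle carefully the interaction between the Berge (hypergraph) structure — where a single hyperedge of size $k$ can be shared by several ``graph edges'' of a path and can contribute vertices to multiple branches — and the underlying combinatorics, since for large $k$ a hyperedge straddling a branch point behaves quite differently than for $k=3$. This is exactly why the three regimes $k=3$, $k=4$, $k\ge 6$ have different formulas and why $k=5$ is excluded: the boundary case $k=5$ is where the branching-versus-sharing trade-off is balanced in a degenerate way. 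I would treat $k=3$ first as the cleanest case, then $k\ge 6$ where hyperedges are large enough to be ``absorbing,'' and isolate the $k=4$ analysis separately with its period-$6$ behavior.
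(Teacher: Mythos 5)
Your upper-bound plan (cite the saturated tree constructions of Section \ref{section constructions} and count their edges) is exactly what the paper does, so that half is fine. The lower-bound plan, however, rests on a structural claim that is false: it is not true that in a single Berge-$P_m$-saturated linear tree $T$ every vertex is the start of a Berge path of length $\ge m-2$. That claim is derived in the paper only in Case 1 of Theorem \ref{main theorem}, where $T$ is one component of a disconnected hypergraph and one can add a new edge from any $v\in T$ over to an isolated edge in another component; it does not transfer to the connected setting that defines $a^{(k)}_m$. Indeed the paper's own extremal tree $T^{(3)}_m$ is a counterexample: its central vertex $v$ (the degree-$1$ vertex of the central edge) admits no Berge path longer than roughly $\lceil(m-3)/2\rceil+1$, yet $T^{(3)}_m$ is saturated. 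Consequently the recursion you propose for $f(d)$ (``minimum edges in a rooted subtree such that every vertex admits a Berge path of length $\ge d$ avoiding the root direction'') encodes the wrong invariant and would overcount.

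The correct entry point in the paper is much weaker and more local: Lemma~\ref{twoedge} shows that for any pair of \emph{adjacent edges} $e_1,e_2$ of a saturated linear tree, adding an edge $e\subseteq e_1\cup e_2$ forces a Berge-$P_m$, and since such a $P_m$ can use at most $e,e_1,e_2$ plus two vertex-disjoint outward paths, those two paths must have lengths $\alpha,\beta$ with $\alpha+\beta\ge m-4$. From this, together with Lemma~\ref{edge in m-3}, the paper derives the \emph{branching lemmas} (a third long path must branch off every few edges along any Berge-$P_{m-1}$, with the branching period $2$ for $k=3$ and $k\ge 6$ and period $3$ for $k=4$), and then the \emph{counting lemmas} that turn this into a doubling recurrence on the number of edges hanging off a path of length $\beta$ from the center. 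You correctly guess the eventual shape (a tuned binary recurrence with period depending on $k$, plus a case split on whether a Berge-$P_{m-1}$ exists at all), and your remark about why $k=5$ is a degenerate boundary is on target; but without replacing your global ``long path from every vertex'' premise by the local two-adjacent-edges lemma, the induction would not get off the ground.
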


\begin{proof}
	Theorems \ref{k=3 P_{m-1} final count theorem}, \ref{k=4 P_{m-1} final count theorem} and \ref{k>=6 P_{m-1} final count theorem} give us the lower bounds on $a^{(k)}_m$, while Lemma \ref{tree saturation lemma} and observations \ref{k=3 tree construction edge count observation}, \ref{k=4 tree construction edge count observation} and \ref{k>=6 tree construction edge count observation} give us the matching upper bounds.
\end{proof}

Theorems \ref{main theorem} and \ref{a^k_m theorem} give us a lower bound on $\sat_k(n,\text{Berge-}P_m)$, while Construction \ref{H construction}, Observation \ref{H edge count observation} and Lemma \ref{H saturation lemma} give us an upper bound. These bounds are summarized in the following theorem.

\begin{theorem}\label{final bounds}
	Let $k\geq 3$ with $k\neq 5$. Let $m\geq 10$, and $n\geq (k-1)a^{(k)}_m+k-1$. Then
	\[
	\frac{1}{k-1}\left(n-\left\lfloor\frac{n-k+2}{(k-1)a^{(k)}_m+1}\right\rfloor-k+2\right)\leq \sat_k(n,\text{Berge-}P_m)\leq\left\lceil\frac{1}{k-1}\left(n-\left\lfloor\frac{n}{(k-1)a^{(k)}_m+1}\right\rfloor\right)\right\rceil.
	\]
\end{theorem}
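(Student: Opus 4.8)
The plan is to assemble Theorem \ref{final bounds} directly from the pieces the excerpt has already advertised, treating the lower and upper bounds separately. The lower bound is immediate: Theorem \ref{main theorem} gives $\sat_k(n,\text{Berge-}P_m)\geq \frac{1}{k-1}\left(n-\left\lfloor\frac{n-k+2}{(k-1)a^{(k)}_m+1}\right\rfloor-k+2\right)$ for every $k\geq 3$, $m\geq 10$, and $n$ in the stated range, with no dependence on $k\neq 5$. So nothing further is needed there beyond invoking that theorem; Theorem \ref{a^k_m theorem} is only relevant if one wants to substitute the explicit formula for $a^{(k)}_m$, which is optional for this statement.

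For the upper bound, I would build an explicit $k$-uniform Berge-$P_m$-saturated hypergraph on $n$ vertices. The natural candidate, which is what Construction \ref{H construction} presumably does, is to take a disjoint-ish union of copies of an extremal Berge-$P_m$-saturated linear tree $T$ achieving $a^{(k)}_m$ edges, glued or arranged so that the whole hypergraph is still Berge-$P_m$-saturated (the gluing is needed because a disjoint union of saturated pieces need not be saturated — adding an edge between two components could fail to create a Berge-$P_m$, so the components must be linked with minimal extra edges, exactly as in the K\'aszonyi–Tuza path construction and as foreshadowed by the remark that padding the K\'aszonyi–Tuza tree gives a saturated but non-minimal hypergraph). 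If $T$ has $b=(k-1)a^{(k)}_m+1$ vertices, then on $n$ vertices one fits roughly $\lfloor n/b\rfloor$ "tree pieces" plus a connecting structure; counting edges, each piece contributes $a^{(k)}_m=\frac{b-1}{k-1}$ edges, the leftover vertices contribute about $\frac{1}{k-1}$ edge per vertex, and one subtracts one edge per completed piece (since a tree on $b$ vertices uses $\frac{b-1}{k-1}$ rather than $\frac{b}{k-1}$ edges), which is exactly what produces the term $n-\lfloor n/((k-1)a^{(k)}_m+1)\rfloor$ inside the outer $\frac{1}{k-1}\lceil\cdot\rceil$. I would cite Observation \ref{H edge count observation} for this edge count and Lemma \ref{H saturation lemma} for the verification that the construction is indeed Berge-$P_m$-free and that adding any non-edge creates a Berge-$P_m$.

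Concretely, the write-up is short: state that the lower bound is Theorem \ref{main theorem}; for the upper bound, exhibit $H$ from Construction \ref{H construction}, note $|V(H)|=n$, apply Observation \ref{H edge count observation} to get $|E(H)|=\left\lceil\frac{1}{k-1}\left(n-\left\lfloor\frac{n}{(k-1)a^{(k)}_m+1}\right\rfloor\right)\right\rceil$, apply Lemma \ref{H saturation lemma} to conclude $H$ is Berge-$P_m$-saturated, hence $\sat_k(n,\text{Berge-}P_m)\leq |E(H)|$, and combine. The condition $n\geq (k-1)a^{(k)}_m+k-1$ guarantees there is room for at least one full tree piece plus the $k-1$ vertices needed to seed the construction, so the formulas are meaningful.

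I expect the genuine obstacle — already discharged in Sections \ref{section constructions} and \ref{section lower bound} and not re-proved here — to be twofold: first, identifying the extremal saturated linear tree, i.e. proving the exact value of $a^{(k)}_m$ in Theorem \ref{a^k_m theorem} (the matching lower bound from Theorems \ref{k=3 P_{m-1} final count theorem}, \ref{k=4 P_{m-1} final count theorem}, \ref{k>=6 P_{m-1} final count theorem} is where the $k=5$ case breaks down, explaining the hypothesis $k\neq 5$); and second, the saturation verification for the glued construction $H$, since one must check that every non-edge $e$ — whether inside a tree piece, spanning two pieces, or touching leftover vertices — completes a Berge-$P_m$, which requires the path-length bookkeeping (a vertex in a saturated tree has a Berge-path of length $\geq m-2$ leaving it, so two such vertices joined by a new edge yield length $\geq m-1$) that appears already in Case 1 of the proof of Theorem \ref{main theorem}. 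Since all of this is isolated into the cited lemmas and observations, the proof of Theorem \ref{final bounds} itself is simply the act of quoting them in the right order, and the only thing to be careful about is matching the floor/ceiling expressions exactly as stated and confirming the gap between the two bounds is at most three, as remarked after the theorem.
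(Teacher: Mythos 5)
Your proposal matches the paper's own proof of Theorem \ref{final bounds}: the lower bound is quoted directly from Theorem \ref{main theorem}, and the upper bound is obtained by exhibiting $H^{(k)}_{n,m}$ from Construction \ref{H construction}, with Observation \ref{H edge count observation} supplying the edge count and Lemma \ref{H saturation lemma} supplying saturation. One minor correction to your mental picture of the construction: no gluing between tree pieces is needed --- the hypergraph is a disjoint union of copies of $T^{(k)}_m$ (one possibly enlarged to absorb leftover vertices), and Lemma \ref{H saturation lemma} already shows that any edge straddling two components creates a Berge-$P_m$, since every vertex of $T^{(k)}_m$ starts a path of length at least $\CL{(m-2)/2}$.
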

It is worth noting that the upper and lower bounds provided here differ by at most three. 
In the proof of Theorem \ref{main theorem}, we allow for the possibility that a minimal, saturated graph contains up to $k-2$ isolated vertices, but this may not be possible. If it can be shown that there are no isolated vertices in the minimal construction, the upper and lower bounds will match. This leads to the following conjecture:

\begin{conjecture}
	Let $k\geq 3$, $m\geq 10$, $n\geq (k-1)a^{(k)}_m+1$. Then
	\[
	\sat_k(n,\text{Berge-}P_m)=\left\lceil\frac{1}{k-1}\left(n-\left\lfloor\frac{n}{(k-1)a^{(k)}_m+1}\right\rfloor\right)\right\rceil.
	\]
\end{conjecture}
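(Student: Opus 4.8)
The plan is to combine the two ingredients already flagged in the excerpt: the lower bound is exactly Theorem~\ref{main theorem} together with the evaluation of $a^{(k)}_m$ in Theorem~\ref{a^k_m theorem}, so nothing new is needed there. The entire content of Theorem~\ref{final bounds} beyond what is already proved is therefore the upper bound, and that upper bound is to be obtained by exhibiting an explicit Berge-$P_m$-saturated $k$-uniform hypergraph on $n$ vertices with the stated number of edges. Concretely, I would take the extremal linear tree $T$ realizing $a^{(k)}_m$ (which exists by definition of $a^{(k)}_m$ and whose saturation is certified by Lemma~\ref{tree saturation lemma}), note it has $b := (k-1)a^{(k)}_m + 1$ vertices, and build $H$ (this is \emph{Construction~\ref{H construction}}) as a disjoint-ish union of roughly $\lfloor n/b\rfloor$ copies of $T$, glued along shared vertices or padded with a few extra edges so that the total vertex count is exactly $n$ while no Berge-$P_m$ is created. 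The edge count of this $H$ is computed in Observation~\ref{H edge count observation}: each block of $b$ vertices contributes $a^{(k)}_m$ edges plus the cost of linking blocks, which after simplification gives $\lceil \frac{1}{k-1}(n - \lfloor n/b\rfloor)\rceil$.

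First I would state Construction~\ref{H construction} precisely, being careful about how the $\lfloor n/b\rfloor$ tree-copies are connected: the key design constraint is that the global structure must still be Berge-$P_m$-free, so the copies of $T$ should be joined in a way that does not let a long Berge path thread through two copies — e.g. by identifying a single non-central vertex of one copy with a single vertex of the next (a "caterpillar of trees" arrangement), or by hanging them off a common small core. Second, I would verify Berge-$P_m$-freeness of $H$: this uses the fact that each $T$ is Berge-$P_m$-free together with the observation from the proof of Theorem~\ref{main theorem} that a longest Berge path starting at any vertex of $T$ has length $< m-2$ on one side, so even when two copies meet at a cut vertex the concatenated path is too short; the cases where copies share more structure need the analogous length bookkeeping. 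Third, I would verify saturation: for every non-edge $e \in E(\overline H)$, adding $e$ creates a Berge-$P_m$. Here I would split on whether $e$ lies inside a single copy of $T$ (covered directly by the saturation of $T$ from Lemma~\ref{tree saturation lemma}, possibly after checking that the "extra" ambient vertices do not interfere), or $e$ meets two or more copies (in which case $e$ links two long Berge paths — one of length $\geq$ roughly $m-2$ from the deep side of one copy, one from the deep side of another — through the new hyperedge $e$ of size $k \geq 3$, and these combine to length $\geq m-1$). Finally I would do the arithmetic of Observation~\ref{H edge count observation} to land on the stated closed form, and assemble lower and upper bounds into the theorem statement, remarking (as the text already does) that the $\lfloor \cdot \rfloor$ terms $\lfloor \frac{n-k+2}{b}\rfloor$ versus $\lfloor \frac n b\rfloor$ and the ceiling differ by at most three.

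The main obstacle I anticipate is the saturation check for a non-edge $e$ that straddles the junctions between copies of $T$, especially when $e$ also absorbs some of the padding vertices used to make the count come out to exactly $n$: one has to produce an honest Berge-$P_m$ using $e$, and this requires knowing not just that each $T$ is saturated but that each $T$ has, emanating from essentially every vertex, a Berge path of controlled length that can be rerouted through $e$. This is precisely the kind of refined structural information about the extremal trees that should be extracted while proving Theorem~\ref{a^k_m theorem} / Lemma~\ref{tree saturation lemma}, so the cleanest route is to prove a slightly stronger "rooted" saturation property for $T$ (for every vertex $v$, either $T$ has a near-spanning Berge path with $v$ as an endpoint, or adding any edge at $v$ creates a Berge-$P_m$) and then feed that into the junction analysis. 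A secondary, purely bookkeeping obstacle is handling the remainder $n \bmod b$: the last, incomplete block must itself be Berge-$P_m$-saturated and Berge-$P_m$-free with the minimum number of edges, which may force a small ad hoc sub-construction and is the source of the "at most three" slack between the two bounds.
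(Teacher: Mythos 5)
Your proposal does not actually prove the statement you were asked to prove. The statement is the paper's \emph{Conjecture}, which asserts exact equality
\[
\sat_k(n,\text{Berge-}P_m)=\left\lceil\frac{1}{k-1}\left(n-\left\lfloor\frac{n}{(k-1)a^{(k)}_m+1}\right\rfloor\right)\right\rceil,
\]
whereas your plan reproduces only the two halves of Theorem~\ref{final bounds}, which the paper already proves and which leave a gap of up to three. You write that for the lower bound ``nothing new is needed'' beyond Theorem~\ref{main theorem} and Theorem~\ref{a^k_m theorem}, and you close by ``assembling lower and upper bounds'' and \emph{remarking} that they differ by at most three. But the Conjecture is precisely the assertion that this gap does \emph{not} exist. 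Theorem~\ref{main theorem} only gives
\[
\sat_k(n,\text{Berge-}P_m)\geq \frac{1}{k-1}\left(n-\left\lfloor\frac{n-k+2}{(k-1)a^{(k)}_m+1}\right\rfloor-k+2\right),
\]
which is strictly smaller than the conjectured value, and the paper explicitly explains why: Case 3 of that proof allows a minimal saturated hypergraph to carry up to $k-2$ isolated vertices, each of which reduces the bound. To prove the Conjecture one must rule out those isolated vertices (or replace the tree-component counting in Theorem~\ref{main theorem} with a sharper argument). Your proposal contains no idea for doing this, so the central step is missing; what you have is an outline of the proof of Theorem~\ref{final bounds}, not of the Conjecture.

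Two secondary points. First, your description of the upper-bound construction as ``gluing along shared vertices'' or a ``caterpillar of trees'' is not what Construction~\ref{H construction} does: the paper takes vertex-disjoint copies of $T^{(k)}_m$ plus isolated vertices, then absorbs the remainder by adding pendant leaves at the degree-$2$ vertex of an existing leaf and, if needed, one extra edge $e'$ parallel to a central edge $e^*$; no two copies of $T^{(k)}_m$ share a vertex. Gluing copies at a cut vertex would in general threaten Berge-$P_m$-freeness and is harder to certify. Second, your worry about non-edges straddling components is handled in the paper by a much cruder (and cleaner) fact than the ``rooted saturation'' property you propose: every vertex of $T^{(k)}_m$ lies on a Berge-$P_{m-1}$, hence is the endpoint of a Berge path on at least $\lceil (m-2)/2\rceil$ edges; any inter-component non-edge $e$ then links two such half-paths into a path on at least $2\lceil (m-2)/2\rceil+1\geq m-1$ edges. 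You do not need per-vertex rerouting information for that.
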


\section{Constructing an Upper Bound on $\mathbf{a^{(k)}_m}$}\label{section constructions}
In this section, we give a construction for a Berge-$P_m$ saturated linear tree, $T^{(k)}_m$, for $k=3,4$ and $k\geq 6$ and for all $m\geq 10$, and we show how to use $T^{(k)}_m$ to create a saturated graph on $n$ vertices for any $n\geq (k-1)a^{(k)}_m+1$.
In Section~\ref{section lower bound} we will show that each $T^{(k)}_m$ is minimal, or more precisely, $\left|E\left(T^{\left(k\right)}_m\right)\right|=a^{(k)}_m$.

\subsection{Constructing $\mathbf{T^{(k)}_m}$}
We begin determining the upper bound for $\sat_3(n, \text{Berge-}P_m)$ by constructing 3-uniform linear trees that are Berge-$P_m$ saturated for $m\ge 8$. 

\begin{construction}\label{con:k=3}
For $m\geq8$, let $T^{(3)}_m$ be the linear tree built as follows:
\begin{enumerate}
\item Start with a central vertex $v$ in an edge, $e$. 
\item Add a pendant edge to both vertices in $e-\{v\}$ to begin the two main branches of the tree. Call these the left and right sides of the tree.
\item To each vertex added in the previous step, append a pendant edge. 
\item Add a pendant edge to one degree 1 vertex in each of the edges added in step 3. 
\item If $m$ is odd, repeat steps 3 and 4 until there are $\frac{m-3}{2}$ levels of the linear tree after the initial edge $e$. If $m$ is even, repeat steps 3 and 4 until there are $\frac{m-4}{2}$ levels after the initial edge $e$, and then add an additional level to the left side of the tree.
\end{enumerate}
\end{construction}

Figures \ref{m8pathconstruction} and \ref{m9pathconstruction} show the constructions for $m=8$ and $m=11$ respectively.

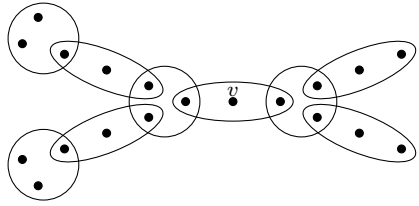
\begin{figure}[ht]
\begin{center}
\begin{tikzpicture}[line cap=round,line join=round,>=triangle 45,x=.7cm,y=.7cm]
	\clip(-6.1,1.5) rectangle (6.1,5.5);
	\draw [rotate around={0.:(0,3.6)}] (0,3.6) ellipse (.8cm and 0.26cm);
	\draw(-1.3,3.6) circle (0.47cm);
	\draw(1.3,3.6) circle (0.47cm);
	\draw [rotate around={-21.8:(-2.4,4.2)}] (-2.4,4.2) ellipse (.8cm and 0.26cm);
	\draw [rotate around={21.8:(-2.4,3.)}] (-2.4,3.) ellipse (.8cm and 0.26cm);
	\draw [rotate around={21.8:(2.4,4.2)}] (2.4,4.2) ellipse (.8cm and 0.26cm);
	\draw [rotate around={-21.8:(2.4,3.)}] (2.4,3.) ellipse (.8cm and 0.26cm);
	\draw(-3.6,4.8) circle (0.47cm);
	\draw(-3.6,2.4) circle (0.47cm);
	\begin{scriptsize}
	\draw [fill=black] (-1.6,3.9) circle (1.5pt);
	\draw [fill=black] (-1.6,3.3) circle (1.5pt);
	\draw [fill=black] (1.6,3.9) circle (1.5pt);
	\draw [fill=black] (1.6,3.3) circle (1.5pt);
	\draw [fill=black] (-3.2,4.5) circle (1.5pt);
	\draw [fill=black] (-3.2,2.7) circle (1.5pt);
	\draw [fill=black] (3.2,4.5) circle (1.5pt);
	\draw [fill=black] (3.2,2.7) circle (1.5pt);
	\draw [fill=black] (0.,3.6) circle (1.5pt);
	\draw[color=black] (0,3.8) node {$v$};
	\draw [fill=black] (-0.9,3.6) circle (1.5pt);
	\draw [fill=black] (0.9,3.6) circle (1.5pt);
	\draw [fill=black] (2.4,4.2) circle (1.5pt);
	\draw [fill=black] (2.4,3.) circle (1.5pt);
	\draw [fill=black] (-2.4,4.2) circle (1.5pt);
	\draw [fill=black] (-2.4,3.) circle (1.5pt);
	\draw [fill=black] (-4,2.5) circle (1.5pt);
	\draw [fill=black] (-3.7,2) circle (1.5pt);
	\draw [fill=black] (-4.,4.7) circle (1.5pt);
	\draw [fill=black] (-3.7,5.2) circle (1.5pt);
	\end{scriptsize}
\end{tikzpicture}
\caption{A copy of $T^{(3)}_8$, as described in Construction~\ref{con:k=3}.}
\label{m8pathconstruction}
\end{center}
\end{figure}

\begin{figure}[ht]
\begin{center}
\begin{tikzpicture}[line cap=round,line join=round,>=triangle 45,x=.7cm,y=.7cm]
	\clip(-6.1,0.5) rectangle (6.1,6.5);
	\draw [rotate around={0.:(0,3.6)}] (0,3.6) ellipse (.8cm and 0.26cm);
	\draw(-1.3,3.6) circle (0.47cm);
	\draw(1.3,3.6) circle (0.47cm);
	\draw [rotate around={-21.8:(-2.4,4.2)}] (-2.4,4.2) ellipse (.8cm and 0.26cm);
	\draw [rotate around={21.8:(-2.4,3.)}] (-2.4,3.) ellipse (.8cm and 0.26cm);
	\draw [rotate around={21.8:(2.4,4.2)}] (2.4,4.2) ellipse (.8cm and 0.26cm);
	\draw [rotate around={-21.8:(2.4,3.)}] (2.4,3.) ellipse (.8cm and 0.26cm);
	\draw(-3.6,4.8) circle (0.47cm);
	\draw(-3.6,2.4) circle (0.47cm);
	\draw(3.6,2.4) circle (0.47cm);
	\draw(3.6,4.8) circle (0.47cm);
	\draw [rotate around={-5:(-4.9,2.6)}] (-4.9,2.6) ellipse (.8cm and 0.26cm);
	\draw [rotate around={21.8:(-4.5,1.65)}] (-4.5,1.65) ellipse (.8cm and 0.26cm);
	\draw [rotate around={5:(4.9,2.6)}] (4.9,2.6) ellipse (.8cm and 0.26cm);
	\draw [rotate around={-21.8:(4.5,1.65)}] (4.5,1.65) ellipse (.8cm and 0.26cm);
	\draw [rotate around={5:(-4.9,4.6)}] (-4.9,4.6) ellipse (.8cm and 0.26cm);
	\draw [rotate around={-21.8:(-4.5,5.55)}] (-4.5,5.55) ellipse (.8cm and 0.26cm);
	\draw [rotate around={-5:(4.9,4.6)}] (4.9,4.6) ellipse (.8cm and 0.26cm);
	\draw [rotate around={21.8:(4.5,5.55)}] (4.5,5.55) ellipse (.8cm and 0.26cm);
	\begin{scriptsize}
	\draw [fill=black] (-1.6,3.9) circle (1.5pt);
	\draw [fill=black] (-1.6,3.3) circle (1.5pt);
	\draw [fill=black] (1.6,3.9) circle (1.5pt);
	\draw [fill=black] (1.6,3.3) circle (1.5pt);
	\draw [fill=black] (-3.2,4.5) circle (1.5pt);
	\draw [fill=black] (-3.2,2.7) circle (1.5pt);
	\draw [fill=black] (3.2,4.5) circle (1.5pt);
	\draw [fill=black] (3.2,2.7) circle (1.5pt);
	\draw [fill=black] (0.,3.6) circle (1.5pt);
	\draw[color=black] (0,3.8) node {$v$};
	\draw [fill=black] (-0.9,3.6) circle (1.5pt);
	\draw [fill=black] (0.9,3.6) circle (1.5pt);
	\draw [fill=black] (2.4,4.2) circle (1.5pt);
	\draw [fill=black] (2.4,3.) circle (1.5pt);
	\draw [fill=black] (-2.4,4.2) circle (1.5pt);
	\draw [fill=black] (-2.4,3.) circle (1.5pt);
	
	\draw [fill=black] (-4,2.5) circle (1.5pt);
	\draw [fill=black] (-3.7,2) circle (1.5pt);
	\draw [fill=black] (-4.85,2.6) circle (1.5pt);
	\draw [fill=black] (-5.6,2.7) circle (1.5pt);
	\draw [fill=black] (-4.45,1.65) circle (1.5pt);
	\draw [fill=black] (-5,1.45) circle (1.5pt);
	
	\draw [fill=black] (4.,2.5) circle (1.5pt);
	\draw [fill=black] (3.7,2.) circle (1.5pt);
	\draw [fill=black] (4.85,2.6) circle (1.5pt);
	\draw [fill=black] (5.6,2.7) circle (1.5pt);
	\draw [fill=black] (4.45,1.65) circle (1.5pt);
	\draw [fill=black] (5,1.45) circle (1.5pt);
	
	\draw [fill=black] (4.,4.7) circle (1.5pt);
	\draw [fill=black] (3.7,5.2) circle (1.5pt);
	\draw [fill=black] (4.85,4.6) circle (1.5pt);
	\draw [fill=black] (5.6,4.5) circle (1.5pt);
	\draw [fill=black] (4.45,5.55) circle (1.5pt);
	\draw [fill=black] (5,5.75) circle (1.5pt);
	
	\draw [fill=black] (-4.,4.7) circle (1.5pt);
	\draw [fill=black] (-3.7,5.2) circle (1.5pt);
	\draw [fill=black] (-4.85,4.6) circle (1.5pt);
	\draw [fill=black] (-5.6,4.5) circle (1.5pt);
	\draw [fill=black] (-4.45,5.55) circle (1.5pt);
	\draw [fill=black] (-5,5.75) circle (1.5pt);

	\end{scriptsize}
\end{tikzpicture}
\caption{A copy of $T^{(3)}_{11}$, as described in Construction~\ref{con:k=3}.}
\label{m9pathconstruction}
\end{center}
\end{figure}
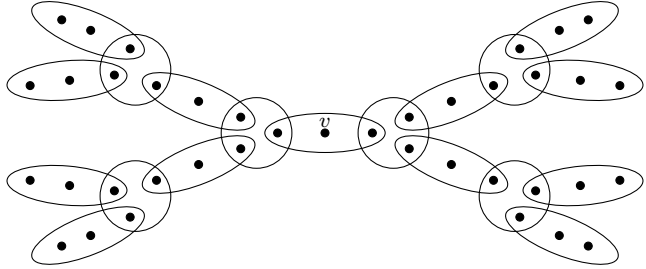

\begin{observation}\label{k=3 tree construction edge count observation}
	If $m\geq 8$, $1\leq r\leq 4$ and $m=4s+r$, then
	\[
\left|E\left(T^{(3)}_m\right)\right|=(3+r)2^{s}-5.
    \]
\end{observation}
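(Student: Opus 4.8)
\textbf{Proof proposal for Observation \ref{k=3 tree construction edge count observation}.}

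The plan is to count the edges of $T^{(3)}_m$ level by level, tracking how many new edges are added at each of the steps in Construction~\ref{con:k=3}. First I would record the edge counts for the two base cases: after steps 1 and 2 we have $1$ edge (the central edge $e$) plus $2$ pendant edges, for $3$ edges total on what I will call level $1$; this is the configuration at $m=8$ after the early steps, and one checks directly (using Figure~\ref{m8pathconstruction}) that $T^{(3)}_8$ has the claimed value $(3+4)2^1-5=9$ edges, and similarly $T^{(3)}_9, T^{(3)}_{10}, T^{(3)}_{11}$ serve as the base cases for the four residue classes $r=1,2,3,4$.

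Next I would set up the recursion. The key structural observation is that one application of steps 3 and 4 — one ``level'' — takes each current leaf edge and, from the two degree-$1$ vertices in it, appends a pendant edge to each (step 3), then appends one further pendant edge to one degree-$1$ vertex in each of those new edges (step 4). So each leaf edge at the bottom level spawns $2$ edges in step 3 and $2$ more in step 4, i.e. $4$ new edges, and the number of leaf edges doubles. Letting $L_j$ denote the number of leaf edges after $j$ full levels, we get $L_{j+1}=2L_j$ with $L_1 = 2$, so $L_j = 2^j$, and the number of edges added when passing from level $j$ to level $j+1$ is $4L_j = 2^{j+2}$. Summing, after $t$ levels past $e$ the tree has $1 + 2 + \sum_{j=1}^{t-1} 2^{j+2} = 3 + 2^{t+2} - 8 = 2^{t+2} - 5$ edges. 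Then I would plug in the number of levels prescribed in step 5: for $m$ odd, $t = \frac{m-3}{2}$ levels; for $m$ even, $t = \frac{m-4}{2}$ levels plus one extra ``half-level'' on the left side only, which by the same count contributes $4 L_{t}/2 = 2^{t+1}$ additional edges. Finally I would translate $t$ into the parameter $s$ via $m = 4s+r$: for instance $m = 4s+1$ gives $t = \frac{m-3}{2} = 2s-1$ and edge count $2^{2s+1}-5$; checking this equals $(3+1)2^s - 5 = 2^{s+2}-5$ forces a re-examination — which tells me the ``levels'' in step 5 are counted in the paired sense (each repetition of steps 3 \emph{and} 4 is one level, but the formula indexing may differ by a factor), so I would recompute carefully matching against the two worked figures to pin down the exact indexing, then verify the four closed forms $(3+r)2^s-5$ for $r=1,2,3,4$ agree with the recursion output in each residue class.

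The main obstacle I anticipate is purely bookkeeping: correctly reconciling the informal ``repeat steps 3 and 4 until there are $N$ levels'' language with an exact edge count, especially the asymmetric extra level added on the left side in the even case, and making sure the base of the geometric sum lines up so that the final expression comes out as $(3+r)2^s - 5$ rather than off by a constant or a factor of $2$. There is no real difficulty of mathematical content — it is a finite tree with a self-similar doubling structure — so once the indexing is fixed by cross-checking against $T^{(3)}_8$ through $T^{(3)}_{11}$, the four cases follow by the same computation with $r \in \{1,2,3,4\}$ substituted.
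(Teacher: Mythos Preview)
The paper states this as an observation without proof, so there is nothing to compare against; your level-by-level edge count is exactly the natural way to verify the formula, and the plan to pin down the indexing against the figures for $T^{(3)}_8$ and $T^{(3)}_{11}$ is sound.

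The one concrete slip worth flagging is your initial reading of a ``level'': you treat one pass through steps~3 \emph{and}~4 together as a single level, but in Construction~\ref{con:k=3} each of steps~3 and~4 separately adds one level. Consequently the edge counts per level follow the pattern $2,\,4,\,4,\,8,\,8,\,16,\,16,\dots$ (step~2 contributes $2$; the $j$th pass through step~3 contributes $2^{j+1}$; the $j$th pass through step~4 contributes $2^{j+1}$ as well). Summing the first $t$ terms of $2,4,4,8,8,\dots$ plus the central edge and, in the even case, half of the next term for the extra left-side level gives $(3+r)2^s-5$ directly. You already noticed that your first computation was off and planned to recalibrate against the figures; once you correct the meaning of ``level'' as above, the four cases $r\in\{1,2,3,4\}$ go through with no further obstacles.
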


Next, we provide the tree construction for uniformity $k=4$
\begin{construction}\label{con:k=4}
For $m\ge 10$, let $T^{(4)}_m$ be the linear tree built as follows:
\begin{enumerate}
\item Start with a central vertex, $v$ and two edges that intersect only at $v$.
\item Add a pendent edge to one degree 1 vertex in each of the edges added in the previous step to begin the two main branches of the tree. 
Call these the left and right sides of the tree. 
\item Add a pendant edge to each of two degree 1 vertices in each of the edges added in the previous step. 
\item Add a pendant edge to one degree 1 vertex in each of the edges added in the previous step.
\item Repeat steps 2,3,4 until the tree has $3\FL{\frac{m}{6}}-1$ levels after the initial vertex. 

For $m=0 \mod{6}$, we do not need any additional steps.

For $m=1 \mod{6}$, repeat step 3 on the left side of the tree.

For $m=2 \mod{6}$, repeat step 3.

For $m=3 \mod{6}$, repeat step 3, and repeat step 4 on the left side of the tree.

For $m=4 \mod{6}$, repeat steps 3 and 4.

For $m=5 \mod{6}$, repeat steps 3 and 4, and step 4 again on the left side of the tree.
\end{enumerate}
\end{construction}

See Figure \ref{fig:k4construction} for an example of the construction for $m=18$.

\begin{figure}
\begin{center}
\begin{tikzpicture}[scale=1]
\clip(-3.48,-2.5) rectangle (4.4,1.8);
\draw (-.2,0)circle[x radius =0.316cm,y radius=0.1cm,rotate=0];
\draw (.2,0) circle [x radius=.316cm, y radius=.1cm, rotate=0];

\draw (0.6,0.) circle (0.2cm);

\draw (.943,.275) circle [x radius=.317cm, y radius=.1406cm, rotate=38.6598];
\draw (1.25446,0.523614) circle [x radius=.317cm, y radius=.1406cm, rotate=38.6598];
\draw (1.5627,0.77) circle (0.20124cm);
\draw (1.8518,1.05105) circle [x radius=.317cm, y radius=.1406cm, rotate=29.6148];
\draw (2.197,1.247) circle [x radius=.317cm, y radius=.1406cm, rotate=29.6148];
\draw (2.54599,1.44564) circle [x radius=.317cm, y radius=.1406cm, rotate=29.6148];
\draw (2.006,0.75369) circle [x radius=.317cm, y radius=.1406cm, rotate=-2.10165];
\draw (2.4086,0.7389) circle [x radius=.317cm, y radius=.1406cm, rotate=-2.10165];
\draw (2.81307,0.72407) circle [x radius=.317cm, y radius=.1406cm, rotate=-2.10165];

\draw (0.943198,-0.2746) circle [x radius=.317cm, y radius=.1406cm, rotate=-38.6598];
\draw (1.25446,-0.523614) circle [x radius=.317cm, y radius=.1406cm, rotate=-38.6598];
\draw (1.5627,-0.77) circle (0.20124cm);
\draw (1.8518,-1.05105) circle [x radius=.317cm, y radius=.1406cm, rotate=-29.6148];
\draw (2.197,-1.247) circle [x radius=.317cm, y radius=.1406cm, rotate=-29.6148];
\draw (2.54599,-1.44564) circle [x radius=.317cm, y radius=.1406cm, rotate=-29.6148];
\draw (2.006,-0.75369) circle [x radius=.317cm, y radius=.1406cm, rotate=2.10165];
\draw (2.4086,-0.7389) circle [x radius=.317cm, y radius=.1406cm, rotate=2.10165];
\draw (2.81307,-0.72407) circle [x radius=.317cm, y radius=.1406cm, rotate=2.10165];

\draw (-0.6,0.) circle (0.2cm);

\draw (-0.943198,0.2746) circle [x radius=.317cm, y radius=.1406cm, rotate=-38.6598];
\draw (-1.25446,0.523614) circle [x radius=.317cm, y radius=.1406cm, rotate=-38.6598];
\draw (-1.5627,0.77) circle (0.20124cm);
\draw (-1.8518,1.05105) circle [x radius=.317cm, y radius=.1406cm, rotate=-29.6148];
\draw (-2.197,1.247) circle [x radius=.317cm, y radius=.1406cm, rotate=-29.6148];
\draw (-2.54599,1.44564) circle [x radius=.317cm, y radius=.1406cm, rotate=-29.6148];
\draw (-2.006,0.75369) circle [x radius=.317cm, y radius=.1406cm, rotate=2.10165];
\draw (-2.4086,0.7389) circle [x radius=.317cm, y radius=.1406cm, rotate=2.10165];
\draw (-2.81307,0.72407) circle [x radius=.317cm, y radius=.1406cm, rotate=2.10165];

\draw (-0.943198,-0.2746) circle [x radius=.317cm, y radius=.1406cm, rotate=38.6598];
\draw (-1.25446,-0.523614) circle [x radius=.317cm, y radius=.1406cm, rotate=38.6598];
\draw (-1.5627,-0.77) circle (0.20124cm);
\draw (-1.8518,-1.05105) circle [x radius=.317cm, y radius=.1406cm, rotate=29.6148];
\draw (-2.197,-1.247) circle [x radius=.317cm, y radius=.1406cm, rotate=29.6148];
\draw (-2.54599,-1.44564) circle [x radius=.317cm, y radius=.1406cm, rotate=29.6148];
\draw (-2.006,-0.75369) circle [x radius=.317cm, y radius=.1406cm, rotate=-2.10165];
\draw (-2.4086,-0.7389) circle [x radius=.317cm, y radius=.1406cm, rotate=-2.10165];
\draw (-2.81307,-0.72407) circle [x radius=.317cm, y radius=.1406cm, rotate=-2.10165];

\end{tikzpicture}
\caption{A copy of $T^{(4)}_{18}$, as described in Construction~\ref{con:k=4}.}\label{fig:k4construction}
\end{center}
\end{figure}
\begin{observation}\label{k=4 tree construction edge count observation}
For $m\geq 10$, let $0\leq r\leq 5$ such that $m=6s+r$. Then 
\[
\left|E\left(T^{(4)}_m\right)\right|=(6+r)2^s-8.
\]
\end{observation}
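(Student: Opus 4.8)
The plan is a direct edge count of the linear tree $T^{(4)}_m$ produced by Construction~\ref{con:k=4}, organized by the level at which each edge is introduced. Write $m = 6s + r$ with $0 \le r \le 5$; since $m \ge 10$ we have $s \ge 1$ and $\lfloor m/6 \rfloor = s$. The key point is that the edges split into a \emph{skeleton}---step~1 together with the repeated applications of steps 2, 3, 4 carried out until level $3s - 1$---which is built identically for every residue $r$, plus a bounded collection of \emph{extra} edges coming from the residue-dependent instructions at the end of the construction. So I would first count the skeleton (this already handles $r = 0$) and then, for each $r \in \{1, \dots, 5\}$, count the extra edges.

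For the skeleton, I would record how the number of edges introduced at a level depends on the step that created it: steps 2 and 4 attach one pendant edge to each edge of the previous level, so they leave the level-size unchanged, while step 3 attaches two pendant edges to each such edge and doubles the level-size. Starting from the two edges of level~$1$ (step~1), and noting that after step~1 the repeated block of steps 2, 3, 4 has multipliers $1, 2, 1$, the number of edges at level $i$ is $b_i = 2^{\lceil (i+1)/3\rceil}$ for $1 \le i \le 3s-1$, and the last level $3s-1$, which is an application of step~2, has $b_{3s-1} = 2^s$ edges. Summing the geometric progression,
\[
\sum_{i=1}^{3s-1} b_i = 4 + \sum_{j=1}^{s-1} 3 \cdot 2^{j+1} = 4 + 3\bigl(2^{s+1} - 4\bigr) = 6 \cdot 2^s - 8,
\]
which is the formula when $r = 0$.

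Next I would add the edges from the residue-dependent steps. Each of these is applied to (a side of) the final step-2 level, which has $2^s$ edges split evenly, $2^{s-1}$ to a side; a step~3 applied to all of these adds $2^{s+1}$ edges and to the left side only adds $2^s$, while a step~4 (or a further step~2) adds as many edges as the level it acts on, with the ``left side only'' variant adding half as many. Tracking the cases listed in the construction gives: $r = 1$ adds $2^s$; $r = 2$ adds $2^{s+1}$; $r = 3$ adds $2^{s+1} + 2^s$; $r = 4$ adds $2^{s+1} + 2^{s+1}$; $r = 5$ adds $2^{s+1} + 2^{s+1} + 2^s$. In every case the extra edge count simplifies to $r \cdot 2^s$, so $|E(T^{(4)}_m)| = 6 \cdot 2^s - 8 + r \cdot 2^s = (6+r)2^s - 8$.

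The computation itself is routine; the main obstacle is the bookkeeping---making sure the skeleton really stops at a step~2 whose size is $2^s$ (so the level indices line up across the rounds), and not forgetting the factor $\tfrac12$ every time an instruction says ``on the left side of the tree''. One should also check the degenerate case $s = 1$ (that is, $m \in \{10, 11\}$), where the skeleton reduces to the four edges of levels~$1$ and~$2$ with no completed round of steps 2, 3, 4; the formulas above still read correctly there.
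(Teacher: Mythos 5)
Your count is correct. The paper states this as an unproved observation (the edge count is meant to be read off from Construction~\ref{con:k=4} directly), so there is no proof in the paper to compare against; your level-by-level decomposition into a residue-independent skeleton of $6\cdot 2^s - 8$ edges plus $r\cdot 2^s$ residue-dependent edges is exactly the verification the construction invites, and your per-step multipliers ($\times 1$ for steps 2 and 4, $\times 2$ for step 3, halved for ``left side only'') and the boundary case $s=1$ all check out.
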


Finally, we present a construction for $T^{(k)}_m$ with $k\geq 6$.  The construction for when $m$ is divisible by $4$ is distinct from other divisibilities. We give the more general construction first.

\begin{construction}\label{con:k=k}
For $m\geq 8$ and $k\geq 6$, let $T^{(k)}_m$ be the $k$-uniform linear tree built as follows:

For $m=1,2,3 \mod 4$:
\begin{enumerate}
\item Start with a central vertex, $v$ and two edges that intersect only at $v$ that begin the two main branches of the tree. Call these the left and right sides.
\item Add two pendant edges to one degree 1 vertex in each of the edges added in the previous step. 
\item Add a pendant edge to one degree one vertex in each of the edges added in the previous step. 
\item For $m=1 \mod 4$, repeat steps 2 and 3 until there are $\frac{m-5}{2}+1$ levels after $v$, and repeat step 3 for one of the two main branches. \\For $m=2 \mod 4$, repeat steps 2 and 3 until there are $\frac{m-6}{2}+1$ levels after $v$ and then repeat step 3. \\For $m=3 \mod 4$, repeat steps 2 and 3 until there are $\frac{m-7}{2}+1$ levels after $v$, and repeat steps 2 and then 3 for one of the two main branches and just step 3 for the other.
\end{enumerate}

For $m=4s$:
\begin{enumerate}
\item Start with a central vertex, $v$ and attach three edges to create a star. These will begin the three main branches of the construction.
\item Add a pendant edge to one degree 1 vertex in each of the edges added in the previous step.
\item Add two pendant edges to one degree 1 vertex in each of the edges added in the previous step.
\item Repeat steps 2 and 3 until there are $\frac{m-4}{2}$ levels after the initial vertex and then repeat step 2. Note: For $m=8$, the construction does not use step 3. 
\end{enumerate}
\end{construction}

See Figures \ref{fig:T8} and \ref{fig:T11} for $T_8^{(k)}$ and $T_{13}^{(k)}$, respectively.

\begin{figure}[ht]
\begin{center}
\begin{tikzpicture}[line cap=round,line join=round,>=triangle 45,x=.25cm,y=.25cm]
\draw(-2.,0) circle [x radius=.5625cm, y radius=.25cm, rotate=0];
\draw((2.,0) circle [x radius=.5625cm, y radius=.25cm, rotate=0];
\draw(5.5,0) circle [x radius=.5625cm, y radius=.25cm, rotate=0];
\draw(-5.5,1.323) circle [x radius=.5625cm, y radius=.25cm, rotate=-41.4];
\draw(-8.5,3.969) circle [x radius=.5625cm, y radius=.25cm, rotate=-41.4];
\draw(-11.5,6.615) circle [x radius=.5625cm, y radius=.25cm, rotate=-41.4];
\draw(-5.5,-1.323) circle [x radius=.5625cm, y radius=.25cm, rotate=41.4];
\draw(-8.5,-3.969) circle [x radius=.5625cm, y radius=.25cm, rotate=41.4];
\draw(-11.5,-6.615) circle [x radius=.5625cm, y radius=.25cm, rotate=41.4];
\end{tikzpicture}
\caption{A copy of $T^{(k)}_8$, as described in Construction \ref{con:k=k}.}\label{fig:T8}
\end{center}
\end{figure}

\begin{figure}[ht]
\begin{center}
\begin{tikzpicture}[line cap=round,line join=round,>=triangle 45,x=.25cm,y=.25cm]
\draw(-1.5,0.) circle [x radius=.45cm, y radius=.125cm, rotate=0];
\draw(-4.1,0.85) circle [x radius=.45cm, y radius=.125cm, rotate=-32];
\draw(-6.5,2.36) circle [x radius=.45cm, y radius=.125cm, rotate=-32];
\draw(-9.0,2.6) circle [x radius=.45cm, y radius=.125cm, rotate=15.7];
\draw(-11.7,1.9) circle [x radius=.45cm, y radius=.125cm, rotate=15.7];
\draw(-14.3,1.1) circle [x radius=.45cm, y radius=.125cm, rotate=15.7];
\draw(-8.7,4.06) circle [x radius=.45cm, y radius=.125cm, rotate=-43.86];
\draw(-10.66,5.96) circle [x radius=.45cm, y radius=.125cm, rotate=-43.86];
\draw(-12.6,7.87) circle [x radius=.45cm, y radius=.125cm, rotate=-43.86];

\draw(-4.1,-0.85) circle [x radius=.45cm, y radius=.125cm, rotate=32];
\draw(-6.5,-2.36) circle [x radius=.45cm, y radius=.125cm, rotate=32];
\draw(-9.0,-2.6) circle [x radius=.45cm, y radius=.125cm, rotate=-15.7];
\draw(-11.7,-1.9) circle [x radius=.45cm, y radius=.125cm, rotate=-15.7];
\draw(-14.3,-1.1) circle [x radius=.45cm, y radius=.125cm, rotate=-15.7];
\draw(-8.7,-4.06) circle [x radius=.45cm, y radius=.125cm, rotate=43.86];
\draw(-10.66,-5.96) circle [x radius=.45cm, y radius=.125cm, rotate=43.86];
\draw(-12.6,-7.87) circle [x radius=.45cm, y radius=.125cm, rotate=43.86];

\draw(1.5,0.) circle [x radius=.45cm, y radius=.125cm, rotate=0];
\draw(4.1,0.85) circle [x radius=.45cm, y radius=.125cm, rotate=32];
\draw(6.5,2.36) circle [x radius=.45cm, y radius=.125cm, rotate=32];
\draw(9.0,2.6) circle [x radius=.45cm, y radius=.125cm, rotate=-15.7];
\draw(11.7,1.9) circle [x radius=.45cm, y radius=.125cm, rotate=-15.7];
\draw(8.7,4.06) circle [x radius=.45cm, y radius=.125cm, rotate=43.86];
\draw(10.66,5.96) circle [x radius=.45cm, y radius=.125cm, rotate=43.86];

\draw(4.1,-0.85) circle [x radius=.45cm, y radius=.125cm, rotate=-32];
\draw(6.5,-2.36) circle [x radius=.45cm, y radius=.125cm, rotate=-32];
\draw(9.0,-2.6) circle [x radius=.45cm, y radius=.125cm, rotate=15.7];
\draw(11.7,-1.9) circle [x radius=.45cm, y radius=.125cm, rotate=15.7];
\draw(8.7,-4.06) circle [x radius=.45cm, y radius=.125cm, rotate=-43.86];
\draw(10.66,-5.96) circle [x radius=.45cm, y radius=.125cm, rotate=-43.86];

\end{tikzpicture}
\caption{A copy of $T^{(k)}_{13}$, as described in Construction \ref{con:k=k}.}
\label{fig:T11}
\end{center}
\end{figure}

\begin{observation}\label{k>=6 tree construction edge count observation}
If $k\geq 6$ and $m\geq 8$, then 
\[
\left|E\left(T^{(k)}_m\right)\right|=
\begin{cases} 
2^{s+1}+2^s+2^{s-1}+2^{s-2}-6&\text{ if }m=4s,\\
2^{s+2}+2^{s-1}-6&\text{ if }m=4s+1,\\
2^{s+2}+2^{s}-6&\text{ if }m=4s+2,\\
2^{s+2}+2^{s+1}+2^{s-1}-6&\text{ if }m=4s+3.
\end{cases}
\]
\end{observation}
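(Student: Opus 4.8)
The statement to prove is Observation~\ref{k>=6 tree construction edge count observation}: a closed-form count for $\left|E\left(T^{(k)}_m\right)\right|$ when $k\geq 6$ and $m\geq 8$, split by the residue of $m$ modulo $4$. Since $T^{(k)}_m$ is built by an explicit recursive procedure (Construction~\ref{con:k=k}), the plan is to track the edge count level by level as the construction proceeds, treating the $m\not\equiv 0\pmod 4$ case and the $m=4s$ case separately, as the construction itself does.

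\textbf{Setting up the recursion for $m\not\equiv 0\pmod 4$.} First I would isolate a single ``side'' of the tree (left or right) and count edges added per level. Starting from $v$, each side begins with one edge (step~1). A full round of steps~2--3 takes each current ``active'' leaf-edge, appends two pendant edges to one of its degree-$1$ vertices (step~2), then appends one pendant edge to each of those (step~3); so if a round starts with $t$ active edges it adds $2t$ edges in step~2 and $2t$ in step~3, ending with $2t$ active edges for the next round — i.e.\ the number of active edges doubles each round and each round contributes $4t$ new edges. I would let $L$ be the number of complete levels (rounds) performed, with the active-edge count running $1,2,4,\dots,2^{L}$, so the total over one side after $L$ rounds is $1 + \sum_{j=0}^{L-1} 4\cdot 2^j = 4\cdot 2^{L} - 3$; both sides together give $2(4\cdot2^{L}-3)$. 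Then I would read off from step~4 of the construction how $L$ relates to $m$ in each residue class ($L=\frac{m-5}{2}$ for $m\equiv1$, etc., being careful about the ``+1'' and about which residue uses $\frac{m-5}{2}$ vs $\frac{m-6}{2}$, $\frac{m-7}{2}$) and add the handful of extra edges from the tail instructions (repeat step~3 on one side; repeat step~3 on both; repeat steps~2,3 asymmetrically). Substituting $m=4s+r$ and simplifying each of the three resulting arithmetic expressions should produce $2^{s+2}+2^{s-1}-6$, $2^{s+2}+2^{s}-6$, and $2^{s+2}+2^{s+1}+2^{s-1}-6$ respectively. I would double-check against a small base case — e.g.\ $m=9$ ($s=2,r=1$) should give $16+2-6=12$ — by literally counting edges in the construction, and also verify consistency with Figure~\ref{fig:T11} ($m=13$, $s=3,r=1$, predicted $32+4-6=30$ edges).

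\textbf{The $m=4s$ case.} Here the construction has three main branches rather than two, and the per-round pattern is steps~2--3 where step~2 adds one pendant edge per active edge and step~3 adds two pendant edges per active edge — so a round starting with $t$ active edges adds $t+2t=3t$ edges and leaves $2t$ active edges. Starting each branch with one edge (from the initial star, step~1), after $L$ rounds one branch has $1+\sum_{j=0}^{L-1}3\cdot2^j = 3\cdot2^{L}-2$ edges with $2^{L}$ active edges; three branches give $3(3\cdot2^{L}-2)$, and then the final ``repeat step~2'' tail adds one more edge per active edge, i.e.\ $3\cdot 2^{L}$ more (or adjust if the tail is applied per branch). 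With $L$ determined by ``$\frac{m-4}{2}$ levels'', i.e.\ $L=\frac{m-4}{2}=2s-2$, substitution and simplification should yield $2^{s+1}+2^s+2^{s-1}+2^{s-2}-6$. The $m=8$ ($s=2$) edge case, where step~3 is skipped, must be checked by hand against Figure~\ref{fig:T8}: predicted $8+4+2+1-6=9$ edges. I expect a slight bookkeeping subtlety in whether the initial star's three edges count as ``level $0$'' and exactly when the tail step is applied, which is where an off-by-one in the exponent could creep in.

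\textbf{Main obstacle.} The argument is entirely elementary — no clever idea is needed — so the real difficulty is purely bookkeeping: correctly parsing the prose of Construction~\ref{con:k=k} into the right recurrence, getting the number of levels $L$ exactly right as a function of $s$ and $r$ (the construction phrases this as ``$\frac{m-5}{2}+1$ levels'' etc., which must be translated carefully), and correctly accounting for the asymmetric tail operations that are applied to ``one of the two main branches'' rather than both. Since the final formulas have the rigid shape (sum of a few powers of $2$) $-6$, the constant $-6$ and the exact set of exponents act as a strong consistency check: I would derive each case, simplify, and confirm the $-6$ emerges and that plugging in the smallest admissible $m$ in each residue class reproduces a directly hand-countable value before declaring the observation proved.
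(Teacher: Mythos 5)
Your approach (direct level-by-level counting from Construction~\ref{con:k=k}) is the natural one, and the paper itself gives no proof of this observation, so the only question is whether the bookkeeping is correct. For $m\not\equiv 0\pmod 4$ it is: the construction's target of $\frac{m-5}{2}+1$, $\frac{m-6}{2}+1$, $\frac{m-7}{2}+1$ levels all reduce to $2s-1$, the number of complete rounds is $L=s-1$, each side contributes $4\cdot 2^{s-1}-3 = 2^{s+1}-3$ edges, both sides give $2^{s+2}-6$, and the tails add $2^{s-1}$, $2\cdot 2^{s-1}=2^s$, or $2^{s+1}+2^{s-1}$ edges respectively, matching the stated formulas.

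However, the $m=4s$ case as written has a genuine gap, not merely an ``off-by-one in the exponent.'' You set $L=\frac{m-4}{2}=2s-2$ and assert the formula $3(3\cdot2^L-2)+3\cdot2^L=12\cdot 2^L-6$ simplifies to the target, but $12\cdot 2^{2s-2}-6=3\cdot 4^s-6$ grows like $4^s$, not $2^s$. The quantity $L$ must be the number of \emph{complete rounds} of steps 2--3, which is $s-2$, not the level count $2s-2$. But even with $L=s-2$, your formula gives $12\cdot 2^{s-2}-6=2^{s+1}+2^s-6$, which is still short by $2^{s-1}+2^{s-2}=3\cdot 2^{s-2}$. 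The missing edges come from a partial round that your full-round accounting omits: for $m=4s$ the target $\frac{m-4}{2}=2s-2$ is an even level, but complete rounds end at odd levels ($1+2L$), so the main loop performs one extra step~2 (without its matching step~3) after the last complete round, reaching level $2s-2$; only then does the tail ``repeat step~2'' run. That unmatched step~2 adds $2^{s-2}$ edges per branch, i.e.\ $3\cdot 2^{s-2}$ total, which is exactly the gap. Your proposed base-case check would expose this: for $m=8$ ($s=2$, $L=0$) your recurrence gives $12-6=6$, but Figure~\ref{fig:T8} and the stated formula give $9$. To fix the argument you need to split the per-branch count as (step~1) $+$ ($L=s-2$ full rounds contributing $3\cdot 2^{s-2}-3$) $+$ (the partial-round step~2 contributing $2^{s-2}$) $+$ (the tail step~2 contributing $2^{s-2}$), giving $2^s+2^{s-2}-2$ per branch and $3(2^s+2^{s-2}-2)=2^{s+1}+2^s+2^{s-1}+2^{s-2}-6$ overall.
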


\subsection{The saturation of $\mathbf{T^{(k)}_m}$}
We now provide a proof that $T^{(k)}_m$ is Berge-$P_m$-saturated for all $k\geq 6$ and $m\geq 10$. As the saturation of $T^{(3)}_m$ and $T^{(4)}_m$ follow from almost the same arguments that we will make for the case $k\geq 6$, we provide a sketch of how to adapt the $k\geq 6$ proof for these cases.

A vertex of degree $3$ in $T^{(k)}_m$ will be called a \emph{branch vertex} and an edge that contains three vertices of degree $2$ will be called a \emph{branch edge}. Note that $T^{(3)}_m$ and $T^{(4)}_m$ contain branch edges but not branch vertices, while $T^{(k)}_m$ with $k\geq 6$ contains branch vertices but not branch edges. With these definition in hand, we now prove the following:

	\begin{lemma}\label{tree saturation lemma}
	For all $k\geq 3$, $k\neq 5$ and $m\geq 10$, $T^{(k)}_m$ is Berge-$P_m$-saturated.
	\end{lemma}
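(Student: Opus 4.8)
The claim has two halves for each of the three families ($k=3$, $k=4$, $k\geq 6$): that $T^{(k)}_m$ is Berge-$P_m$-\emph{free}, and that adding any missing edge creates a Berge-$P_m$. I would treat the $k\geq 6$ case in full and then indicate the (minor) modifications for $k\in\{3,4\}$, exactly as the preamble to the lemma suggests. The organizing idea is to track, for each vertex $u$ of the linear tree, the length $\ell(u)$ of the longest Berge path in $T^{(k)}_m$ with $u$ as an endpoint; since the tree is linear (any two edges share at most one vertex) and acyclic, a Berge path is essentially a path in the underlying ``skeleton'' graph, so $\ell(u)$ can be computed level by level from the leaves inward. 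The construction is engineered so that the two (or three) main branches each support a path of length roughly $m/2$ hanging off the center, but no combination of branches glues together into a path on $m$ vertices.

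\textbf{Step 1: Berge-$P_m$-freeness.} Root the tree at the central vertex $v$ (for the $m=4s$ case of $k\geq 6$, at the degree-$3$ center). A longest Berge path in a linear tree is obtained by choosing a vertex, taking the longest root-to-leaf ``descent'' on one side and the longest on another side through that vertex, and these descents can overlap in at most one edge. I would define, for each edge $e$ added at level $i$, the quantity $d(e)=$ the maximum number of vertices on a Berge path that starts inside $e$ and goes strictly downward (away from $v$), and prove by induction on the number of levels below $e$ a closed-form formula (this is where Observations~\ref{k=3 tree construction edge count observation}, \ref{k=4 tree construction edge count observation}, \ref{k>=6 tree construction edge count observation} and the level structure of Constructions~\ref{con:k=3}--\ref{con:k=k} get used — the ``two pendant edges then one pendant edge'' pattern is precisely what doubles the branch lengths every two levels while adding the right constant). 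Combining the longest downward path from the left main branch with the longest from the right (plus the center vertex, and for $m=4s$ the third branch changes the bookkeeping slightly) gives a longest Berge path on exactly $m-1$ vertices; the asymmetric ``extra level on the left side'' tweaks for each residue class of $m$ are exactly calibrated so this count is $m-1$ and not $m$. Hence $T^{(k)}_m$ contains a Berge-$P_{m-1}$ but no Berge-$P_m$.

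\textbf{Step 2: Saturation.} Let $f\notin E(T^{(k)}_m)$ be any $k$-set of vertices and consider $T^{(k)}_m+f$; I must exhibit a Berge-$P_m$. Since $f$ has $k$ vertices and is not an existing edge, $f$ meets the vertex set in one of a few structural ways: it may join two distinct branches, join two far-apart vertices of the same branch, or attach near a leaf. In every case the key observation is that from ``most'' vertices of $T^{(k)}_m$ there is a Berge path of length at least $m-2$ to one endpoint (because each branch already supports a path of length about $m/2$, and the diameter argument of Case~1 in the proof of Theorem~\ref{main theorem} — two long descents sharing at most one edge — pushes this up); so an edge $f$ meeting two vertices $x,y$ that each see disjoint-enough long paths yields a Berge path on $\geq m$ vertices using $f$ as one of its edges. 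The delicate sub-case is when $f$ is concentrated near a single leaf or near the center, where the ``long path from $x$'' and ``long path from $y$'' might be forced to overlap; here one uses that $f$ contains $k\geq 3$ (resp.\ $\geq 6$) vertices, so $f$ reaches a vertex sufficiently far down some branch, and the branch lengths were chosen with enough slack (the $-5$, $-8$, $-6$ constants give a little room) that the resulting path still has $m$ vertices. I would package this as: (a) show every vertex $u$ has $\ell(u)\geq$ some explicit bound depending on which region of the tree $u$ lies in; (b) show any non-edge $f$ contains two vertices $x,y$ in regions whose guaranteed paths, together with one edge from $f$ connecting them, assemble into a Berge path on $\geq m$ vertices, being careful that the two guaranteed paths and $f$ are edge-disjoint (they will be, since $f$ is new).

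\textbf{Main obstacle.} The bookkeeping in Step~2 for the ``bad'' placements of $f$ — near a leaf, near the center, or joining two vertices at the \emph{same} level of two branches — is the crux: one must verify that the two long paths promised at $x$ and at $y$ can be chosen edge-disjoint from each other after routing through $f$, and that no residue class of $m$ (especially $m=4s$, with its three-branch center) falls one vertex short. I expect the proof to proceed by a short case analysis on the ``type'' of $f$ relative to the rooted structure, with the freeness computation of Step~1 reused to certify exact path lengths. The adaptations for $k=3,4$ are notational: ``branch vertex'' is replaced by ``branch edge,'' and the descent recursions use Observations~\ref{k=3 tree construction edge count observation} and \ref{k=4 tree construction edge count observation} in place of \ref{k>=6 tree construction edge count observation}, but the two-step doubling pattern and the endpoint-path argument are identical.
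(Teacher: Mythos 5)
Your Step 1 reaches the same conclusion as the paper's (the construction's longest Berge path has $m-2$ edges, hence $m-1$ vertices), though the paper simply asserts this from the construction rather than running a level-by-level induction; that difference is harmless. The trouble is with Step 2, which is a plan rather than a proof, and the plan omits the one technical idea that makes the paper's argument feasible.

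The paper does not work with ``the length $\ell(u)$ of the longest Berge path with $u$ as an endpoint.'' It works with two direction-aware quantities: $\ell(v)$ and $r(v)$, the lengths of the \emph{shortest} paths from $v$ to a leaf on the left side and the right side respectively, and the key structural invariant is that $\ell(v)+r(v)=m-2$ for every vertex of degree $\geq 2$ and $\ell(v)+r(v)=m-1$ for leaves. This invariant is what lets one, given $x,y$ in the new edge $e$ with $\min\{\ell(x),r(x)\}\leq\min\{\ell(y),r(y)\}$, traverse the longer half of a longest path down to $x$ (picking up $m-2-\min\{\ell(x),r(x)\}$ edges), hop through $e$ to $y$, and continue $\min\{\ell(y),r(y)\}$ more edges, landing at $\geq m-1$ edges. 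Your single quantity ``longest path ending at $u$'' does not support this split-and-rejoin count, and your phrase ``from most vertices there is a Berge path of length at least $m-2$ to one endpoint'' is not the inequality that is actually used.

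The second omission is the case reduction. The paper does not analyze ``where $f$ sits'' ad hoc; it first shows that if the hop-through-$e$ argument above fails for every pair $x,y\in e$, then all vertices of $e$ lie on a single longest path $P^*$ and all on one side of the tree. It then isolates $x^*$ (closest to center) and $y^*$ (farthest) in $e$, shows $d(x^*,y^*)\leq 3$ by bouncing off a branch vertex (or branch edge for $k\in\{3,4\}$), and finishes with a short, concrete case analysis in which one exploits specific degree-$1$ vertices inside $e$ to splice $e$ into an existing Berge-$P_{m-1}$. Your plan's item (b) waves at ``being careful that the two guaranteed paths and $f$ are edge-disjoint (they will be, since $f$ is new),'' but $f$ being new only makes $f$ disjoint from the tree's edges; the real difficulty is ensuring the two guaranteed long paths are edge-disjoint \emph{from each other}, and that is precisely what forces the reduction to one side of the tree and the bounded distance between $x^*$ and $y^*$. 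As written, your proposal would stall exactly at this point. The remark that the ``$-5$, $-8$, $-6$ constants give a little room'' is also off the mark: those are edge counts of the constructions and play no role in the path-length arithmetic of the saturation argument.

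In short: your freeness argument is fine and the general shape of Step 2 (reduce to positions of $e$, build a long path through $e$) is the right shape, but the proposal is missing the $\ell(v)+r(v)$ invariant and the ``all of $e$ on one side, $d(x^*,y^*)\leq 3$'' reduction, which are the load-bearing parts of the paper's proof and cannot be replaced by the vague ``assemble guaranteed long paths'' step you describe.
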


\begin{proof}
	It is clear from the construction of $T^{(k)}_m$ that the longest path stretches from a leaf on the left to a leaf on the right, or in the case $k\geq 6$ and $4|m$, a leaf in one of the three main branches to a leaf in another main branch. This path is of length $m-2$, so $T^{(k)}_m$ is Berge-$P_m$-free. Now we show that any edge added will create a Berge-$P_m$.
	
	We first consider the case when $k\geq 6$ and $4\nmid m$. Let $e$ be any edge in $\overline{T^{(k)}_m}$. For each vertex $v\in V(T^{(k)}_m)$, let $\ell(v)$ and $r(v)$ be the length of the shortest path that starts at $v$ and ends on a leaf on the left, or right side of $T^{(k)}_m$ respectively. Note that for each $v$ $\ell(v)+r(v)=m-2$ if $d(v)\geq 2$ and $\ell(v)+r(v)=m-1$ if $d(v)=1$ since the edge containing $v$ will be used both on the left path and the right path.
	
	Let $x,y\in e$ with $\min\{\ell(x),r(x)\}\leq \min\{\ell(y),r(y)\}$. Let $P$ be a Berge-$P_{m-1}$ containing $x$. If $y$ is not contained in an edge of $P$, then we can traverse the longer part of $P$, hitting at least $m-2-\min\{\ell(x),r(x)\}$ edges until we hit $x$, then use $e$ to jump to $y$, and then take a path from $y$ on $\min\{\ell(y),r(y)\}$ edges to a leaf that does not contain $x$. This path has length at least
	\[
	m-2-\min\{\ell(x),r(x)\}+1+\min\{\ell(y),r(y)\}\geq m-1.
	\]
	Thus this path contains a desired Berge-$P_m$ in $T^{(k)}_m+e$.
	
	Then we may assume that each pair $x,y$ in $e$ with $\min\{\ell(x),r(x)\}\leq \min\{\ell(y),r(y)\}$ has that $y$ is in every longest path containing $x$. This implies that all the vertices in $e$ are in a single longest path $P^*$.  Further, this implies that every vertex is on one side of the construction or in the center since if we had vertices properly on both sides of the construction, there would be a Berge-$P_{m-1}$ that contains a vertex on one side, but not the vertex on the other side due to the branching structure of $T^{(k)}_m$. Say that the vertices are all on the left side or in the center. Let $x^*\in e$ be such that $\ell(x^*)=\min\{\ell(v)\mid v\in e\}$ and let $y^*$ be such that $\ell(y^*)=\max\{\ell(v)\mid v\in e\}$. 
	
	If $d(x^*,y^*)\geq 4$, then there is a branch vertex between $x^*$ and $y^*$ that is not adjacent to $x^*$. We can traverse $P^*$ until we get to $y^*$ picking up $r(y^*)$ edges, then hop to $x^*$, then traverse $P^*$ backwards until we hit the last branch vertex of $P^*$ before $y^*$ hitting at least $d(x^*,y^*)-2$ edges, then take this branch down to a leaf using at least $\ell(y^*)-2$ edges, giving us a path of length at least
	\[
	r(y^*)+1+d(x^*,y^*)-2+\ell(y^*)-2\geq r(y^*)+\ell(y^*)+1\geq m-1.
	\]
	Thus we can assume that  $d(x^*,y^*)\leq 3$.
	
	Note that this preceding path also works if $d(x^*,y^*)=3$ as long as either $d(y^*)=1$, since then $r(y^*)+\ell(y^*)= m-1$, or $d(y^*)=2$ since then we hit $d(x^*,y^*)-1$ edges backtracking along $P^*$ and $\ell(y^*)-1$ edges going down to a leaf since the branching point is only one edge away from $y^*$. Thus we may assume $d(x^*,y^*)\leq 3$ and if equality holds, $d(y^*)=3$.
	
	If $e$ contains a pair of vertices $u_1$ and $u_2$ such that $d(u_1)\geq 2$, $d(u_2)=1$ and $d(u_1,u_2)=1$, or such that $d(u_1)=d(u_2)=1$ and $d(u_1,u_2)=2$, then $e$ can be added into any path of length $m-2$ that contains the edge containing $u_1$ and $u_2$ by traversing the path until $u_1$, then using $e$ to jump from $u_1$ to $u_2$, then traversing the rest of the path. Thus we are done unless $d(x^*,y^*)\geq 3$ since there is no way for two adjacent edges to contain all $k$ vertices without having a pair of vertices that can play the roles of $u_1$ and $u_2$ above.
	
	Then we have that $d(x^*,y^*)=3$ and $d(y^*)=3$. Let $e_1'$, $e_2'$ and $e_3'$ be the edges in the path between $x^*$ and $y^*$ with $x^*\in e_1'$ and $y^*\in e_3'$. Now, $e\subset e_1'\cup e_2'\cup e_3'$. We can assume that there are no vertices from $e$ in $e_3'\setminus e_2'$ aside from $y$ since any such vertex along with $y$ would satisfy the conditions on $u_1$ and $u_2$ above.
	
	It could be the case that $e_1'$ is a leaf, as in the left diagram of Figure \ref{fig: saturation figure}. In this case, $d(x^*)=1$, so any vertex in $e_2'\setminus e_3'$ along with $x^*$ would satisfy the conditions on $u_1$ and $u_2$ above, and if any vertex in $e_1'\setminus e_2'$ aside from $x^*$ was is $e$, then we could extend a path of length $m-2$ that ends at $x^*$ by $1$ edge using $e$ to jump from $x^*$ to the other vertex in $e_1'\cap e$. Finally any degree $1$ vertex in $e_3'$ along with $y^*$ satisfy the conditions on $u_1$ and $u_2$ above, so none of these vertices can be in $e$. This only leaves the vertex in $e_2'\cap e_3'$, but $k\geq 6$ giving us a contradiction with the size of the edges $e$. Thus we can assume $e_1'$ is not a leaf, as in the right hand diagram of Figure \ref{fig: saturation figure}.
	
			\begin{figure}[ht]
			\begin{center}
				\begin{tikzpicture}[scale=0.7,line cap=round,line join=round,>=triangle 45,x=.5cm,y=.5cm]
				
				\draw [rotate around={45:(3,3)}] (3,3) ellipse (.9cm and 0.25cm);
				\draw [rotate around={45.:(5,5)}] (5,5) ellipse (.9cm and 0.25cm);
				\draw [rotate around={45:(7,7)}] (7,7) ellipse (.9cm and 0.25cm);

				\draw [rotate around={45:(1,1)}] (1,1) ellipse (.9cm and 0.25cm);
				\draw [rotate around={45.:(-1,-1)}] (-1,-1) ellipse (.9cm and 0.25cm);
				\draw [rotate around={45:(-3,-3)}] (-3,-3) ellipse (.9cm and 0.25cm);
				
				\draw [rotate around={-45.:(3,1)}] (3,1) ellipse (.9cm and 0.25cm);
				\draw [rotate around={-45.:(5,-1)}] (5,-1) ellipse (.9cm and 0.25cm);
				\draw [rotate around={-45.:(7,-3)}] (7,-3) ellipse (.9cm and 0.25cm);
				
				\draw [rotate around={-45.:(7,5)}] (7,5) ellipse (.9cm and 0.25cm);
				
				\draw[color=black] (9.4,7.7) node {$\dots$};
				\draw[color=black] (9.4,4.3) node {$\dots$};
				
				\draw[color=black] (-3.3,-3) node[draw,shape=circle,fill=black,scale=0.13,label=above:$x^*$] {$x^*$};
				\draw[color=black] (2,2) node[draw,shape=circle,fill=black,scale=0.13,label=above:$y^*$] {$y^*$};
				
				\end{tikzpicture}
				\qquad
				\begin{tikzpicture}[scale=0.7,line cap=round,line join=round,>=triangle 45,x=.5cm,y=.5cm]
				
				\draw [rotate around={45:(3,3)}] (3,3) ellipse (.9cm and 0.25cm);
				\draw [rotate around={45.:(5,5)}] (5,5) ellipse (.9cm and 0.25cm);

				\draw [rotate around={45:(1,1)}] (1,1) ellipse (.9cm and 0.25cm);
				\draw [rotate around={45.:(-1,-1)}] (-1,-1) ellipse (.9cm and 0.25cm);
				\draw [rotate around={45:(-3,-3)}] (-3,-3) ellipse (.9cm and 0.25cm);
				
				\draw [rotate around={-45.:(3,1)}] (3,1) ellipse (.9cm and 0.25cm);
				\draw [rotate around={-45.:(5,-1)}] (5,-1) ellipse (.9cm and 0.25cm);

				\draw [rotate around={-45.:(-1,-3)}] (-1,-3) ellipse (.9cm and 0.25cm);
				\draw [rotate around={45.:(-5,-5)}] (-5,-5) ellipse (.9cm and 0.25cm);
				\draw [rotate around={-45.:(1,-5)}] (1,-5) ellipse (.9cm and 0.25cm);

				\draw[color=black] (7.5,5.9) node {$\dots$};
				\draw[color=black] (7.5,-1.9) node {$\dots$};
				\draw[color=black] (-7.5,-5.9) node {$\dots$};
				\draw[color=black] (3.5,-5.9) node {$\dots$};
				
				\draw[color=black] (-3.3,-3) node[draw,shape=circle,fill=black,scale=0.13,label=above:$x^*$] {$x^*$};
				\draw[color=black] (2,2) node[draw,shape=circle,fill=black,scale=0.13,label=above:$y^*$] {$y^*$};
				
				\end{tikzpicture}
				\caption{The two cases when $d(x^*,y^*)=3$ and $d(y^*)=3$.}
				\label{fig: saturation figure}
			\end{center}
		\end{figure}
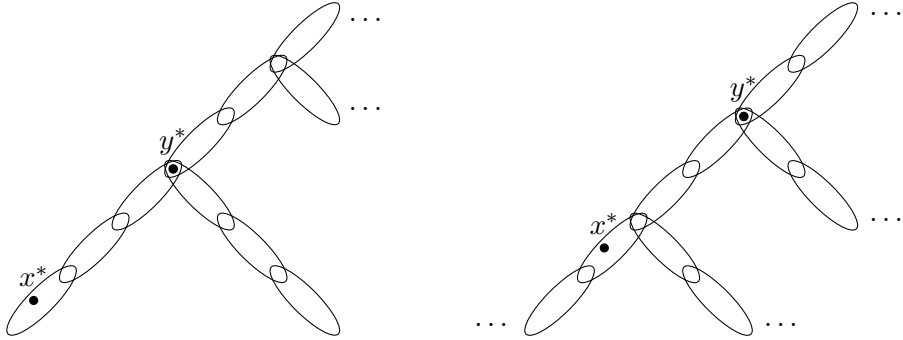
		
	If $d(x^*)\geq2$ in $T_m^{(k)}$, then by the preceding argument, there is at most one vertex in $e$ at distance $1$ from $x^*$, and if $d(x^*)=1$, then by the preceding argument, there are at most $k-3$ vertices from $e$ at distance $1$ from $x^*$ since they all must be degree $1$ vertices. Thus in all cases, there are at least two vertices not in the same edge as $x^*$. One is $y^*$, call the other $w^*$.
	
	Since $y^*$ has degree $3$ and $d(y^*,w^*)\leq 2$, there is a branch vertex between $w^*$ and $x^*$, and $d(w^*)\leq 2$ since this branch vertex is distance $1$ from $x^*$.
	
	Then we can traverse $P^*$ until we hit $w^*$ picking up $r(w^*)$ edges, then jump to $x^*$, pick up a single edge going to the branch vertex, then take a path down from the branching point using $\ell(w^*)-2$ edges if $d(w^*)=1$ and $d(w^*,y^*)=1$ or $\ell(w^*)-1$ otherwise. If $d(w^*)=1$ and $d(w^*,y^*)=1$, then this gives us at path on 
	\[
	r(w^*)+1+1+\ell(w^*)-2=r(w^*)+\ell(w^*)=m-1
	\]
	edges and otherwise, a path on
	\[
	r(w^*)+1+1+\ell(w^*)-1=r(w^*)+\ell(w^*)+1\geq m-1
	\]		
	edges, a desired path in either case. Thus $T^{(k)}_m$ is saturated when $4\nmid k$.
	
	When $k\geq 6$ and  $4\mid k$, the proof follows from a similar argument. The only difference is that instead of having two sides, a left side and a right side, $T^{(k)}_m$ has three branches coming from the center. Thus instead of having $\ell(v)$ and $r(v)$, we can label the three branches with integers $1,2$ and $3$, and define $\ell_i(v)$ for $1\leq i\leq 3$ by the distance from $v$ to a leaf in branch $i$. This will give us that all the vertices of $e$ are on one path inside one main branch. From here, the proof is identical.
	
	For $k=3$ and $k=4$, again the proof follows similarly. We can argue that all the vertices of $e$ must lie on one path and all on one side of the tree in exactly the same ways as $k\geq 6$. Define $x^*$ and $y^*$ in the same way as in the proof of $k\geq6$. For $k=3$, we can establish that $d(x^*,y^*)\leq 3$ the same as before, but with branch edges in place of branch vertices. In the case of $k=4$, we cannot prove that $d(x^*,y^*)\leq 3$ as easily since the branch edges in $T^{(4)}_m$ are further away from eachother than branch edges in  in $T^{(3)}_m$. Even so, using essentially the same idea we can establish $d(x^*,y^*)\leq 5$. From here, for both $k=3$ and $k=4$, there are only a few small cases to work out similar to the end of the proof for $k\geq 6$.
\end{proof}

It is worth noting that in Lemma \ref{tree saturation lemma} we assume $m\geq 10$ even though we define some constructions with $m<10$, such as $T^{(3)}_8$ or $T^{(172)}_9$. These constructions are also saturated; we choose $m\geq 10$ to simplify the statement of Lemma \ref{tree saturation lemma} since we do not provide constructions for $k=4$ and $m<10$. 

\subsection{Using $\mathbf{T^{(k)}_m}$ to construct $\mathbf{H^{(k)}_{n,m}}$}

We will use the tree $T^{(k)}_m$ as the building block to construct a Berge-$P_m$ saturated hypergraph on $n$ vertices which we will call $H^{(k)}_{n,m}$.

\begin{construction}\label{H construction}
Let $k\geq 3$, $k\neq 5$ and $m\geq 10$. 

Let $r=n \mod ((k{-}1)|E(T^{(k)}_m)|{+}1$ and let $H$ be the hypergraph on $n$ vertices that contains $\left\lfloor{\frac{n}{(k{-}1)|E(T^{(k)}_m)|{+}1}}\right\rfloor$ components isomorphic to $T^{(k)}_m$ and $r$ isolated vertices, $v_1,\dots,v_r$. If $r>0$, we will show how to incorporate these vertices into one of the copies of $T^{(k)}_m$, which we will call $T'$.
 
 First, we will create new leaves in $T'$ using the isolated vertices in $H$. Let $v'\in V(T')$ be the vertex of degree $2$ in some leaf of $T'$ and for each $0\leq i\leq\left\lfloor \frac{r}{k{-}1}\right\rfloor-1$, let $e_i=\{v', v_{(k-1)i+1},v_{(k-1)i+2},...,v_{(k-1)i+k}\}$. Then let $H'$ be a hypergraph with $V(H')=V(H)$ and $E(H')=E(H)\cup\bigcup_{i=0}^{\left\lfloor \frac{r}{k{-}1}\right\rfloor-1}e_i$.
 
 Then $H'$ has $r'=r \mod (k{-}1)$ isolated vertices. If $r'>0$, the method for including the remaining $r'$ vertices is dependent upon the uniformity, but for each $k$ we will add one additional edge, $e'$, which will be incident to the same edges as some specific edge $e^*\in T^{(k)}_m$.

For $k=3$, we must have $r'=1$. This vertex can be added by ``cloning'' the central edge of $T'$ meaning we form an edge using the remaining new vertex and the two vertices of degree $2$ in the center edge of $T'$. In this case $e^*$ is the central edge.
For $k=4$, consider the left three initial edges, $e_1,e_2$, and $e_3$, labeling from the left. 
Let $e'$ be the edge containing the $r'$ isolated vertices, the vertex of degree 2 in $e_1\cap e_2$, and $3-r'\geq 1$ vertices of degree $1$ in $e_3$. In this case $e^*=e_2$.
For $k\geq 6$, let $e'$ be the edge containing the $r'$ isolated vertices, the degree $3$ vertex in one of the center edges, and $k{-}(r'{+}1)$ vertices of degree $1$ from the other center edge as in Figure \ref{fig: absorbingVertices}. 

Then we let $H^{(k)}_{n,m}=H'+e'$.
\end{construction}

	\begin{figure}[ht]
		\begin{center}
			\begin{tikzpicture}[line cap=round,line join=round,>=triangle 45,x=.5cm,y=.5cm]
			\clip(-9,-3.5) rectangle (9,3.5);
			\draw [rotate around={0.:(-1.5,0.)}] (-1.5,0.) ellipse (.9cm and 0.25cm);
			\draw [rotate around={-32.:(-4.1,0.85)}] (-4.1,0.85) ellipse (.9cm and 0.25cm);
			\draw [rotate around={-32:(-6.5,2.36)}] (-6.5,2.36) ellipse (.9cm and 0.25cm);
			
			\draw [rotate around={32.:(-4.1,-0.85)}] (-4.1,-0.85) ellipse (.9cm and 0.25cm);
			\draw [rotate around={32:(-6.5,-2.36)}] (-6.5,-2.36) ellipse (.9cm and 0.25cm);
			
			\draw [rotate around={0.:(1.5,0.)}] (1.5,0.) ellipse (.9cm and 0.25cm);
			\draw [rotate around={32.:(4.1,0.85)}] (4.1,0.85) ellipse (.9cm and 0.25cm);
			\draw [rotate around={32:(6.5,2.36)}] (6.5,2.36) ellipse (.9cm and 0.25cm);

			\draw [rotate around={-32.:(4.1,-0.85)}] (4.1,-0.85) ellipse (.9cm and 0.25cm);
			\draw [rotate around={-32:(6.5,-2.36)}] (6.5,-2.36) ellipse (.9cm and 0.25cm);
			
			\draw[dashed] (-.7,0) .. controls (-.2,-1) and (.2,-1) .. (2.5,0); 
			\draw[dashed] (2.5,0) .. controls (2.7,.2) and (2.9,.2) .. (3.2,0); 
			\draw[dashed] (-1.6,0) .. controls (-.6,-1.5) and (.6,-2) .. (3.2,0); 
			\draw[dashed] (-1.6,0) .. controls (-1.3,.2) and (-1,.2) .. (-.7,0); 
			
			\draw[dotted, line width=.3mm](-8.8,3.3)--(-8.3,3.3);
			\draw[dotted, line width=.3mm](-8.8,-3.3)--(-8.3,-3.3);
			\draw[dotted, line width=.3mm](8.8,3.3)--(8.3,3.3);
			\draw[dotted, line width=.3mm](8.8,-3.3)--(8.3,-3.3);
			
			\draw[color=black] (1.4,-1.5) node {$e'$};
			\draw[color=black] (1.5,0.85) node {$e^*$};
			
			\end{tikzpicture}
			\caption{A copy of $T'$, for $k\geq6$, as described in Construction \ref{H construction}.}
			\label{fig: absorbingVertices}
		\end{center}
	\end{figure}

\begin{observation}\label{H edge count observation}
	Let $k\geq 3$ with $k\neq 5$, and let $m\geq 10$. Then for all $n\geq (k-1)\left|E\left(T^{(k)}_m\right)\right|+1$, 
\[
\left|E\left(H^{(k)}_{n,m}\right)\right|=\left\lceil\frac{1}{k-1}\left(n-\left\lfloor{\frac{n}{(k-1)|E(T^{(k)}_m)|+1}}\right\rfloor\right)\right\rceil
\]
\end{observation}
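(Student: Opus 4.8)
The plan is to simply count the edges in $H^{(k)}_{n,m}$ stage by stage, following Construction \ref{H construction}, and then verify the resulting closed form equals the stated ceiling expression. Write $t = (k-1)|E(T^{(k)}_m)| + 1$ for the number of vertices in a copy of $T^{(k)}_m$ (recall a linear $k$-uniform tree on $|E|$ edges has $(k-1)|E|+1$ vertices), let $q = \lfloor n/t \rfloor$ and $r = n \bmod t$. The hypergraph $H$ consists of $q$ disjoint copies of $T^{(k)}_m$, contributing $q\,|E(T^{(k)}_m)|$ edges, plus $r$ isolated vertices. Passing to $H'$ we add $\lfloor r/(k-1) \rfloor$ edges $e_0,\dots,e_{\lfloor r/(k-1)\rfloor - 1}$, each absorbing $k-1$ fresh isolated vertices (the first one also re-uses the existing vertex $v'$). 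Finally, if $r' = r \bmod (k-1) > 0$ we add exactly one more edge $e'$; if $r' = 0$ we add nothing further.

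First I would record the edge count in each case. When $r' > 0$,
\[
|E(H^{(k)}_{n,m})| = q\,|E(T^{(k)}_m)| + \left\lfloor \frac{r}{k-1} \right\rfloor + 1,
\]
and when $r = 0$ the last two terms vanish, while when $r>0$ and $r' = 0$ only the $+1$ vanishes. In every case these collapse into the single formula
\[
|E(H^{(k)}_{n,m})| = q\,|E(T^{(k)}_m)| + \left\lceil \frac{r}{k-1} \right\rceil,
\]
since $\lceil r/(k-1)\rceil = \lfloor r/(k-1)\rfloor + 1$ precisely when $r' > 0$, equals $\lfloor r/(k-1)\rfloor$ when $r'=0$, and equals $0$ when $r=0$. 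So the bookkeeping across the three sub-cases of the construction is exactly what is needed to produce one clean ceiling.

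Next I would check this agrees with the target expression $\left\lceil \frac{1}{k-1}\left(n - \lfloor n/t \rfloor\right)\right\rceil = \left\lceil \frac{n - q}{k-1}\right\rceil$. Substitute $n = qt + r = q\bigl((k-1)|E(T^{(k)}_m)| + 1\bigr) + r$, so that $n - q = q(k-1)|E(T^{(k)}_m)| + r$. Dividing by $k-1$ and using that $q(k-1)|E(T^{(k)}_m)|$ is an integer multiple of $k-1$, the ceiling distributes: $\left\lceil \frac{n-q}{k-1}\right\rceil = q\,|E(T^{(k)}_m)| + \left\lceil \frac{r}{k-1}\right\rceil$, which matches the count above. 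This completes the argument; the only mild subtlety — and the one place to be careful — is confirming that in each sub-case ($k=3$ with $r'=1$; $k=4$; $k\geq 6$) Construction \ref{H construction} really adds exactly one edge $e'$ and introduces no new vertices, so that the vertex set stays of size $n$ and the edge tally is as claimed; this is immediate from the definitions of $e'$ in each case, all of which use only the $r'$ leftover vertices together with vertices already present in $T'$.
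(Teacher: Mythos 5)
Your proof is correct and follows essentially the same approach as the paper's brief justification: you directly tally the edges added at each stage of Construction \ref{H construction} (the $q$ copies of $T^{(k)}_m$, the $\lfloor r/(k-1)\rfloor$ pendant edges at $v'$, and the optional $e'$), unify the three sub-cases into $q\,|E(T^{(k)}_m)| + \lceil r/(k-1)\rceil$, and verify this matches the stated ceiling expression by writing $n - q = q(k-1)|E(T^{(k)}_m)| + r$. The paper phrases the same count slightly more abstractly, observing that $H^{(k)}_{n,m}$ has $\lfloor n/t\rfloor$ components of which all but at most one are linear trees and the exceptional one is edge-minimal, but the arithmetic is identical. (One cosmetic slip: all of the edges $e_i$, not merely the first, reuse the existing vertex $v'$; this does not affect your count.)
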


The preceding observation follows from the fact that $H^{(k)}_{n,m}$ has $\left\lfloor{\frac{n}{(k-1)\left|E\left(T^{(k)}_m\right)\right|+1}}\right\rfloor$ components, at most one of which is not a linear tree, and if $H^{(k)}_{n,m}$ does contain a component that is not a linear tree, this component is minimal in the sense that there are no connected $k$-uniform hypegraphs on the same vertex set with fewer edges.

\begin{lemma}\label{H saturation lemma}
	$H^{(k)}_{n,m}$ is saturated for all $k\geq 3$ with $k\neq 5$ and all $m\geq 10$.
\end{lemma}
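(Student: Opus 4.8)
The plan is to verify the two defining properties of saturation separately: that $H^{(k)}_{n,m}$ is Berge-$P_m$-free, and that adding any non-edge creates a Berge-$P_m$. For freeness, I would argue that each component is Berge-$P_m$-free. The components isomorphic to $T^{(k)}_m$ are free by Lemma \ref{tree saturation lemma}. For the single modified component $T'$ (into which the leftover isolated vertices were absorbed), I need to check that the absorption does not create a long path: the new leaves $e_i$ hung off a degree-$2$ vertex $v'$ of a leaf edge only increase the distance to a leaf by $1$ in a ``dead-end'' direction, and the single extra edge $e'$ is parallel (incident to the same edges) to an existing edge $e^*$ of $T^{(k)}_m$, so any path through $e'$ can be rerouted through $e^*$; hence the longest path in $T'$ still has length $m-2$. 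This bookkeeping is routine but must be done carefully for each of $k=3$, $k=4$, and $k\geq 6$ using the corresponding choice of $e^*$ from Construction \ref{H construction}.

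For the saturation direction, let $f\in E(\overline{H^{(k)}_{n,m}})$. The key case split is on whether $f$ lies inside a single component or joins two components. If $f$ is contained in a single copy of $T^{(k)}_m$ that was not modified, then $T^{(k)}_m + f$ contains a Berge-$P_m$ directly by Lemma \ref{tree saturation lemma}. If $f$ is contained in the modified component $T'$, I would re-run the argument of Lemma \ref{tree saturation lemma}: the absorbed leaves and the edge $e'$ only add structure, so the same path-extension arguments (traverse a longest $P_{m-1}$, jump across $f$, and descend to a leaf) still apply; the new leaves actually help because they give additional short branches to descend into. The main work is to confirm that no vertex added during absorption creates an exceptional configuration that the original proof ruled out — in particular that $T'$ still has no branch vertex / branch edge closer together than in $T^{(k)}_m$.

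The remaining, and I expect the genuinely delicate, case is when $f$ joins two distinct components $C_1, C_2$ of $H^{(k)}_{n,m}$ (at least one a full copy of $T^{(k)}_m$). Pick $x\in f\cap V(C_1)$ and $y\in f\cap V(C_2)$. In $T^{(k)}_m$ every vertex $v$ lies on a path of length at least $m-2$ to a leaf in at least one direction; more precisely, writing $d_i(v)$ for the distance from $v$ to a leaf in branch $i$, every vertex has $\sum$ of the two largest such distances equal to $m-2$ (or $m-1$ if $d(v)=1$). Choosing $x$ and $y$ to each reach out along their \emph{longest} available branch, we get a path in $C_1$ from a leaf to $x$ of length at least $\lceil (m-2)/2\rceil$, then the edge $f$, then a path in $C_2$ from $y$ to a leaf of length at least $\lceil (m-2)/2\rceil$; since these live in disjoint components they share no vertices or edges, so their concatenation has length at least $2\lceil (m-2)/2\rceil + 1 \geq m-1$, giving a Berge-$P_m$. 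The obstacle here is the edge-case accounting: when $m$ is odd and a vertex sits off-center one of the two half-paths can be as short as $(m-3)/2$, so I must check that the partner vertex in the other component compensates — this is where I would use that in a \emph{leaf} edge one gets the extra $+1$ (total $m-1$ rather than $m-2$), and that at worst the two halves sum to $(m-3)/2 + (m-3)/2 + 1 = m-2 < m-1$ only if \emph{both} endpoints are simultaneously worst-case, which the structure of $T^{(k)}_m$ (and the freedom to pick $x,y$ anywhere in $f$) prevents. Handling this parity corner, and the analogous corner when $C_2$ is the modified component $T'$, is the crux; everything else is a direct adaptation of Lemma \ref{tree saturation lemma}.
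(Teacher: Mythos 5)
Your overall decomposition (Berge-$P_m$-freeness, then saturation, split into within-one-component and between-two-components) is exactly the paper's, and the between-components argument is essentially correct. The one genuine gap is in the freeness check for $T'$: the claim that ``any path through $e'$ can be rerouted through $e^*$'' only disposes of a longest path $P$ that uses $e'$ but \emph{not} $e^*$. Since $e'$ and $e^*$ are distinct edges, $P$ may well use both, and then rerouting is impossible. That case requires a separate observation (which the paper supplies): because $e'$ is incident to the same edges as $e^*$, any Berge-path using both of them is forced to have one endpoint inside $e'\cup e^*$ (for $k=3$, both degree-$3$ vertices of $e'$ are consumed by the two edges; for $k\geq 4$, $P$ must use the degree-$3$ vertex of $e^*\cap e'$ and the adjacent center edge), so such a $P$ has length at most $\lfloor m/2\rfloor+2 < m-1$ when $m\geq 10$. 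Without this you have not ruled out that $T'$ contains a Berge-$P_m$.

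By contrast, the parity ``crux'' you flag in the between-components case is not actually a difficulty. The fact you quote — every vertex $v$ of $T^{(k)}_m$ (or $T'$) lies on some Berge-$P_{m-1}$ — already means that from $v$ there is a path of length at least $\lceil (m-2)/2\rceil$ in one direction (take the longer half of that $P_{m-1}$); the longer side can never drop to $(m-3)/2$, because $\lceil (m-2)/2\rceil = (m-1)/2$ when $m$ is odd. So concatenating two such half-paths through the new edge $f$ always gives $2\lceil (m-2)/2\rceil + 1\geq m-1$, with no special handling for $T'$ or for an off-center $x$. This is exactly the paper's one-line argument. Finally, your within-$T'$ saturation sketch (``the absorbed leaves and $e'$ only add structure, so the same path-extension arguments still apply'') is the right idea but needs the case work the paper gives: if the new edge $e$ contains vertices from two new leaves, extend a $P_{m-1}$ ending at one of them; if $e$ meets $e'$ but not $e^*$, note that $e'$ plays the role of $e^*$; and if $e$ meets both $e'$ and $e^*$, extend a $P_{m-1}$ through $e^*$ by one edge using $e$ and then $e'$.
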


\begin{proof}
 Fix $k$ and $m$. Let $H=H^{(k)}_{n,m}$. $H$ consists of several components isomorphic to $T^{(k)}_m$, and one component $T'$, which is a copy of $T^{(k)}_m$ that may have been modified by adding copies of a particular leaf at the vertex $v'$ and possibly adding an edge near the center called $e'$. 
 
 We first show $H$ contains no Berge-$P_m$. Indeed, from Lemma \ref{tree saturation lemma} we know that no component isomorphic to $T^{(k)}_m$ contains a Berge-$P_m$, so we can restrict our attention to $T'$. Since there already was a leaf at $v'$, any added leaves could not create a Berge-$P_m$. If $e'\in E(H)$, consider a longest path, $P$ in $T'$ using $e'$.  Let $e^*$ be as described in Construction \ref{H construction}. If $P$ does not use $e^*$, then we can replace $e'$ with $e^*$ to create a path that only uses edges in $T^{(k)}_m$, with the same length, and thus is not a Berge-$P_m$. On the other hand if $P$ uses both $e^*$ and $e'$, for $k=3$, both vertices of degree 3 in $e'$ will be used by $P$, and so one end of $P$ must be in $e^*$ or $e'$. Similarly, for $k=4$ or $k\ge6$, $P$ must use the vertex of degree 3 in $e^*\cap e'$ and the other center edge that is incident to both $e^*$ and $e'$, and so one end of $P$ must be $e^*$ or $e'$. Thus, if $P$ uses both $e^*$ and $e'$, $P$ has length at most $\FL{\frac{m}{2}}+2$, which for $m\ge 10$ is less than $m-1$. Thus $H$ contains no Berge-$P_m$.

Now we will show that for all $e\in E(\overline{H})$, $H+e$ contains a Berge-$P_m$. By Lemma \ref{tree saturation lemma}, each $T^{(k)}_m$ is saturated, and so any edge added that is contained completely in one of the $T^{(k)}_m$ results in a Berge-$P_m$. It remains to show that $T'$ is saturated, and that any edge added between components results in a Berge-$P_m$. 

Consider adding an edge $e\in \overline{T'}$ to $T'$. If $e$ contains only vertices in the underlying $T^{(k)}_m$, then $T'+e$ contains a Berge-$P_m$. If $e$ contains vertices from more than one leaf incident with $v'$, any Berge-$P_{m-1}$ that has one of those leaves as a terminal edge can be extended in length by $1$, thus we can assume $e$ is incident with at most one leaf incident with $v'$, and without loss of generality, we can assume that leaf is the one in $T^{(k)}_m$. Thus, the only case we need still check is when $e$ contains a vertex in $e'$ that is not in the underlying $T^{(k)}_m$.
	
	If $e$ contains distinct vertices in $e^*$ and $e'$, then any Berge-$P_{m-1}$ that contains $e^*$ in the underlying $T^{(k)}_m$ can be extended to a Berge-$P_m$ containing all the previous edges and both $e'$ and $e^*$ since $e'$ is incident to the same edges as $e$. Otherwise, $e'$ acts like $e^*$, so any edge incident with vertices from $e'$ will create a Berge-$P_m$ just as if the edge was incident with vertices from $e^*$. Thus, $T'$ is saturated.
	
Finally, we show that any edge added between components results in a Berge-$P_m$. 
If we add an edge between any two components of $H$, then since every vertex of $T^{(k)}_m$, or $T'$ is in some Berge-$P_{m-1}$, each vertex is the initial vertex of a path of length at least $\CL{\frac{m-2}2}$, and so we can construct a path of length at least $2\CL{\frac{m-2}2}+1\geq m-1$. 
Thus, in all cases, the addition of an edge creates a Berge-$P_m$, so $H$ is Berge-$P_m$-saturated.
\end{proof}

We have constructed Berge-$P_m$ saturated hypergraphs. In Section \ref{section lower bound}, we will show that these hypergraphs are  edge minimal.

\section{Lower Bound on $\mathbf{a^{(k)}_m}$}\label{section lower bound}

This section will establish a lower bound on $a^{(k)}_m$, the fewest number of edges in any $k$-uniform Berge-$P_m$-saturated linear tree on at least $k+1$ vertices for $k=3$, $k=4$ and $k\geq 6$. The structure of such trees is highly dependent on the value of $k$, so this section provides proofs for each of three cases: $k=3$, $k=4$, and $k\geq 6$. These proofs are very similar to each other, but have non-trivial differences.
	
	All cases rely on Lemmas~\ref{twoedge} and \ref{edge in m-3}. Furthermore, in all cases we need to rule out the possibility that a minimal Berge-$P_m$-saturated tree does not contain a Berge-$P_{m-1}$. This is done in Lemmas \ref{no P_{m-1} branching structure}, \ref{no P_{m-1} counting lemma} and \ref{no P_{m-1} final count lemma}. Aside from these lemmas, the proof for each case is independent of the others. If the reader does not wish to read all three cases, the authors recommend one of the following two reading paths in addition to the five lemmas above:
	
	The simplest case to understand is $k=3$. This case follows from Lemmas \ref{k=3 P_{m-1} branching lemma} and \ref{k=3 P_{m-1} counting lemma} and concludes in Theorem~\ref{k=3 P_{m-1} final count theorem}. 
	
	If the reader instead wishes to read the most general case of $k\geq 6$, this consists of Lemmas \ref{k>= 6 P_{m-1} branching lemma} and \ref{k>= 6 P_{m-1} counting lemma} and concludes in Theorem~\ref{k>=6 P_{m-1} final count theorem}. The reader may also wish to read each of the paragraphs at the beginning of Sections \ref{section branching lemmas}, \ref{section counting lemmas} and \ref{section central structure} as these paragraphs foreshadow what follows in each subsection.
	
	Now we can begin to establish the lower bound. Before we prove the lower bound on $a^{(k)}_m$, we need to develop a few elementary structural lemmas and make a few observations about Berge-$P_m$-saturated graphs.

\begin{lemma}\label{twoedge}
	Let $k\geq 3$. If $H$ is a $k$-uniform Berge-$P_m$-saturated linear tree on $n\geq k+1$ vertices, and $e_1$ and $e_2$ are a pair of adjacent edges in $H$, then there is a pair of vertex-disjoint paths of length $\alpha$ and $\beta$ that start at vertices in $e_1\cup e_2$ and do not use either edge $e_1$ or $e_2$ such that $\alpha+\beta\geq m-4$.
\end{lemma}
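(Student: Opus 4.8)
\textbf{Proof proposal for Lemma~\ref{twoedge}.}

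The plan is to use Berge-$P_m$-saturation to force long paths out of the endpoints of $e_1$ and $e_2$, and then combine them. Since $H$ is a linear tree on $n \geq k+1$ vertices and $e_1, e_2$ are adjacent, write $e_1 \cap e_2 = \{w\}$ (linearity forces them to share exactly one vertex). The key observation is that if $u$ is a vertex of $H$ that lies in no edge other than possibly $e_1$ or $e_2$, and the longest path in $H$ starting at $u$ and avoiding $e_1, e_2$ is ``short'', then we can add a new edge at $u$ spanning $k-1$ fresh vertices (or vertices far away) without creating a Berge-$P_m$, contradicting saturation. First I would make this precise: for each vertex $x \in (e_1 \cup e_2)$, let $f(x)$ denote the maximum length of a path in $H - \{e_1, e_2\}$ (the subhypergraph with those two edges deleted) that starts at $x$; here deleting $e_1, e_2$ splits $H$ into at most $2k-1$ components, one hanging off each vertex of $e_1 \cup e_2$ (the component at $w$ being a single vertex once $e_1,e_2$ are removed, since $H$ is a tree).

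The main step is a saturation argument. Suppose for contradiction that for every pair of vertex-disjoint paths $P_\alpha, P_\beta$ as in the statement, starting in $e_1 \cup e_2$ and avoiding $e_1, e_2$, we have $\alpha + \beta \leq m-5$. I want to add an edge $e$ to $H$ that creates no Berge-$P_m$. Consider adding $e$ consisting of a leaf-vertex $a$ of $e_1 \setminus e_2$ (a degree-one vertex, which exists if the component of $H - \{e_1,e_2\}$ at some such vertex is trivial; otherwise pick $a$ to be any vertex whose attached component is small) together with a degree-one vertex $b$ of $e_2 \setminus e_1$ and $k-2$ additional vertices chosen to minimize the damage. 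Then I would bound the length of any Berge-$P_m$ in $H + e$: such a path must use $e$, and entering/leaving $e$ it connects two paths rooted in $e_1 \cup e_2 \cup e$; using the linear-tree structure and the assumed bound $\alpha + \beta \leq m-5$, together with the fact that $e_1$ and $e_2$ can each contribute at most one edge to the path, the total length is at most $(m-5) + O(1) < m-1$. Tuning the additive constant is exactly where care is needed — the statement asks only for $\alpha + \beta \geq m-4$, i.e. that a Berge-$P_{m-2}$ (using $e_1, e_2$ as a spine plus the two branches) nearly exists, so the slack is tight.

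The hard part will be handling the degenerate configurations: when one or both of $e_1, e_2$ have all their non-$w$ vertices of degree one (so the tree is ``small'' near $e_1 \cup e_2$), and when $n$ is barely larger than $k+1$. In these cases the component structure of $H - \{e_1, e_2\}$ is nearly trivial and one must check directly that adding an edge among the leaf vertices, or among leaf vertices and one vertex deeper in the tree, produces no Berge-$P_m$ under the contrary assumption. I would organize this as a short case analysis on how many of the $k-1$ ``outer'' vertices of $e_i$ have degree $\geq 2$. In each case, either we directly find the desired paths $P_\alpha, P_\beta$ (by taking the two longest branches hanging off $e_1 \cup e_2$ on ``opposite sides'' of the spine $e_1 e_2$), or we exhibit an addable edge contradicting saturation. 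A subtle point worth isolating as a sub-claim: the two longest branches can always be chosen vertex-disjoint because, in a linear tree, distinct maximal branches off the vertex set $e_1 \cup e_2$ meet only within $e_1 \cup e_2$ itself, and a path ``starting at a vertex in $e_1 \cup e_2$'' is allowed to start at the branch's root. I expect the whole argument to run in well under a page once the case split is fixed.
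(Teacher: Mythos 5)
Your high-level plan is the right one -- add an edge, invoke saturation to force a Berge-$P_m$, and bound its length by the tree structure near $e_1 \cup e_2$ -- but as written the proposal has two substantial gaps, both of which the paper's (much shorter) argument avoids by a single clean choice.

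\textbf{The choice of the added edge.} You propose adding $e$ consisting of a leaf of $e_1$, a leaf of $e_2$, and ``$k-2$ additional vertices chosen to minimize the damage'' (or ``fresh vertices,'' which is not available since $V(H)$ is fixed). This is where the argument cannot be made to close. If any of those $k-2$ vertices lies outside $e_1 \cup e_2$, the new edge $e$ provides a direct hyperedge connection into some subtree far from $e_1, e_2$, and the Berge-$P_m$ forced by saturation may traverse that subtree and need not decompose into two branches hanging off $e_1 \cup e_2$ plus a bounded core; the inequality $\alpha+\beta+O(1)\ge m-1$ you want does not follow, and ``tuning the additive constant'' will not fix it because the path simply does not have the required shape. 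Moreover, $e_1$ and $e_2$ may have no degree-one vertices at all, so your starting choice of $a, b$ need not exist. The paper instead takes $e$ to be \emph{any} $k$-subset of $e_1\cup e_2$ distinct from $e_1, e_2$ (which exists since $|e_1\cup e_2| = 2k-1 > k$). With that choice, the only edges of $H+e$ having two or more vertices inside $e_1\cup e_2$ are $e, e_1, e_2$ (any other such edge would create a cycle), so the consecutive ``core'' stretch of the Berge-$P_m$ has at most $3$ edges, and the prefix and suffix are paths avoiding $e_1, e_2$ and starting in $e_1\cup e_2$. That immediately gives $\alpha+\beta+3 \ge m-1$.

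\textbf{Vertex-disjointness.} Your sub-claim that ``the two longest branches can always be chosen vertex-disjoint'' does not follow from the tree structure alone: both longest branches hanging off $e_1\cup e_2$ could attach at the very same vertex $x$, in which case they share $x$. The paper closes this gap with a second saturation argument: if the only pair achieving $\alpha+\beta\ge m-4$ attaches at a common $x$, add $e'\subseteq(e_1\cup e_2)\setminus\{x\}$ (again a valid $k$-set), and note that no Berge-$P_m$ through $e'$ can use both paths, since that would revisit $x$. (Alternatively, with $e\subseteq e_1\cup e_2$ fixed, one can observe directly that the prefix and suffix of the Berge-$P_m$ lie in distinct components of $H-e_1-e_2$, since those components meet the core in single distinct vertices; but in either case it is an argument, not an automatic consequence of linearity as you suggest.)

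In short: the skeleton is right, but commit to $e\subseteq e_1\cup e_2$, drop the ``fresh/far-away vertex'' idea entirely, and replace the unjustified disjointness claim with an argument -- either the paper's second addable edge, or the direct component-separation observation.
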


\begin{proof}
	Let $e_1$ and $e_2$ be a pair of adjacent edges. Let $e$ be any edge in $\overline{H}$ such that each vertex of $e$ is in either $e_1$ or $e_2$. Consider $H+e$. Due to saturation, there exists a Berge-$P_m$ in $H+e$ that uses the edge $e$. In addition to $e$, this Berge-$P_m$ could possibly use edges $e_1$, $e_2$ and edges from two edge-disjoint paths leaving $e_1$ or $e_2$, but no more edges. Thus if the longest such paths are of length $\alpha$ and $\beta$, we must have that $\alpha+\beta+3\geq m-1$ or $\alpha+\beta\geq m-4$. 
	
	To see that these two paths are vertex-disjoint, assume that the only pair of paths long enough to satisfy the length requirement attach at the same vertex $x\in e_1\cup e_2$. Consider an $e'\subseteq (e_1\cup e_2)\setminus\{x\}$. Then, $H+e'$ will not contain a Berge-$P_m$ since we cannot use $e'$ while also using both paths starting at $x$, since this would require us to visit $x$ twice in our path.
\end{proof}

\begin{observation}\label{Pm-2}
	Let $k\geq 3$. If $H$ is a $k$-uniform Berge-$P_m$-saturated linear tree on $n\geq k+1$ vertices, then $H$ contains a Berge-$P_{m-2}$ (a path of length $m-3$).
\end{observation}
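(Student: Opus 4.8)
The plan is to obtain this immediately from Lemma~\ref{twoedge} by gluing the two long paths it produces through a pair of adjacent edges. First I would note that the hypotheses force $H$ to have at least two edges: a $k$-uniform linear tree on $n$ vertices has exactly $(n-1)/(k-1)$ edges, and $n\geq k+1$ gives $(n-1)/(k-1)\geq k/(k-1)>1$. Since $H$ is connected, some vertex has degree at least two, so $H$ contains a pair of adjacent edges $e_1,e_2$; write $a=e_1\cap e_2$. Applying Lemma~\ref{twoedge} yields vertex-disjoint Berge paths $P_1,P_2$ of lengths $\alpha,\beta$ with $\alpha+\beta\geq m-4$, starting at distinct vertices $x_1,x_2\in e_1\cup e_2$ and using neither $e_1$ nor $e_2$.

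The key structural point is that deleting the two adjacent edges $e_1,e_2$ from the tree $H$ separates the $2k-1$ vertices of $e_1\cup e_2$ into pairwise distinct components, because the unique path in $H$ between any two of these vertices must use $e_1$ or $e_2$. Since $P_1$ and $P_2$ avoid $e_1$ and $e_2$, each lies entirely in the component of its starting vertex; as $x_1\neq x_2$ these components differ, so $P_1$ and $P_2$ are edge-disjoint (not merely vertex-disjoint), and furthermore $a$ lies on neither $P_1$ nor $P_2$ unless $a\in\{x_1,x_2\}$. Now let $Q$ be the unique $x_1$--$x_2$ path in $H$; since $x_1,x_2\in e_1\cup e_2$ it is either a single edge (when $x_1,x_2$ share an $e_i$) or the two-edge path $x_1,e_1,a,e_2,x_2$, so $|E(Q)|\in\{1,2\}$ and its only possible interior vertex $a$ is used by neither $P_1$ nor $P_2$.

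Concatenating $P_1$ (traversed so that it ends at $x_1$), then $Q$, then $P_2$ therefore gives a bona fide Berge path of $H$: all its vertices are distinct (disjointness of $P_1,P_2$, together with $a$ being new when it appears) and all its edges are distinct (edge-disjointness of $P_1,P_2$, neither of which contains $e_1$ or $e_2$). Its length is
\[
\alpha+|E(Q)|+\beta\ \geq\ (m-4)+1\ =\ m-3,
\]
and truncating it produces a Berge-$P_{m-2}$, so $H$ contains a Berge-$P_{m-2}$. I do not anticipate a real obstacle: the only care needed is the bookkeeping guaranteeing that the concatenation is a legitimate Berge path (which is exactly what the ``distinct components of $H-\{e_1,e_2\}$'' observation provides) together with a minor case split on whether one of $x_1,x_2$ is the shared vertex $a$; one may also remark in passing that this path has length at most $m-1$, so it is consistent with $H$ being Berge-$P_m$-free.
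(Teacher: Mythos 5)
Your proof is correct and follows essentially the same route as the paper, which simply remarks that the observation ``follows immediately from applying Lemma~\ref{twoedge} to any pair of edges in $H$.'' You have supplied the routine details the paper leaves implicit: the existence of a pair of adjacent edges, and the bookkeeping (via the components of $H-\{e_1,e_2\}$) showing that the two vertex-disjoint paths can be concatenated through $e_1$ and $e_2$ into a single Berge path of length at least $m-3$.
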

This follows immediately from applying Lemma~\ref{twoedge} to any pair of edges in $H$.

\begin{lemma}\label{edge in m-3}
	Let $k\geq 3$. If $H$ is a $k$-uniform Berge-$P_m$-saturated linear tree on $n\geq k+1$ vertices, then every edge in $H$ that is not a leaf is in a Berge-$P_{m-2}$. 
\end{lemma}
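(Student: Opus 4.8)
\textbf{Proof proposal for Lemma~\ref{edge in m-3}.}

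The plan is to argue by contradiction and extract a long pair of paths from the two endpoints of the offending edge. Suppose $e=\{w_1,\dots,w_k\}$ is an edge of $H$ that is not a leaf and is contained in no Berge-$P_{m-2}$. Since $e$ is not a leaf, it is adjacent to at least two other edges, and in particular we may pick two vertices $w_i,w_j\in e$ each of degree at least $2$, lying in edges $f_i\ni w_i$ and $f_j\ni w_j$ with $f_i,f_j\neq e$; because $H$ is a linear tree these can be chosen so that $f_i$ and $f_j$ are distinct and the paths hanging off them in the two directions away from $e$ are vertex-disjoint. First I would apply Observation~\ref{Pm-2} to get a Berge-$P_{m-2}$ somewhere in $H$, and then use the saturation hypothesis together with Lemma~\ref{twoedge} to control the lengths of the branches emanating from $e$.

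The main step is this. Consider adding to $H$ an edge $e'$ whose vertex set lies inside $e$ together with exactly the two ``far'' vertices of $f_i$ and $f_j$ (or simply inside $f_i\cup f_j$, chosen so that $H+e'$ is still linear and $e'\notin E(H)$); saturation gives a Berge-$P_m$ in $H+e'$ using $e'$. This path uses $e'$, possibly $e$, possibly $f_i$ and $f_j$, and two edge-disjoint paths $P$ and $Q$ leaving from the two ends; so if $\alpha=|E(P)|$ and $\beta=|E(Q)|$ then $\alpha+\beta+4\geq m-1$, hence $\alpha+\beta\geq m-5$. The crucial point is that $P$ and $Q$, together with the edge $e$ (and an edge or two from $f_i,f_j$), can be reassembled into a single path \emph{through} $e$: walk along $P$ to its endpoint near $e$, cross into $e$, cross $e$, and continue along $Q$. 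Since $P$ and $Q$ are edge-disjoint from each other and from $e$, this yields a Berge path using $e$ and at least $\alpha+\beta+1\geq m-4$ edges, i.e.\ a Berge-$P_{m-3}$; in fact by being slightly more careful about how many of the connecting edges $f_i,f_j$ are absorbed (as in the vertex-disjointness argument of Lemma~\ref{twoedge}), one gets $\alpha+\beta+2$ or $\alpha+\beta+3$ edges through $e$, which is at least $m-3$, i.e.\ a Berge-$P_{m-2}$ containing $e$ — contradicting our assumption. One needs the linearity of $H$ here to ensure that the branch paths $P$, $Q$ really are internally vertex-disjoint from $e$ and from each other, so that the concatenation is a genuine Berge path with distinct vertices.

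The one obstacle to watch is the bookkeeping of the additive constants: how many of the edges $e,f_i,f_j,e'$ get used by the Berge-$P_m$ in $H+e'$ versus how many we can recycle into the path through $e$. The naive count ($\alpha+\beta\geq m-5$ from the added path, then $\alpha+\beta+1$ in the recovered path, giving only $m-4$) is one short of what is claimed, so the argument must squeeze out one more edge. The fix, exactly as in Lemma~\ref{twoedge}, is to observe that if the Berge-$P_m$ in $H+e'$ used only the single edge $e$ as a ``bridge'' then $e'$ was superfluous and could have been avoided; thus the Berge-$P_m$ genuinely uses $e'$ together with $e$ (and part of $f_i\cup f_j$), which forces $\alpha+\beta$ up by the needed amount, or else the branches $P,Q$ themselves already overshoot $m-3$ and we are done directly. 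Carefully separating the case $d(w_i,w_j)$ small (the two branches nearly meet) from the generic case handles this; the generic case is routine once the concatenation picture is set up, and the degenerate case is a finite check using $m\geq 10$.
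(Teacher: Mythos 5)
Your proposal takes a genuinely different route from the paper, but it has a real gap and a miscount that you yourself partly flag.

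On the route: the paper never adds an edge $e'$ in $f_i\cup f_j$. It simply applies Lemma~\ref{twoedge} twice, once to the adjacent pair $(e,e')$ and once to $(e,e'')$ (the two neighbors of $e$ on the long path through $e$), then does a short case analysis on where the resulting branch paths attach. If any of the four branches attaches in $e\setminus e'$ (resp.\ $e\setminus e''$), that branch together with its partner and $e$ already gives a path of length $\geq m-3$ through $e$. Otherwise all four attach inside $e'\cup e''$; taking the longer branch from each application (each of length $\geq\lceil(m-4)/2\rceil$) and joining them through $e$ --- which works because $e'$ and $e''$ lie in different components of $H-e$, so the two chosen branches are vertex-disjoint --- gives a path through $e$ of length at least $2\lceil(m-4)/2\rceil+1\geq m-3$. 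No auxiliary edge is added.

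On your version: first, the bound $\alpha+\beta\geq m-5$ is an over-count of bridge edges in the wrong direction. A Berge path is a path: $e'$ has at most two neighbors in it, so the Berge-$P_m$ in $H+e'$ uses $e'$, at most two of $\{e,f_i,f_j\}$ as immediate neighbors, and two branch paths, giving $\alpha+\beta+3\geq m-1$, i.e.\ $\alpha+\beta\geq m-4$ (this is exactly the bookkeeping in the paper's proof of Lemma~\ref{twoedge}). So you are actually better off than you thought. But the genuine gap is the reassembly step. You assert that $P$ and $Q$ ``can be reassembled into a single path through $e$'' of length $\geq\alpha+\beta+1$, but you never rule out the case where both $P$ and $Q$ attach on the same side of $e$ --- for instance both at vertices of $f_i\setminus e$. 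In that situation concatenating $P$, $f_i$, $Q$ is a valid path but it need not pass through $e$ at all, and forcing it through $e$ would require re-entering $f_i$ or $e$, which is not allowed. Your ``fix'' paragraph (``if the Berge-$P_m$ used only $e$ as a bridge then $e'$ was superfluous, hence $\alpha+\beta$ is forced up'') is a plausibility argument, not a proof: $e'$ is never superfluous in $H+e'$ (it must be used since $H$ is $P_m$-free), so that dichotomy doesn't extract an extra edge, and the subsequent appeal to ``a finite check using $m\geq10$'' is not a substitute for handling the same-side attachment case. The paper resolves exactly this difficulty by applying Lemma~\ref{twoedge} to both neighbors of $e$, which guarantees two branches on opposite sides of $e$ that are automatically vertex-disjoint; I would recommend restructuring your argument along those lines rather than trying to patch the single-application version.

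Two smaller issues: the requirement that $H+e'$ remain linear is unnecessary (saturation allows adding any missing edge), and the alternative choice ``$e'$ simply inside $f_i\cup f_j$'' does not work in general, since the resulting Berge-$P_m$ may stay entirely inside the two branches hanging off $f_i$ and $f_j$ and tell you nothing about $e$.
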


\begin{proof}
Let $e$ be an edge of $H$ that is not a leaf. Since $e$ is not a leaf, it must be in a path of length at least $3$, say with neighboring edges $e'$ and $e''$.
Apply Lemma~\ref{twoedge} to edge $e$ and $e'$. Then we have paths of length $\alpha_1$ and $\beta_1$ leaving distinct vertices in $e\cup e'$ with $\alpha_1+\beta_1\geq m-4$. If either of these paths attach at a vertex in $e\setminus e'$, then we are done since regardless of where the second path attaches, we can create a path using $e$ and both the path of length $\alpha_1$ and $\beta_1$, which is of length at least $m-3$.

Otherwise both paths attach to vertices in $e'$.
We now apply Lemma~\ref{twoedge} to $e$ and $e''$ to get paths of length $\alpha_2$ and $\beta_2$ leaving  distinct vertices in $e\cup e''$
If either one leaves a vertex in $e\setminus e''$, then we are done by the argument in the preceding paragraph, so assume both attach to vertices in $e''$.
Without loss of generality let $\alpha_1\geq \beta_1$ and $\alpha_2\geq \beta_2$ be the longer paths generated by the two applications of Lemma~\ref{twoedge}.
We note that $\alpha_i\ge\CL{\frac{m-4}{2}}$ for $i\in\{1,2\}$.
Thus, there is a path through $e$ that traverses each of these longer paths, so it has length at least $\CL{\frac{m-4}{2}}+\CL{\frac{m-4}{2}}+1\ge m-3$.
\end{proof}

\subsection{Branching Lemmas}\label{section branching lemmas}

If we consider a loose path on $m-2$ edges, it is clear this will not be Berge-$P_m$-saturated since we can add edges that contain only vertices of degree $2$ or edges that contain vertices of degree $1$ from two non-consecutive non-leaf edges. Due to these restrictions, any path in a Berge-$P_m$-saturated graph must branch fairly often. This section establishes lemmas that characterize the minimal necessary branching of a Berge-$P_m$-saturated tree. We will find that the minimal amount of branching depends on if the path we are currently considering contains a Berge-$P_{m-1}$ or not.

\subsubsection{Branching with a Berge-$\mathbf{P_{m-1}}$}

If $H$ contains a Berge-$P_{m-1}$, the amount of branching necessary to retain saturation is dictated by the uniformity $k$. Our first lemma is true for all $k\geq 3$, but is trivial for $k=3$ and can be refined for $k=4$. Also in this section is the refinement for $k=4$ and a non-trivial branching lemma for $k=3$.

\begin{lemma}\label{k>= 6 P_{m-1} branching lemma}
	Let $k\geq 3$. Let $H$ be a $k$-uniform Berge-$P_m$-saturated linear tree on $n\geq k+1$ vertices. Let $e_1$, $e_2$ and $e_3$ be sequential edges in some Berge-$P_{m-1}$ $P$. Let there be $\alpha$ edges in $P$ preceding $e_1$ attached at a vertex $x\in e_1$, and $\beta$ edges following $e_3$ with $\alpha\geq \beta\geq 0$. Let $y\in e_1\cap e_2$ and $z\in e_2\cap e_3$. Then one of the following is satisfied:
	
	(a) There is a path of length at least $\beta$ which is edge-disjoint from $P$ that attaches to a vertex in $e_2$
	
	or
	
	(b) there are paths of length at least $\beta$ edge-disjoint from $P$ attaching to at least $k-1$ vertices in $X=(e_1\cup e_3)\setminus e_2$.
\end{lemma}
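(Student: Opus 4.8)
The plan is to argue by contradiction: suppose neither (a) nor (b) holds, and construct an edge $e \subseteq X \cup \{\text{some vertex of } e_2\}$ whose addition to $H$ fails to create a Berge-$P_m$, contradicting saturation. The starting point is that $P$ is a Berge-$P_{m-1}$, so $\alpha + 3 + \beta = m-2$, i.e. $\alpha + \beta = m-5$; since $\alpha \ge \beta$ we have $\alpha \ge \lceil (m-5)/2 \rceil$. The two key structural observations are: first, because $H$ is a linear tree, any path edge-disjoint from $P$ that starts inside $e_1 \cup e_2 \cup e_3$ must ``leave'' through a single vertex and cannot re-enter $P$; second, since (a) fails, no edge-disjoint path of length $\ge \beta$ hangs off $e_2$, and since (b) fails, fewer than $k-1$ of the vertices in $X = (e_1 \cup e_3)\setminus e_2$ admit edge-disjoint pendant paths of length $\ge \beta$. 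So at least one vertex $w \in X$ has the property that every path leaving it edge-disjointly from $P$ has length $< \beta$ — unless $w$ is an endpoint-side vertex already carrying the $\alpha$-path or the $\beta$-path of $P$ itself. I would set aside the (at most two) vertices of $X$ that sit on $P$ and are the attachment points of the $\alpha$- and $\beta$-tails, and work with the remaining ``bad'' vertices.

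Next I would choose the new edge $e$ to consist of the bad vertex $w \in X$ together with $k-1$ other vertices drawn from $e_1 \cup e_2 \cup e_3$ (this is possible since $|e_1 \cup e_2 \cup e_3| = 3k-3 \ge k$ for $k \ge 3$, and in fact we have plenty of room). The point is to choose the other $k-1$ vertices so that adding $e$ gives no new long path. Consider any Berge-$P_m$ in $H + e$ using $e$. Since $H$ is Berge-$P_m$-free, the path must use $e$; trace it starting from $e$. Because $H$ is a linear tree, once the path passes through $e$ it splits into at most two arcs, each of which, after possibly traversing some subset of $\{e_1,e_2,e_3\}$, must run off into a pendant path edge-disjoint from $P$ or along a tail of $P$ itself. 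The longest available ``reach'' on the side through $w$ is bounded by $\max(\alpha,\beta) = \alpha$ via the $P$-tail plus the three edges $e_1,e_2,e_3$, but the second arc, forced to leave through one of the other chosen vertices, can reach at most $\max(\alpha, \beta) = \alpha$ as well, and more carefully at most $\beta$ extra when $w$ is bad. Tallying: the total length is at most $1 + 3 + \alpha + (\text{something} < \beta)$ which falls short of $m-1 = \alpha + \beta + 4$. Getting the bookkeeping exactly right — in particular making sure the two arcs cannot \emph{both} use the long $\alpha$-tail (they can't, since that would repeat the vertex $x$), and handling the degenerate cases where $e_1$ or $e_3$ is a leaf or where $\beta = 0$ — is where the real care is needed.

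The main obstacle I anticipate is the case analysis forced by the linear-tree structure near $e_2$: depending on how many of the $k-1$ companion vertices of $e$ lie in $e_2$ versus in $e_1 \setminus e_2$ versus in $e_3 \setminus e_2$, the added edge $e$ can serve as a ``shortcut'' in several topologically distinct ways, and for each I must verify that no configuration yields a path of length $\ge m-1$. The clean way to organize this is to observe that \emph{if} $e$ contained two vertices of $X$ that both carried long pendant paths, or a vertex of $e_2$ carrying a long pendant path, we'd immediately get a Berge-$P_m$; so the failure of (a) and (b) is exactly the statement that we can \emph{avoid} this, i.e. we can always pick the companion vertices to be ``short'' ones, and then the length bound $1+3+\alpha+(\beta-1) = m-2 < m-1$ goes through. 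I would also invoke Lemma~\ref{twoedge} or the linearity argument used in its proof to guarantee the pendant paths off $e$ attach at distinct vertices, ruling out the ``visit $x$ twice'' pathology exactly as in that lemma's proof. The remaining boundary cases ($k=3$, where $X$ has only $2k-2 = 4$ vertices and the lemma is noted to be trivial; small $\beta$) are checked by hand.
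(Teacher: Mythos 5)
Your high-level plan is the same as the paper's: negate (a) and (b), use the failure of (b) to locate vertices in $X=(e_1\cup e_3)\setminus e_2$ with no long pendant paths, bundle them into a new edge $e$, and derive a contradiction with saturation. But the proposal has a genuine gap in how $e$ is chosen, and the gap is fatal to the endgame bookkeeping.

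You propose taking one ``bad'' vertex $w\in X$ and filling out $e$ with $k-1$ companion vertices drawn from all of $e_1\cup e_2\cup e_3$, and you later discuss the cases according to how many companions lie in $e_2$. This is exactly what must \emph{not} be allowed. Suppose $e$ contains a vertex $u\in e\cap e_1$ and a vertex $v\in e\cap e_2\setminus\{y,z\}$, both ``short'' in your sense (note that (a) failing makes every vertex of $e_2$ short, so these are legal companions under your plan). Take $x\in e_1$ to be the attachment of the $\alpha$-tail of $P$ and $a_2\in e_3$ the attachment of its $\beta$-tail. Then the Berge path
\[
[\alpha\text{-tail}]\ \cdot\ x\ \cdot\ e_1\ \cdot\ u\ \cdot\ e\ \cdot\ v\ \cdot\ e_2\ \cdot\ z\ \cdot\ e_3\ \cdot\ a_2\ \cdot\ [\beta\text{-tail}]
\]
has length $\alpha+4+\beta=m-1$, i.e.\ it is a Berge-$P_m$ in $H+e$. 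So this $e$ does \emph{not} witness non-saturation, and your contradiction evaporates. The paper avoids this by requiring $e\subseteq X$, so that $e\cap e_2=\emptyset$; then in the ``one tail at $e_1$, one tail at $e_3$'' case the segment of the new path from $a_1\in e_1$ to $a_2\in e_3$ cannot use both $e$ and $e_2$ (there is no vertex of $e\cap e_2$ to hop through), and the length caps at $\alpha+\beta+3<m-1$.

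There is a related counting point you elide. The failure of (b) says at most $k-2$ vertices of $X$ carry long pendant paths, and $|X|=2k-2$ (using linearity: $e_1\cap e_3=\emptyset$ and $|e_1\cap e_2|=|e_2\cap e_3|=1$). Hence at least $(2k-2)-(k-2)=k$ vertices of $X$ are short, which is exactly enough to form $e$ entirely inside $X$. Your ``set aside the two attachment points, take the rest'' heuristic neither produces this count nor guarantees the chosen companions are short, so it does not substitute for the paper's argument. Once you fix the choice of $e$ to be $k$ short vertices of $X$, the remaining case analysis you sketch (both tails at $e_1$, both at $e_3$, one at each) does line up with the paper's, and the length arithmetic closes.
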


\begin{proof}
	For $k=3$, $|X|=|(e_1\cup e_3)\setminus e_2|=4$ and the subpaths of $P$ of length $\alpha$ and $\beta$ attach to $k-1=2$ vertices in $X$, so (b) holds.
	
	Now consider, $k\geq 4$. Observe that due to the length of $P$, $\alpha+\beta+3=m-2$. Toward a contradiction, assume that there is no path of length at least $\beta$ leaving $e_2$ that is edge disjoint from $P$, and there are at least $k$ vertices in $(e_1\cup e_3)\setminus \{y,z\}$ that do not have paths of length $\beta$ attached to them. Consider the edge $e$ made up of these $k$ vertices as shown in Figure~\ref{fig:H+e k>=6}. Since $H$ is saturated, $H+e$ contains a Berge-$P_m$ that uses $e$.

	\begin{figure}  [h]
		\begin{center}  
			\begin{tikzpicture}[line cap=round,line join=round,>=triangle 45,x=1.0cm,y=1.0cm]
			\clip(-2,-2) rectangle (10,1);
			\draw [rotate around={0.:(1.3,0.045)}] (1.30,0.045) ellipse (1.55cm and 0.39cm); 
			\draw [rotate around={0.:(3.88,0)}] (3.88,0.05) ellipse (1.55cm and 0.39cm); 
			\draw [rotate around={0.:(6.446,0.025)}] (6.446,0.025) ellipse (1.55cm and 0.39cm); 
			\begin{scriptsize}
			\draw[color=black] (1.2,.6) node {$e_1$};
			\draw[color=black] (3.73,.6) node {$e_2$};
			\draw[color=black] (6.28,.6) node {$e_3$};
			\draw [fill=black] (2.59,0) circle (1.5pt);
			\draw[color=black] (2.59,-0.376) node {$y$};
			\draw [fill=black] (5.16,0.0) circle (1.5pt);
			\draw[color=black] (5.16,-0.4246) node {$z$};
			\draw [fill=black] (0.048,0.0) circle (1.5pt);
			\draw[color=black] (0.226,0.0) node {$x$};
			\draw [fill=black] (7.7,0.0) circle (1.5pt);
			
			\draw (0.048,0) parabola (-2,-2);\draw[color=black] (-1,-.8) node {$\alpha$};
			\draw (7.7,0)parabola  (10,2);\draw[color=black] (8.7,.6) node {$\beta$};
			\draw (.7,0) .. controls (2.59,-2) and (5.16,-2) .. (7.2,-.1); 
			\draw (7.2,-.1) .. controls (6.8,.3) and (6.3,.3) .. (5.9,.1);
			\draw (5.9,.1) .. controls (5,-1.2) and (3,-1.5) .. (2,.1);
			\draw (2,.1) .. controls (1.4,.3) and (1,.3) .. (.7,0);
			\draw[color=black] (5,-1.5) node {$e$};
			\end{scriptsize}
			\end{tikzpicture}
			\end{center}   
			\caption{\label{fig:H+e k>=6}The graph $H+e$ as described in Lemma~\ref{k>= 6 P_{m-1} branching lemma}.}
			\end{figure}
			
			This Berge-$P_m$ can use at most all four of the edges $e_1, e_2, e_3$ and $e$ and two paths that each start at some vertex in $e_1\cup e_2\cup e_3$. By the maximality of $P$, these two paths are of length at most $\alpha$, and if one was of a length $\gamma<\beta$, then the longest path using $e$ is of length at most $\alpha+\gamma+4<\alpha+\beta+4=m-1$, a contradiction, so they are of length at least $\beta$. Thus, the Berge-$P_m$ cannot have used some path that entered at $e_2$ since there are no paths of length $\beta$ attached to a vertex in $e_2$. If the Berge-$P_m$ used two paths that both entered at $e_1$ or both entered at $e_3$, then the edge, $e_1$ or $e_3$ respectively would have to be used twice since the vertices in $e$ are not attached to paths of length $\beta$.
			
			Then the remaining possibility is that the Berge-$P_m$ uses one path entering at $e_1$, and a second path leaving at $e_3$. By the maximality of $P$, the first path is of length at most $\alpha$ and the second path at most $\beta$. Note that there is no way to traverse these paths while also using both $e$ and $e_2$, so this path is of length at most $\alpha+\beta+3<m-1$, a contradiction. In all cases we reach a contradiction, so either (a) or (b) occurs.
			\end{proof}

\begin{corollary}\label{k=4 P_{m-1} branching corollary}
	Let $H$ be a $4$-uniform Berge-$P_m$-saturated linear tree on $n\geq 10$ vertices. Let $e_1$, $e_2$ and $e_3$ be sequential edges in some Berge-$P_{m-1}$, say $P$. Let there be $\alpha$ edges in $P$ preceding $e_1$ attached at a vertex $x$, and $\beta$ edges following $e_3$ attached at a vertex $y$ with $\alpha\geq \beta\geq 0$. Then there is a path of length at least $\beta$, edge disjoint from $P$ that starts at a vertex in $(e_1\cup e_2\cup e_3)\setminus \{x,y\}$.
\end{corollary}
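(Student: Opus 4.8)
The plan is to read this off from Lemma~\ref{k>= 6 P_{m-1} branching lemma} with $k=4$, together with a short vertex count that places the path produced by that lemma inside $(e_1\cup e_2\cup e_3)\setminus\{x,y\}$.

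First I would fix notation. Since $H$ is linear, $e_1\cap e_2=\{p\}$ and $e_2\cap e_3=\{q\}$ for single vertices $p,q$, and since $H$ is a tree, $e_1\cap e_3=\emptyset$. Because $P$ traverses $\dots,e_1,e_2,e_3,\dots$ with its length-$\alpha$ tail attached to $e_1$ at $x$ and its length-$\beta$ tail attached to $e_3$ at $y$, consecutive edges of the Berge path use distinct representative vertices, so $x\ne p$ and $y\ne q$; hence $x\in e_1\setminus\{p\}$, $y\in e_3\setminus\{q\}$, and in particular $x,y\notin e_2$. Setting $X:=(e_1\cup e_3)\setminus e_2=(e_1\setminus\{p\})\cup(e_3\setminus\{q\})$, this gives $|X|=3+3=6$ and $\{x,y\}\subseteq X$. (If $\alpha=0$ then $\beta=0$ and the conclusion is trivial, so I may assume $\alpha\ge1$, which makes the representative vertex $x$ well defined.)

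Next I would apply Lemma~\ref{k>= 6 P_{m-1} branching lemma} with $k=4$ to the triple $e_1,e_2,e_3$. In outcome (a) there is a path of length at least $\beta$, edge-disjoint from $P$, attached to some $v\in e_2$; since $x,y\notin e_2$ this $v$ lies in $(e_1\cup e_2\cup e_3)\setminus\{x,y\}$, so it is the desired path. In outcome (b) there are paths of length at least $\beta$, edge-disjoint from $P$, attached to at least $k-1=3$ vertices of $X$; as $|\{x,y\}|\le2$, at least one of these vertices lies in $X\setminus\{x,y\}\subseteq(e_1\cup e_2\cup e_3)\setminus\{x,y\}$, and the path attached there completes the argument.

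I do not expect any real obstacle, since the corollary is essentially a specialization of the lemma; the one point that must be checked is the count in outcome (b), namely $(k-1)-|\{x,y\}|\ge(k-1)-2\ge1$ when $k=4$, so that a usable long-path vertex survives deleting $x$ and $y$. This is precisely where uniformity $4$ is needed (for $k=3$ one has $k-1=2$ and the surviving count can be $0$), which is why the $k=3$ case instead calls for its own branching lemma. If one wishes to double-check that a path attached at $v\in X\setminus\{x,y\}$ and edge-disjoint from $\{e_1,e_2,e_3\}$ is edge-disjoint from all of $P$, it suffices to note that the only edges of $P$ outside $\{e_1,e_2,e_3\}$ lie in the branches hanging at $x$ on $e_1$ and at $y$ on $e_3$, hence none meets the branch hanging at $v$.
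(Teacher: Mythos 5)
Your argument is correct and follows essentially the same route as the paper's proof: apply Lemma~\ref{k>= 6 P_{m-1} branching lemma} with $k=4$, dispatch outcome~(a) immediately since $x,y\notin e_2$, and in outcome~(b) use the pigeonhole count $k-1=3>2=|\{x,y\}|$ to find an attachment vertex in $X\setminus\{x,y\}$.

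The one point worth flagging is how outcome~(b) is read. You take the lemma's wording at face value, namely that all $k-1$ paths are edge-disjoint from $P$. The paper's proof is more cautious: it explicitly allows the two subpaths of $P$ hanging at $x$ and $y$ to be two of the three counted paths and then extracts a genuinely new, edge-disjoint path at the third vertex. This caution is warranted because the proof of Lemma~\ref{k>= 6 P_{m-1} branching lemma} only rules out $k$ or more vertices of $X$ lacking a path of length at least $\beta$ ``attached to them'' --- it does not directly rule out $k$ vertices lacking an \emph{edge-disjoint} such path. Since $x$ and $y$ always carry the $\alpha$- and $\beta$-subpaths of $P$, they are automatically among the $k-1$ vertices the lemma's proof actually certifies, so what survives for new branches is $k-3$, not $k-1$. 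For $k=4$ this still gives one new vertex, so both readings yield the corollary; but your phrasing ``$k-1=3$ paths edge-disjoint from $P$'' asserts slightly more than the lemma's proof delivers, and the paper's version is the safer one to quote. It would also be worth a sentence justifying why a path attached at a vertex $z\in X\setminus\{x,y\}$ is automatically edge-disjoint from $P$: in the tree, $P$ only meets $e_1\cup e_2\cup e_3$ at the link vertices $x$, $e_1\cap e_2$, $e_2\cap e_3$, and $y$, so a branch hanging at any other vertex of $X$ shares no edge with $P$.
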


\begin{proof}
	Apply Lemma~\ref{k>= 6 P_{m-1} branching lemma} to edges $e_1$, $e_2$ and $e_3$ in path $P$. If (a) is satisfied, we are done. If (b) is satisfied, then we have paths of length $\beta$ leaving at least $k-1=3$ vertices in $(e_1\cup e_3)\setminus e_2$. The subpaths of $P$ of length $\alpha$ and $\beta$ can count for two of these, but there must be a third path on at least $\beta$ edges, edge-disjoint from $P$, attaching to a vertex other than $x$ or $y$, so in all cases we are done.
\end{proof}

\begin{lemma}\label{k=3 P_{m-1} branching lemma}
	Let $H$ be a $3$-uniform Berge-$P_m$ saturated linear tree that contains a Berge-$P_{m-1}$ on $n\geq 4$ vertices. Let $e_1=x_1x_2x_3$ and $e_2=x_3x_4x_5$ be sequential edges in some Berge-$P_{m-1}$, say $P$, with $\alpha$ edges preceding $e_1$, attaching at vertex $x_1$ and $\beta$ edges after $e_2$, attaching at $x_5$ with $\alpha\geq \beta$. Then there exists a path of length at least $\beta$ edge disjoint from $P$ that starts at a vertex in $\{x_2,x_3,x_4\}$.
\end{lemma}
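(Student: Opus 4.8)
The plan is to argue by contradiction through the saturation hypothesis. Suppose that no path of length at least $\beta$ which is edge-disjoint from $P$ starts at a vertex of $\{x_2,x_3,x_4\}$; equivalently, every such path has length at most $\beta-1$. Since a single vertex is a path of length $0$, the conclusion is immediate when $\beta=0$, so I may assume $\beta\geq 1$, hence $\alpha\geq\beta\geq 1$. Writing $P$ as a path of length $\alpha$ ending at $x_1$, then $e_1$, then $e_2$, then a path of length $\beta$ starting at $x_5$, the fact that $P$ has $m-2$ edges gives $\alpha+\beta=m-4$; moreover, since $H$ is Berge-$P_m$-free, $P$ is a longest path in $H$.

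Next I would add to $H$ the edge $e=\{x_1,x_3,x_5\}$ (whose three vertices all have degree at least $2$). This is a non-edge, because any edge of $H$ containing both $x_1$ and $x_3$ shares two vertices with the linear-tree edge $e_1$ and hence equals $e_1\neq e$. By saturation, $H+e$ contains a Berge-$P_m$, say $Q$, and since $H$ itself is Berge-$P_m$-free, $Q$ must use $e$. I would then read off the structure of $Q$ from the tree structure of $H$: deleting $e_1$ and $e_2$ splits $H$ into components $B_1,\dots,B_5$ with $x_i\in B_i$ (some possibly trivial), where $B_1$ contains the length-$\alpha$ part of $P$ and $B_5$ the length-$\beta$ part. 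Since each $B_i$ meets the core $\{x_1,\dots,x_5\}$ only at $x_i$, once $Q$ enters some $B_i$ it cannot leave; thus $Q$ decomposes as a (possibly trivial) path inside some $B_i$ ending at $x_i$, followed by a path using only edges of $\{e,e_1,e_2\}$ (hence of length at most $3$) from $x_i$ to some $x_j$, followed by a (possibly trivial) path inside $B_j$ starting at $x_j$.

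The key estimates are then: a path inside $B_1$ from $x_1$ has length at most $\alpha$, and a path inside $B_5$ from $x_5$ has length at most $\beta$, since otherwise concatenating such a path with $e_1$, $e_2$ and the opposite half of $P$ would produce a path longer than $P$; and a path inside $B_2$, $B_3$ or $B_4$ starting at $x_2$, $x_3$ or $x_4$ has length at most $\beta-1$, since such a path is automatically edge-disjoint from $P$ and so falls under our standing assumption. It remains to run the short case analysis on which pair $\{x_i,x_j\}$ serves as the endpoints of the core portion of $Q$. The only combination of bounds that could reach $\alpha+\beta+3=m-1$ would require a core path of length exactly $3$ with endpoints $x_1$ and $x_5$; but using $e_1\cap e_2=\{x_3\}$, $e\cap e_1=\{x_1,x_3\}$ and $e\cap e_2=\{x_3,x_5\}$, one checks directly that every core path from $x_1$ to $x_5$ using $e$ has length at most $2$. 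Every remaining combination gives $Q$ of length at most $\alpha+\beta+2=m-2$ (using $\beta\leq\alpha$ and $\beta\geq 1$), contradicting that $Q$ is a Berge-$P_m$.

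I expect the main obstacle to be making the decomposition of $Q$ fully rigorous: one must justify that $Q$'s excursions into the $B_i$ occur only at its two ends, that the core portion is an honest path using each of $e,e_1,e_2$ at most once (in particular, it never traverses the Berge triangle on $\{e,e_1,e_2\}$), and then carefully verify the handful of endpoint cases — the delicate point being the exclusion of a length-$3$ core path between $x_1$ and $x_5$, which is exactly what prevents $Q$ from attaining length $m-1$.
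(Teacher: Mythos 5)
Your proof is correct and takes essentially the same approach as the paper: both add the edge $e=\{x_1,x_3,x_5\}$, observe that the key obstruction is the nonexistence of a length-3 Berge path from $x_1$ to $x_5$ within $\{e,e_1,e_2\}$, and then bound the length of the resulting Berge-$P_m$. Your write-up is more explicit (the $B_1,\dots,B_5$ component decomposition and the pair-by-pair endpoint case analysis) and phrased as a contradiction rather than a direct deduction of $\gamma\ge\beta$, but the underlying argument is the same.
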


\begin{proof}
	By the length of $P$, we have that $\alpha+\beta+2=m-2$. Let $e=\{x_1,x_3,x_5\}$. Then by saturation $H+e$ contains a Berge-$P_m$ that uses the edge $e$. Observe that any such Berge-$P_m$ cannot traverse a path ending at $x_1$, then use all three edges $e_1$ $e_2$ and $e$, then traverse a path leaving $x_5$ by the choice of $e$, so it must use a path starting at a vertex in $\{x_2,x_3,x_4\}$, say of length $\gamma$. Then this path can traverse a path of length at most $\alpha$, the three edges $e_1$, $e_2$ and $e$, then the path of length $\gamma$, so $\alpha+\gamma+3\geq m-1=\alpha+\beta+3$, so $\gamma\geq \beta$ as desired.
\end{proof}

\subsubsection{Branching with no Berge-$\mathbf{P_{m-1}}$}

If $H$ does not contain a Berge-$P_{m-1}$, then we get the same minimal amount of branching for all uniformities.

\begin{lemma}\label{no P_{m-1} branching structure}
	Let $H=(V,E)$ be a $k$-uniform Berge-$P_m$-saturated linear tree with $k\geq 3$, and let $e_1$ be an edge that is not in any Berge-$P_{m-1}$. Let $P$ be a longest path in $H$ that contains $e_1$. Assume there are $\alpha$ edges in $P$ that precede $e_1$ and attach via $x\in e_1$ and assume there are $\beta$ edges that follow $e_1$, and attach via $y\in e_1$ with $\alpha\geq\beta$. Then there is a path of length $\beta$ edge-disjoint from $P$ attached to a vertex in $e_1\setminus\{x\}$
\end{lemma}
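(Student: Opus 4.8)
The plan is to mimic the "add an edge inside a short window and invoke saturation" strategy used in Lemmas \ref{k>= 6 P_{m-1} branching lemma} and \ref{k=3 P_{m-1} branching lemma}, but now exploiting that $e_1$ lies in \emph{no} Berge-$P_{m-1}$. First I would record the key numerical fact: since $P$ is a longest path through $e_1$ and $e_1$ is in no Berge-$P_{m-1}$, the length of $P$ is at most $m-3$, i.e.\ $\alpha+\beta+1\le m-3$, so $\alpha+\beta\le m-4$. (If $P$ had length $\ge m-2$ it would contain a Berge-$P_{m-1}$ through $e_1$.) Actually, saturation forces more: $H+e$ must create a Berge-$P_m$ for any non-edge $e$, and the only way adding an edge near $e_1$ can help is by gluing a side path back on, so morally $\alpha+\beta$ should be exactly $m-4$ — but I only need the inequality $\alpha+\beta\le m-4$ for the argument, so I would not belabor the exact value.

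Next I would set up the contradiction hypothesis: suppose no path of length $\ge\beta$, edge-disjoint from $P$, attaches to any vertex of $e_1\setminus\{x\}$ (in particular none attaches at $y$ either, beyond the $\beta$-path of $P$ itself — here I need to be slightly careful: the hypothesis is about paths edge-disjoint from $P$, and the $\beta$-subpath of $P$ is not edge-disjoint from $P$, so it is genuinely excluded). Then choose a non-edge $e$ whose $k$ vertices all lie in $e_1$, avoiding $x$ if $d(x)\ge 2$ so as not to accidentally allow reusing the $\alpha$-side; concretely take $e$ to consist of $k$ vertices of $e_1\setminus\{x\}$ when $|e_1\setminus\{x\}|\ge k$, which holds since $|e_1|=k$ forces $x$ to be the unique vertex we must drop — wait, $|e_1\setminus\{x\}|=k-1<k$, so instead I would take $e\subseteq e_1$ with $e\neq e_1$ only if such exists, which it does not since $|e_1|=k$. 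The right move, as in the earlier lemmas, is to take $e$ spanning $e_1$ together with one edge of $P$'s $\alpha$-side or $\beta$-side, or simply to take $e$ to be an edge meeting $e_1$ in $k-1$ vertices and meeting the "outside" in one vertex. I would follow the template of Lemma~\ref{twoedge}/Lemma~\ref{k>= 6 P_{m-1} branching lemma} precisely: pick $e$ so that any Berge-$P_m$ in $H+e$ using $e$ can use, besides $e$, at most $e_1$ and two side paths leaving $e_1$, one of which the maximality of $P$ caps at length $\alpha$ and the other of which — if it does not reuse $e_1$ — must start at a vertex of $e_1\setminus\{x\}$ distinct from where the $\alpha$-path attaches.

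Then the counting finishes it: such a Berge-$P_m$ has length at most $\alpha + 1 + 1 + \gamma$ where $\gamma$ is the length of the second side path (the two $+1$'s being $e_1$ and $e$), so $\alpha+\gamma+2\ge m-1$, i.e.\ $\gamma\ge m-3-\alpha\ge\beta$ (using $\alpha+\beta\le m-4$, so $m-3-\alpha\ge \beta+1>\beta$ — even better). Since $e$ was chosen to block the path from reattaching on the $\beta$-side of $P$ or from reusing $e_1$, this $\gamma$-path is edge-disjoint from $P$ and attaches to a vertex of $e_1\setminus\{x\}$, contradicting the assumption. I would handle the alternative that the Berge-$P_m$ uses both a path entering at $x$ and the genuine $\beta$-subpath of $P$ by noting $e$ was chosen to make that combination unusable (it would force revisiting a vertex or reusing an edge), exactly as in Lemma~\ref{k>= 6 P_{m-1} branching lemma}. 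The main obstacle I anticipate is choosing the non-edge $e$ correctly so that it simultaneously (i) is not already an edge of $H$, (ii) forbids the would-be Berge-$P_m$ from reattaching the $\beta$-subpath of $P$ or traversing $e_1$ twice, and (iii) still lets a genuine new $\gamma$-path be spliced in; getting this single choice to do all three jobs, across all uniformities $k\ge 3$ uniformly, is the delicate point, and I expect it will require taking $e$ to contain $y$ (so the path can leave along $e$ instead of along the $\beta$-subpath) together with $k-1$ vertices of $e_1$ other than $x$ — essentially the same trick as $e=\{x_1,x_3,x_5\}$ in Lemma~\ref{k=3 P_{m-1} branching lemma}, generalized.
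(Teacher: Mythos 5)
Your numerical bound $\alpha+\beta\le m-4$ is exactly the paper's starting point, and your target (a path of length $\ge\beta$ from $e_1\setminus\{x\}$) is correct. But the mechanism you propose has a genuine gap, and you notice it yourself mid-argument: you want a non-edge $e$ living entirely in $e_1$, which is impossible since $|e_1|=k$, and your fallback (``take $e$ spanning $e_1$ together with one edge of $P$'s side'' or ``meeting the outside in one vertex'') is never pinned down. The three competing requirements you list for $e$ — be a genuine non-edge, forbid re-using the $\beta$-subpath of $P$ or traversing $e_1$ twice, and still admit splicing a new $\gamma$-path — are never shown to be simultaneously achievable, and the final sentence (``I expect it will require taking $e$ to contain $y$ together with $k-1$ vertices of $e_1$ other than $x$'') still describes a $k$-set drawn from the $(k-1)$-element set $e_1\setminus\{x\}$ plus one extra vertex, without saying what that vertex is or checking the three conditions.

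The paper sidesteps this entirely: instead of re-deriving a saturation argument with a hand-built $e$, it takes $e_2$ to be the edge of $P$ immediately after $e_1$ and invokes Lemma~\ref{twoedge} as a black box on the adjacent pair $(e_1,e_2)$. That lemma hands you two \emph{vertex-disjoint} paths avoiding $e_1,e_2$ whose lengths sum to at least $m-4$. By maximality of $P$ any path off $x$ has length at most $\alpha$, and by your contradiction hypothesis together with maximality every path off a vertex of $(e_1\cup e_2)\setminus\{x\}$ has length at most $\beta-1$. Since the two paths are vertex-disjoint at most one can start at $x$, so the sum is at most $\alpha+(\beta-1)$, forcing $\alpha+\beta-1\ge m-4$, i.e.\ $\alpha+\beta\ge m-3$, which contradicts $\alpha+\beta\le m-4$. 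The missing idea in your write-up is precisely this: bring in $e_2$ and apply Lemma~\ref{twoedge} to the pair rather than attempting to re-choose a non-edge $e$ from scratch.
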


\begin{proof}
	Observe that by the length of $P$, $\alpha+\beta+1\leq m-3$. Assume to the contrary that we do not have a path of length $\beta$ leaving $e_1\setminus\{x\}$, edge-disjoint from $P$. Let $e_2$ be the first edge along $P$ after $e_1$. By the maximality of $P$ and our assumption, the longest path leaving a vertex in $e_1\cup e_2\setminus\{x\}$ is of length $\beta-1$. Thus by Lemma~\ref{twoedge}, applied to $e_1$ and $e_2$, $\alpha+\beta-1\geq m-4$, contradicting $\alpha+\beta+1\leq m-3$.	
\end{proof}

\subsection{Counting Lemmas}\label{section counting lemmas}

Now that we have established the minimal amount of branching in a Berge-$P_m$-saturated linear tree, we can begin counting the number of edges in a saturated tree $H$. Given an edge $e$ that is not in the center of $H$, these counting lemmas will give a lower bound on the number of edges in the component of $H-e$ that does not contain the center of $H$. As with the branching lemmas we need to separate into cases based on if $H$ contains a Berge-$P_{m-1}$ or not.

\subsubsection{Counting with a Berge-$\mathbf{P_{m-1}}$}

As with the branching lemmas, when $H$ contains Berge-$P_{m-1}$, we must consider different cases for $k=3$, $k=4$ and $k\geq 6$.

\begin{lemma}\label{k=3 P_{m-1} counting lemma} Let $H$ be a $3$-uniform Berge-$P_m$-saturated linear tree on $n\geq 4$ vertices. Let $e$ be an edge of $H$ and $P$ be a Berge-$P_{m-1}$ through $e$. 
Let $\alpha$ be the number of edges preceding $e$ in $P$ and $\beta$ the number of edges after $e$ attached at $v\in e$ with $\alpha\geq \beta$.
 Let $P'$ be the path on $\beta$ edges starting at $v$ contained in $P$. Let $X$ be the set of all vertices $x$ such that the path between $x$ and $v$ uses at least one edge of $P'$. Then 
		\[
		|E(X\cup \{v\})|\geq
		\begin{cases} 
		2^{\ell+1}-2&\text{ if }\beta=2\ell\\
		2^{\ell+1}+2^{\ell}-2&\text{ if }\beta=2\ell+1.
		\end{cases}
		\]
\end{lemma}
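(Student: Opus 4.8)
My plan is to prove the bound by strong induction on $\beta$, building the recursion $g(\beta)=2g(\beta-2)+2$ directly into the inductive step via a single application of Lemma~\ref{k=3 P_{m-1} branching lemma}. For $\beta\le 1$ the statement is immediate: if $\beta=0$ then $X=\emptyset$, and if $\beta=1$ the unique edge of $P'$ already lies in $E(X\cup\{v\})$. For the inductive step write $P'=v,f_1,v_1,f_2,v_2,\dots,f_\beta,v_\beta$, where $v_i$ is the spine vertex shared by $f_i,f_{i+1}$; let $x_2$ be the third vertex of $f_1$ and $x_4$ the third vertex of $f_2$. Since $H$ is a linear tree, $X\cup\{v\}$ spans a subtree $T$ in which $f_1$ is the only edge at $v$, so deleting $f_1$ and $f_2$ from $T$ gives a partition $E(T)=\{f_1,f_2\}\sqcup E(T_{x_2})\sqcup E(T_{v_1})\sqcup E(T_{x_4})\sqcup E(T_{v_2})$ into the pairwise edge-disjoint subtrees of $T$ hanging off $x_2$, $v_1$, $x_4$, and $v_2$ respectively. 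The subtree $T_{v_2}$ always carries enough edges: it contains the sub-path $f_3,\dots,f_\beta$ of $P'$, which has length $\beta-2$, so applying the lemma recursively with $f_2$, $v_2$, and $\alpha+2$ in place of $e$, $v$, and $\alpha$ (the hypothesis $\alpha+2\ge\beta-2$ is clear) gives $|E(T_{v_2})|\ge g(\beta-2)$.

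To finish it suffices to produce a second subtree among $T_{x_2},T_{v_1},T_{x_4}$ with at least $g(\beta-2)$ edges. Apply Lemma~\ref{k=3 P_{m-1} branching lemma} to the consecutive spine edges $f_1,f_2$ of $P$ (its $\alpha$-side then has $\alpha+1\ge\beta-2$ edges): this yields a path $Q$ of length at least $\beta-2$, edge-disjoint from $P$, starting at one of $x_2$, $v_1$, or $x_4$. In the case that $Q$ starts at $x_4$, $Q$ lies in $T_{x_4}$; concatenating $Q$ with the initial segment of $P$ up through $f_2$ produces a walk of length $\alpha+3+|Q|$, so Berge-$P_m$-freeness forces $|Q|=\beta-2$ and exhibits a Berge-$P_{m-1}$ through $f_2$ whose $\beta$-tail is $Q$. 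The induction then applies inside $T_{x_4}$, with $f_2$, $x_4$, $\alpha+2$ as the new $e$, $v$, $\alpha$, giving $|E(T_{x_4})|\ge g(\beta-2)$ and hence $|E(T)|\ge 2+g(\beta-2)+g(\beta-2)=g(\beta)$, as desired.

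The crux — and the step I expect to be the main obstacle — is when $Q$ starts at $x_2$ or at $v_1$, i.e.\ "too close to $v$''. Then the longest walk reaching the pendant subtree that contains $Q$ runs along $Q$, then $f_1$, then $e$, then the $\alpha$ edges of $P$, and has length only $\alpha+\beta=m-3$, so no Berge-$P_{m-1}$ passes through that subtree and the present lemma cannot be invoked there verbatim. For small $\beta$ this is harmless, since the crude estimate $|E(T_{x_2})|\ge|Q|\ge\beta-2$ already meets $g(\beta-2)$; but for larger $\beta$ a secondary analysis is needed. The key auxiliary observation is that the pendant subtree containing $Q$ has depth at most $\beta-1$ from its attachment vertex — a deeper path, combined with the $\alpha+2$ edges reachable on the far side of $f_1$, would violate Berge-$P_m$-freeness. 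One then splits into two sub-cases: if that depth equals $\beta-1$, a Berge-$P_{m-1}$ does run through the subtree and one recurses on this lemma with parameter $\beta-1$, using $g(\beta-1)\ge g(\beta-2)+1$ to absorb the deficit; if the depth equals $\beta-2$, then $Q$ is a deepest path there, no edge of the subtree lies in a Berge-$P_{m-1}$ (this needs, in addition, the bound on the depth of the neighbouring subtree $T_{v_1}$), and one instead appeals to the companion counting estimate for pendant subtrees containing no Berge-$P_{m-1}$, Lemma~\ref{no P_{m-1} counting lemma}, whose bound is at least $g(\beta-2)$. Combining these with the estimate $|E(T_{v_1})|\ge g(\beta-1)$ — obtained by recursion with $f_1$, $v_1$, $\alpha+1$ as the new $e$, $v$, $\alpha$ — and checking the arithmetic, which relies repeatedly on $g(\beta)=2g(\beta-2)+2$ and on $g(\beta)-g(\beta-1)=2^{\lceil\beta/2\rceil-1}$, is where essentially all of the remaining work lies.
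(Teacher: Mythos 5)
Your proof takes the same skeleton as the paper's: strong induction on $\beta$, one application of Lemma~\ref{k=3 P_{m-1} branching lemma} to $f_1,f_2$, and the recursion $g(\beta)=2g(\beta-2)+2$. The interesting content of your proposal is that you flag the induction-hypothesis issue the paper silently glosses over. The paper's argument says ``we have two paths of length $\beta-2$ leaving $e_1\cup e_2$\dots by induction'' and applies the lemma's bound to each of them, but the lemma's hypothesis requires the starting edge of the tail to lie on a Berge-$P_{m-1}$. That is automatic for the spine continuation (with $e$ replaced by $f_2$), and automatic for the branching-lemma path $Q$ when it attaches at $x_4$ (the concatenation through $f_1,f_2$ yields length $\alpha+3+(\beta-2)=m-2$). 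But when $Q$ attaches at $x_2$ or at $v_1=f_1\cap f_2$ and has length exactly $\beta-2$, the longest path through $Q$'s first edge is $\alpha+\beta=m-3$, so $Q$ lies on no Berge-$P_{m-1}$ and the induction cannot be invoked verbatim. The paper does address this kind of issue explicitly in the $k\geq 6$ counting lemma (where it falls back on Lemma~\ref{no P_{m-1} counting lemma} when a path cannot be extended), but the $k=3$ proof does not. Your plan to fall back on Lemma~\ref{no P_{m-1} counting lemma} in exactly that sub-case, after pinning down the depth of the offending pendant subtree, is the right repair, and the comparison $2^{\beta-3}\geq g(\beta-2)$ for $\beta\geq 3$ does go through.

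That said, the proposal as written is a plan rather than a proof --- you explicitly leave the arithmetic and the depth analysis (``essentially all of the remaining work'') unfinished --- and the last estimate you record, $|E(T_{v_1})|\geq g(\beta-1)$ obtained by ``recursion with $f_1,v_1,\alpha+1$,'' is not what that recursion gives. Applying the lemma with $e=f_1$, $v=v_1$ and the spine tail $f_2,\dots,f_\beta$ bounds the edges on the far side of $f_2$ (that is, $E(\{f_2\}\cup T_{x_4}\cup T_{v_2}\cup\cdots)$), which is disjoint from $T_{v_1}$; to bound $T_{v_1}$ itself you would need a Berge-$P_{m-1}$ whose tail actually enters $T_{v_1}$, and that is precisely what fails in the problematic sub-case. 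So the high-level route is correct and fills a genuine gap in the paper's terse argument, but the write-up still needs the sub-case analysis carried through, and that one bookkeeping claim needs to be corrected or removed.
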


\begin{proof}
	We will proceed by strong induction on $\beta$. For $0 \leq \beta\leq 2$ the result holds simply counting the edges on $P'$. Now let $\beta\geq 3$ and assume that the result holds for all paths of length less than $\beta$.
	
	Let $e_1$ and $e_2$ be the first two edges of the path $P'$ starting at $v\in  e$, in that order. Then there is a Berge-$P_{m-1}$ containing $e_1$ and $e_2$ with $\alpha+1$ edges preceding $e_1$ and $\beta-2$ edges after $e_2$ with $\alpha+1>\beta-2$. Then by Lemma~\ref{k=3 P_{m-1} branching lemma} there is a path of length at least $\beta-2$ leaving $e_1$ or $e_2$, edge disjoint from the Berge-$P_{m-1}$. Then we have two paths of length $\beta-2$ leaving $e_1\cup e_2$, and $e_1$ and $e_2$ all in $X$. By induction, this gives us that if $\beta=2\ell$, then
	\[
	|X|\geq 2(2^\ell-2)+2=2^{\ell+1}-2,
	\]
	and if $\beta=2\ell+1$,
	\[
	|X|\geq 2(2^{\ell}+2^{\ell-1}-2)+2=2^{\ell+1}+2^{\ell}-2.
	\]
\end{proof}

\begin{lemma}\label{k=4 P_{m-1} counting lemma}
Let $H$ be a $4$-uniform Berge-$P_m$-saturated linear tree on $n\geq 5$ vertices. Let $e$ be an edge of $H$ and let $P$ be a Berge-$P_{m-1}$ that uses $e$. Assume there are $\alpha$ edges preceding $e$ in $P$ and $\beta$ edges following $e$ attached at $v\in e$ with $\alpha\geq \beta$. Let $P'$ be the path on $\beta$ edges starting at $v$ contained in $P$. Let $X$ be the set of all vertices $x$ such that the path between $x$ and $v$ uses at least one edge of $P'$. Then 
	\[
	|E(X\cup \{v\})|\geq
	\begin{cases} 
	2^{\ell+1}+2^\ell-3&\text{ if }\beta=3\ell\\
	2^{\ell+2}-3&\text{ if }\beta=3\ell+1\\
	2^{\ell+2}+2^\ell-3&\text{ if }\beta=3\ell+2.
	\end{cases}
	\]
\end{lemma}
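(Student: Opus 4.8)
The plan is to mirror the structure of the $k=3$ counting lemma (Lemma~\ref{k=3 P_{m-1} counting lemma}), proceeding by strong induction on $\beta$, but now using the $k=4$ branching result (Corollary~\ref{k=4 P_{m-1} branching corollary}) to generate the recursion. First I would check the base cases $0\le\beta\le 2$ directly, simply by counting the $\le 2$ edges on $P'$ together with their endpoints (the claimed lower bounds reduce to $1,1,3$ in those ranges, matching what one obtains, noting that $\beta=3\ell+1$ with $\ell=0$ gives $1$ edge, $\beta=3\ell+2$ with $\ell=0$ gives $2$ edges, and $\beta=3\ell$ with $\ell=0$ gives $0$ edges). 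Actually, since the recursion step consumes three edges of $P'$ at a time, I will want base cases covering $\beta\in\{0,1,2,3,4,5\}$ or at least enough small values to seed each of the three residue classes mod $3$; I would verify these by hand using the branching corollary once.

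For the inductive step, let $\beta\ge 6$ (or $\ge$ the appropriate threshold) and let $e_1,e_2,e_3$ be the first three edges of $P'$ starting at $v$. Then there is a Berge-$P_{m-1}$ with $e_1,e_2,e_3$ sequential, having $\alpha+1$ edges preceding $e_1$ (attached at the appropriate vertex) and $\beta-3$ edges following $e_3$, and $\alpha+1>\beta-3$. By Corollary~\ref{k=4 P_{m-1} branching corollary}, there is a path of length at least $\beta-3$, edge-disjoint from this $P_{m-1}$, starting at a vertex of $(e_1\cup e_2\cup e_3)$ other than the two designated path-endpoints. Combined with the subpath of $P'$ of length $\beta-3$ that continues past $e_3$, this gives \emph{two} vertex-disjoint (hence, within $X$, edge-disjoint and using disjoint vertex sets off the spine) paths of length $\beta-3$ hanging into $X$, plus the three edges $e_1,e_2,e_3$ themselves. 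Applying the induction hypothesis to each of the two length-$(\beta-3)$ paths and adding $3$ for $e_1,e_2,e_3$ yields, writing $\beta=3\ell+i$ so $\beta-3=3(\ell-1)+i$, the bounds $2(2^{\ell}+2^{\ell-1}-3)+3$, $2(2^{\ell+1}-3)+3$, $2(2^{\ell+1}+2^{\ell-1}-3)+3$ for $i=0,1,2$ respectively, which simplify exactly to $2^{\ell+1}+2^{\ell}-3$, $2^{\ell+2}-3$, $2^{\ell+2}+2^{\ell}-3$.

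The main obstacle, as in the $k=3$ case but more delicate here, is making the edge-counting honest: I must ensure the two length-$(\beta-3)$ paths branch off at genuinely distinct vertices and that their vertex sets (and the sets $X$ produced when recursing on each) are disjoint and disjoint from $\{e_1,e_2,e_3\}$, so that the edges are not double-counted. This is where linearity of the tree and the precise statement of Corollary~\ref{k=4 P_{m-1} branching corollary} (that the extra path starts at a vertex in $(e_1\cup e_2\cup e_3)\setminus\{x,y\}$, and is edge-disjoint from $P$) do the work: in a linear tree, two paths leaving the three-edge segment at different vertices and edge-disjoint from the spine live in separate subtrees, so their vertex sets meet only possibly at the branch vertices, which lie among $e_1\cup e_2\cup e_3$ and are already accounted for. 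I would state this disjointness explicitly before invoking induction. A secondary subtlety is the bookkeeping at the transition between residue classes near the base of the induction (e.g.\ ensuring $\ell\ge 1$ whenever I write $2^{\ell-1}$), which is handled by choosing the base cases to cover $\beta$ up through $5$.
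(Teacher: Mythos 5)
Your proposal is correct and follows essentially the same route as the paper: strong induction on $\beta$ by residue mod $3$, consuming the first three edges of $P'$, invoking Corollary~\ref{k=4 P_{m-1} branching corollary} to produce a second path of length $\beta-3$ edge-disjoint from $P$, and closing with the same arithmetic $2\cdot(\text{inductive bound})+3$. The only cosmetic quibble is your parenthetical ``reduce to $1,1,3$'' (the base values for $\beta=0,1,2,3$ are $0,1,2,3$, as you in fact note correctly in the sentence that follows), and the paper simply takes $0\le\beta\le3$ as base cases rather than $0\le\beta\le5$.
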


\begin{proof}
	We will proceed by strong induction based on the value of $\beta$ modulo 3. For $0 \leq \beta\leq 3$ the result holds simply counting the edges on $P'$. Let $\beta \geq 4$ and assume the result holds for all paths of shorter length.
	
	Let $e_1$, $e_2$ and $e_3$ be the first three edges of $P'$ in that order. Then there is a Berge-$P_{m-1}$ containing $e_1$, $e_2$ and $e_3$ with $a+1$ edges preceding $e_1$ and $\beta-3$ edges after $e_3$ with $\alpha+1>\beta-3$. By Corollary~\ref{k=4 P_{m-1} branching corollary} there is a path of length at least $\beta-3$ leaving a vertex in $e_1\cup e_2\cup e_3$, edge disjoint from the Berge-$P_{m-1}$. Then $X$ contains two paths of length $\beta-3$ leaving $e_1\cup e_2\cup e_3$, and also the edges $e_1$, $e_2$ and $e_3$. By induction, this gives us that if $\beta=3\ell$, then
	\[
	|X|\geq 2(2^\ell+2^{\ell-1}-3)+3=2^{\ell+1}+2^\ell-3.
	\]
	If $\beta=3\ell+1$, then
	\[
	|X|\geq 2(2^{\ell+1}-3)+3=2^{\ell+2}-3.
	\]
	If $\beta=3\ell+2$, then
	\[
	|X|\geq 2(2^{\ell+1}+2^{\ell-1}-3)+3=2^{\ell+2}+2^\ell-3.
	\]
	
\end{proof}

\begin{lemma}\label{k>= 6 P_{m-1} counting lemma}
	Let $k\geq 6$. Let $H$ be a $k$-uniform Berge-$P_m$-saturated linear tree on $n\geq k+1$ vertices. Let $e$ be an edge and $P$ a Berge-$P_{m-1}$ in $H$ that uses $e$. Assume there are $\alpha$ edges preceding $e$ in $P$ and $\beta$ edges following $e$ attached at $v\in e$ with $\alpha\geq\beta$. Let $P'$ be the path on $\beta$ edges starting at $v$ contained in $P$. Let $X$ be the set of all vertices $x$ such that the path between $x$ and $v$ uses at least one edge of $P'$. Then 
	\[
	|E(X\cup \{v\})|\geq
	\begin{cases} 
	2^{\ell+1}-2&\text{ if }\beta=2\ell\\
	2^{\ell+1}+2^{\ell-1}-2&\beta=2\ell+1.
	\end{cases}
	\]
\end{lemma}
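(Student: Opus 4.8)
The plan is to mirror the proof of Lemma~\ref{k=3 P_{m-1} counting lemma} and Lemma~\ref{k=4 P_{m-1} counting lemma}, using strong induction on $\beta$, but now with steps of size $2$ (as in the $k=3$ case) and with the branching controlled by Lemma~\ref{k>= 6 P_{m-1} branching lemma} rather than the $k=3$-specific branching lemma. First I would check the base cases: for $0\le\beta\le 2$ the claimed bounds ($2^{\ell+1}-2$ with $\ell=1$ giving $2$, and $2^{\ell+1}+2^{\ell-1}-2$ with $\ell=0$ giving $1$, $\ell=1$... wait, $\beta=1$ gives $\ell=0$ so $2+\tfrac12-2=\tfrac12$) — so I would need to be slightly careful and likely take the base range to be $\beta\le 3$ or check that for small $\beta$ the bound is just $\beta$ itself, matching the number of edges along $P'$. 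Then for $\beta\ge 3$ (or $\ge 4$), let $e_1,e_2,e_3$ be the first three edges of $P'$ starting at $v$, so that $P$ restricted appropriately is a Berge-$P_{m-1}$ with $\alpha+1$ edges before $e_1$ and $\beta-3$ edges after $e_3$, with $\alpha+1>\beta-3$.

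The key step is to apply Lemma~\ref{k>= 6 P_{m-1} branching lemma} to the triple $e_1,e_2,e_3$. Its conclusion gives either (a) a path of length at least $\beta-3$ edge-disjoint from $P$ attached to a vertex of $e_2$, or (b) paths of length at least $\beta-3$ attached to at least $k-1$ vertices of $X'=(e_1\cup e_3)\setminus e_2$. In case (a) I would combine that path with the subpath of $P'$ after $e_3$ (length $\beta-3$, starting at the other end) — these two paths leave $e_1\cup e_2\cup e_3$, are contained in $X\cup\{v\}$ together with $e_1,e_2,e_3$, and are "far enough down" that their vertex sets are disjoint from each other; apply the induction hypothesis to each. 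In case (b), since $k-1\ge 5>2$, at least one of those $k-1$ paths is edge-disjoint from $P$ and attaches at a vertex different from the two endpoints used by $P$ inside $e_1\cup e_3$, so again we obtain two paths of length $\ge\beta-3$ rooted in $e_1\cup e_2\cup e_3$ whose downstream vertices are disjoint. Either way, with $e_1,e_2,e_3$ contributing $3$ edges, induction yields for $\beta=2\ell$ that $|E(X\cup\{v\})|\ge 2(2^{\ell}+2^{\ell-2}-2)+3$... here I must recompute: $\beta-3=2\ell-3=2(\ell-2)+1$, so the inductive bound for each subpath is $2^{\ell-1}+2^{\ell-3}-2$, giving $2(2^{\ell-1}+2^{\ell-3}-2)+3=2^{\ell}+2^{\ell-2}-1$, which does not obviously equal $2^{\ell+1}-2$ — so the bookkeeping needs care, and I suspect the correct decomposition uses steps of size $2$ (taking $e_1,e_2$ only, plus the branching lemma applied to $e_1,e_2$ together with one more edge) exactly as in the $k=3$ proof, so that $\beta\to\beta-2$ and the recursion $f(2\ell)=2f(2\ell-2)+2$, $f(2\ell+1)=2f(2\ell-1)+2$ reproduces the stated closed forms.

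Concretely, I would take $e_1,e_2$ the first two edges of $P'$ and a third edge $e_3$ following $e_2$, apply Lemma~\ref{k>= 6 P_{m-1} branching lemma} to $(e_1,e_2,e_3)$ in a $P_{m-1}$ with $\alpha$-side of length $\alpha+1$ and $\beta$-side of length $\beta-3$ — no wait, to get a step of size $2$ one applies the branching lemma to $(e_1,e_2,e_3)$ but reads off a path of length $\ge \beta-3$ and then notices $e_3$ itself plus that path gives length $\ge\beta-2$ rooted near $e_1\cup e_2$; combined with the length-$(\beta-2)$ tail of $P'$, and the $2$ edges $e_1,e_2$, induction on $\beta-2$ gives $f(2\ell)\ge 2f(2\ell-2)+2=2(2^\ell-2)+2=2^{\ell+1}-2$ and $f(2\ell+1)\ge 2f(2\ell-1)+2 = 2(2^{\ell}+2^{\ell-2}-2)+2=2^{\ell+1}+2^{\ell-1}-2$, as claimed. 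The main obstacle is precisely this branching-step bookkeeping: making sure the branching lemma is applied to a valid triple (the $\alpha$-side must stay the longer side, which holds since $\alpha\ge\beta$ implies $\alpha+\text{(a few)}\ge\beta-\text{(a few)}$), that in case (b) the extra branch is genuinely edge-disjoint from $P$ and roots at a new vertex (using $k-1\ge 5$), and that the two resulting long paths have disjoint vertex sets so that the two inductive counts add without overlap and without double-counting $e_1,e_2$. The verification that the two recursions produce exactly the stated formulas is then a routine check of the base cases $\beta\in\{0,1,2,3\}$.
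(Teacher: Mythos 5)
Your high-level plan — strong induction on $\beta$ using Lemma~\ref{k>= 6 P_{m-1} branching lemma}, with the two branches yielding disjoint downstream vertex sets so the counts add, and aiming at the recursion $f(\beta)\ge 2f(\beta-2)+2$ — matches the skeleton of the paper's proof, and you correctly identify the closed forms that this recursion produces. However, the proposal has several concrete gaps in exactly the places you flag as "bookkeeping."

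First, you never actually handle the base case $\beta=4$. For $\beta\le 3$ the trivial count of $\beta$ edges along $P'$ matches the formula, but at $\beta=4$ ($\ell=2$) the claimed bound is $2^{3}-2=6>4$, and the paper needs a bespoke argument there (showing at least one extra edge beyond a single branch must be present, else a specific added edge fails to create a Berge-$P_m$). Simply "checking $\beta\le 3$" as the base range, as you propose, does not close this gap, and starting the induction at $\beta\ge 4$ with the $\beta-2$ step would then need $\beta=2,3$ as bases — but the $\beta=4$ step would be referencing $\beta=2$ where the bound is just $2$, and a direct check shows $2\cdot 2+2=6$ does match; the real problem is that the branching lemma's conclusion about a path of length $\ge\beta-3=1$ is too weak to guarantee a second length-$2$ path at $\beta=4$, which is why the paper treats $\beta=4$ separately before starting the induction at $\beta\ge 5$.

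Second, your treatment of case (b) of the branching lemma is wrong. You argue that "since $k-1\ge 5>2$, at least one of those paths attaches at a new vertex," and then proceed to use only two paths total. With only two paths of length $\ge\beta-3$ (a step of $3$, not $2$) plus the $3$ edges $e_1,e_2,e_3$, the resulting bound $2f(\beta-3)+3$ is strictly weaker than the target: for $\beta=2\ell$ it gives $2(2^{\ell-1}+2^{\ell-3}-2)+3=2^\ell+2^{\ell-2}-1$, which is well below $2^{\ell+1}-2$. The paper instead uses all $k-2\ge 4$ of the branches in case (b), and it is precisely this $(k-2)$-fold multiplicity — where the hypothesis $k\ge 6$ enters — that pushes the count above the target. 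Your proposal drops the very feature of the branching lemma that makes the $k\ge 6$ case work.

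Third, you silently assume that the branch path off $e_2$ can always be bundled with $e_3$ to give a path of length $\ge\beta-2$ rooted at $e_2$. That prepending is not valid in general (the branch may attach at a vertex of $e_2$ not in $e_3$). What the paper actually does is split into whether the branch can be \emph{extended} to length $\beta-2$ or not; in the "cannot extend" sub-case the branch's edges lie in no Berge-$P_{m-1}$ and Lemma~\ref{no P_{m-1} counting lemma} is invoked to get a $2^{\beta-3}-1$ contribution. Your proposal doesn't include this sub-case or the use of the no-$P_{m-1}$ counting lemma, so the claimed reduction to the clean $f(\beta)\ge 2f(\beta-2)+2$ recursion is not justified as written.
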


\begin{proof}
	For $1 \leq \beta\leq 3$ the result holds simply counting the edges on $P'$. Let $\beta=4$. We want to show that we have at least $6$ edges. Let $e_1$, $e_2$, $e_3$ and $e_4$ be the edges in $P'$ and assume there is at most one more edge, $e_5$, incident with these edges. Since $\beta=4$, $e_5$ is not incident to $e_4$. If $e_5$ is incident to $w$ in $e_1$ or $e_3$, or if there is no fifth edge $e_5$, then adding the edge $e'$ which consists of $k$ vertices of degree $1$ in $(e_1\cup e_2)\setminus \{w\}$ does not create a Berge-$P_m$ in $H+e'$. If $e_5$ is incident to a degree $1$ vertex, $w$, in $e_2$ then adding the edge $e'=e_5\setminus \{w\}\cup \{u\}$ where $u$ is another degree $1$ vertex in $e_2$ does not create a Berge-$P_m$ in $H+e'$ contradicting the assumption that $H$ is Berge-$P_m$-saturated. Thus we have at least $6$ edges, so the result holds for $\beta=4$.
	
	Let us proceed by strong induction on $\beta$ based on parity. Let $\beta\geq 5$ and assume the result holds for paths of shorter length. 
	
	Let $e_1$, $e_2$ and $e_3$ be the first three edges in $P'$. There is a path of length $\beta-3$ leaving $e_3$ as shown in Figure~\ref{fig:k>=6Counting}. Then by Lemma~\ref{k>= 6 P_{m-1} branching lemma} there is either a path of length $\beta-3$ leaving $e_2$ or there are at least $k-2\geq 4$ paths of length $\beta-3$ leaving $(e_1\cup e_3)\setminus (e_2\cup \{v\})$. If there are $k-2$ paths of length $\beta-3$ leaving $(e_1\cup e_3)\setminus (e_2\cup \{v\})$, then by our inductive hypothesis we have
	
	\[
	|E(X\cup \{v\})|\geq
	\begin{cases} 
	(k-2)(2^{\ell-1}+2^{\ell-3}-2)+3& \text{ if }\beta=2\ell \\
	(k-2)(2^\ell-2)+3& \text{ if }\beta=2\ell+1. 
	\end{cases}
	\]
	Note that since $k\geq 6$ and $\ell\geq 2$, we have
	\[
	|E(X\cup \{v\})|\geq
	\begin{cases} 
	2^{\ell+1}+2^{\ell-1}-5& \text{ if }\beta=2\ell \\
	2^{\ell+2}-5& \text{ if }\beta=2\ell+1. 
	\end{cases}
	\]
	\begin{figure}  [h]
		\begin{center}  
			\begin{tikzpicture}[line cap=round,line join=round,>=triangle 45,x=1.0cm,y=1.0cm]
			\clip(-2,-2) rectangle (10,1);
			\draw [rotate around={0.:(1.3,0.045)}] (1.30,0.045) ellipse (1.55cm and 0.39cm); 
			\draw [rotate around={0.:(3.88,0)}] (3.88,0.05) ellipse (1.55cm and 0.39cm); 
			\draw [rotate around={0.:(6.446,0.025)}] (6.446,0.025) ellipse (1.55cm and 0.39cm); 
			\begin{scriptsize}
			\draw[color=black] (1.2,.6) node {$e_1$};
			\draw[color=black] (3.73,.6) node {$e_2$};
			\draw[color=black] (6.28,.6) node {$e_3$};
			\draw [fill=black] (2.59,0) circle (1.5pt); 
			\draw [fill=black] (5.16,0.0) circle (1.5pt); 
			\draw [fill=black] (0.048,0.0) circle (1.5pt); 
			\draw[color=black] (0.226,0.0) node {$v$};
			\draw [fill=black] (7.7,0.0) circle (1.5pt); 
			
			\draw (0.048,0) parabola (-2,-2);\draw[color=black] (-.8,-.8) node {$\alpha+1$}; 
			\draw (7.7,0)parabola  (10,2);\draw[color=black] (8.5,.6) node {$\beta-3$};
			\end{scriptsize}
			\end{tikzpicture}
		\end{center}  
		\caption{\label{fig:k>=6Counting}The edges $e_1$, $e_2$ and $e_3$ as described in Lemma~\ref{k>= 6 P_{m-1} counting lemma}.} 
	\end{figure}

If we do not have these $k-2$ paths, then there is a path of length $\beta-3$ leaving $e_2$. If this path cannot be extended to a path of length $\beta-2$, then the edges in it are in no Berge-$P_{m-1}$ so by Lemma~\ref{no P_{m-1} counting lemma} this path contributes at least $2^{\beta-3}-1$ edges. In addition to this, we have the path of length $\beta-3$ leaving $e_3$. Then by our inductive hypothesis, we have
\[
|E(X\cup \{v\})|\geq
\begin{cases}
2^{2\ell-3}-1+2^{\ell-1}+2^{\ell-3}-2+3=2^{2\ell-3}+2^{\ell-1}+2^{\ell-3}& \text{ if }\beta=2\ell\\
2^{2\ell-2}-1+2^\ell-2+3=2^{2\ell-2}+2^\ell&\text{ if }\beta=2\ell+1. 
\end{cases}
\]

If instead the path can be extended to a path of length $\beta-2$, then this path and the path of length $\beta-3$ give us two paths of length $\beta-2$ leaving $e_2$, which give us
\[
|E(X\cup \{v\})|\geq
\begin{cases}
2(2^\ell-2)+2=2^{\ell+1}-2&\text{ if }\beta=2\ell\\ 
2(2^\ell+2^{\ell-2}-2)+2=2^{\ell+1}+2^{\ell-1}-2&\text{ if }\beta=2\ell+1.
\end{cases}
\]
It is easy to check that this final case is minimal for all $\beta\geq 5$, giving us the result.
\end{proof}

\subsubsection{Counting with no Berge-$\mathbf{P_{m-1}}$}

Since there is only one branching lemma for all uniformities when $H$ does not contain a Berge-$P_{m-1}$, there is also only one counting lemma for this case.

\begin{lemma}\label{no P_{m-1} counting lemma}
	Let $k\geq 3$. Let $H$ be a $k$-uniform Berge-$P_m$-saturated linear tree on $n\geq k+1$ vertices. Let $e$ be an edge and $P$ be a Berge-$P_{m-2}$ that is the longest path using $e$. Assume there are $\alpha$ edges preceding $e$ in $P$ and $\beta$ edges following $e$ attached at $v\in e$ with $\alpha\geq \beta$. Let $P'$ be the path on $\beta$ edges starting at $v$ contained in $P$ and let $e_1$ be the first edge of $P'$(so $v\in e_1$.). Let $X$ be the set of all vertices $x$ such that the path between $x$ and $v$ uses at least one edge of $P'$. If $P'$ is the longest path that does not use edge $e$ but contains $e_1$, then the number of edges in $X\cup\{v\}$ is at least $2^\beta-1$.
\end{lemma}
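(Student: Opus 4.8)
The plan is to argue by strong induction on $\beta$, the bound $|E(X\cup\{v\})|\ge 2^{\beta}-1$ emerging from a binary branching recursion of the shape $T(\beta)\ge 2T(\beta-1)+1$. The base cases $\beta=0$ (where $X\cup\{v\}=\{v\}$) and $\beta=1$ (where $X\cup\{v\}$ already contains the single edge $e_1$) are immediate. So fix $\beta\ge 2$, and recall that since $P$ is a Berge-$P_{m-2}$ we have $\alpha+1+\beta=m-3$, with $\alpha\ge\beta$.

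The key preliminary step is to show that the first edge $e_1$ of $P'$ lies in no Berge-$P_{m-1}$ (so that the stronger branching lemma, Lemma~\ref{no P_{m-1} branching structure}, applies to it) and that $P$ is in fact a longest path through $e_1$. First I would observe that the longest path with one endpoint $v$ that uses $e$ has exactly $\alpha+1$ edges: at least $\alpha+1$, because the subpath of $P$ with endpoint $v$ consisting of $e$ together with the $\alpha$ edges of $P$ preceding $e$ is such a path; and at most $\alpha+1$, because a longer one, concatenated at $v$ with $P'$ (the two lie in different branches of the linear tree, hence are internally disjoint and edge-disjoint), would be a path through $e$ of length more than $\alpha+1+\beta=m-3$, contradicting maximality of $P$. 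Now consider any path through $e_1$: if it avoids $e$, then it has at most $\beta\le\alpha<m-3$ edges, since $P'$ is the longest path through $e_1$ avoiding $e$; and if it uses $e$, then, because $e\cap e_1=\{v\}$ in the linear tree, it passes through $v$, and splitting it there gives a piece through $e$ of length at most $\alpha+1$ and a piece through $e_1$ avoiding $e$ of length at most $\beta$, hence at most $\alpha+1+\beta=m-3$ edges in all. Thus every path through $e_1$ has at most $m-3$ edges.

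I would then apply Lemma~\ref{no P_{m-1} branching structure} to $e_1$ with the longest path $P$: in $P$ there are $\alpha+1$ edges before $e_1$ attached at $v$ and $\beta-1$ edges after $e_1$ attached at the vertex $u:=e_1\cap e_2$ (with $e_2$ the second edge of $P'$), and $\alpha+1\ge\beta-1$, so the lemma yields a path $Q$ of length at least $\beta-1$, edge-disjoint from $P$, attached to some $w\in e_1\setminus\{v\}$. Now $P'$ with $e_1$ removed is a path of length $\beta-1$ attached to $u$, while $Q$ is a path of length at least $\beta-1$ attached to $w$; regardless of whether $w=u$, these two paths lie inside two edge-disjoint subtrees of $H$, neither of which uses $e_1$ and both of which are contained in $X\cup\{v\}$ (when $w=u$ they hang off $u$ through two different edges, since $Q$ is edge-disjoint from $P\ni e_2$, so the subtrees meet in at most the vertex $u$ and share no edge). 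Each such subtree is again an instance of the lemma after relabelling — the fixed forbidden edge becomes $e_1$, whose longest path is the Berge-$P_{m-2}$ $P$, and whose parameters ``$\alpha$'' and ``$\beta$'' are $\alpha+1$ and $\beta-1$ — so by the induction hypothesis each contributes at least $2^{\beta-1}-1$ edges. Adding $e_1$ itself gives $|E(X\cup\{v\})|\ge 1+2(2^{\beta-1}-1)=2^{\beta}-1$.

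The main obstacle is the second step: showing that $e_1$, and each successive ``first edge'' met in the recursion, lies in no Berge-$P_{m-1}$, since this is exactly what licenses the use of Lemma~\ref{no P_{m-1} branching structure} instead of the weaker branching available when a Berge-$P_{m-1}$ is present. The argument is elementary length-counting, but it leans essentially on the two maximality hypotheses in the statement (that $P$ is the \emph{longest} path through $e$ and that $P'$ is the \emph{longest} path through $e_1$ avoiding $e$); one must also check that concatenations of Berge paths sharing a single vertex genuinely form Berge paths, and that when the two sub-branches are recast as instances of the lemma the required maximality conditions are inherited — or, failing that, that the truly longest admissible paths are only longer, so the bound $2^{\beta-1}-1$ still applies.
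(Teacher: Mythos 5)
Your proof is correct and follows essentially the same approach as the paper's: induction on $\beta$, using Lemma~\ref{no P_{m-1} branching structure} to produce a second branch of length $\beta-1$ edge-disjoint from $P$, then applying the inductive hypothesis to both branches and adding the edge $e_1$. You are somewhat more careful than the paper's terse proof in explicitly verifying the hypotheses needed to invoke the branching lemma — that $e_1$ lies in no Berge-$P_{m-1}$ and that $P$ is indeed a longest path through $e_1$ — which the paper leaves implicit.
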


\begin{proof}
	Let us proceed via induction. For $\beta=1$ the result is trivial. Let $\beta\geq 1$ and assume the result is true for paths of length $\beta-1$. Then, by Lemma~\ref{no P_{m-1} branching structure}, we have a path edge-disjoint from $P'$ attached to a vertex in $e_1$ of length $\beta-1$. By our inductive hypothesis this path contributes at least $2^{\beta-1}-1$ edges. Add this to the $2^{\beta-1}-1$ edges given by the path $P'-e_1$ of length $\beta-1$ and the one edge $e_1$, we get that the number of edges is at least $2(2^{\beta-1}-1)+1=2^\beta-1$.
\end{proof}

\subsection{Central Structure and Final Count}\label{section central structure}

Using the branching and counting lemmas of the previous subsections, we now determine the central structure of a Berge-$P_{m-1}$-saturated linear tree with a minimum number of edges. We can then use this central structure and the counting lemmas to get a lower bound on the size of these trees. In order to rule out the case, we first consider when $H$ has no Berge-$P_{m-1}$, and show that this is not optimal. We then give minimal edge counts for general Berge-$P_m$-saturated trees.

\subsubsection{Final count with no Berge-$\mathbf{P_{m-1}}$}

The following lemma gives a lower bound for a tree $H$ with no Berge-$P_{m-1}$. We will see that this case is far from optimal.

\begin{lemma}\label{no P_{m-1} final count lemma}
	Let $m\geq 10$ and $k\geq 3$. Let $H$ be a Berge-$P_m$ saturated linear tree on at least $k+1$ vertices and let $H$ contain no Berge-$P_{m-1}$. Then 
	\[
	|E(H)|\geq
	\begin{cases} 
	3(2^{\ell-2})-2&\text{ if }m=2\ell\\
	2^\ell-2&\text{ if }m=2\ell+1.
	\end{cases}
	\]
\end{lemma}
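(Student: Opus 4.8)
The plan is to reduce the problem to the counting lemma for the no-$P_{m-1}$ case (Lemma \ref{no P_{m-1} counting lemma}) applied at a well-chosen central vertex or edge. Since $H$ is Berge-$P_m$-saturated but contains no Berge-$P_{m-1}$, every edge of $H$ lies in some longest path, and by Observation \ref{Pm-2} that longest path has length exactly $m-3$ (it contains a Berge-$P_{m-2}$ but not a Berge-$P_{m-1}$). First I would fix a longest path $P$ in $H$, which has $m-3$ edges, and locate its ``center'': pick an edge $e$ (or a vertex) roughly in the middle of $P$ so that the two subpaths of $P$ hanging off $e$ have lengths $\alpha \geq \beta$ with $\alpha + \beta + 1 = m-3$ and $\alpha - \beta$ as small as possible — so $\beta = \lfloor (m-4)/2 \rfloor$ or thereabouts, depending on parity.

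The key step is then to apply Lemma \ref{no P_{m-1} counting lemma} to \emph{both} sides of this central edge. On the $\beta$-side we get a set $X_{\text{right}}$ of vertices reachable from $e$ through the short subpath, contributing at least $2^{\beta}-1$ edges; but crucially the \emph{long} subpath of length $\alpha$ is itself a path not using $e$, and since $H$ has no Berge-$P_{m-1}$, this path of length $\alpha$ cannot be extended — so Lemma \ref{no P_{m-1} counting lemma} applies to it as well (with its own $\beta' = \alpha$ if the roles work out, or more carefully: the long side also satisfies the ``longest path not using $e$'' hypothesis because extending it would overshoot). This should give roughly $2^{\alpha} - 1$ further edges on the left. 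Adding the two contributions plus the edge $e$ itself yields $|E(H)| \geq 2^{\alpha} + 2^{\beta} - 1$, and substituting $\alpha = \lceil (m-4)/2 \rceil$, $\beta = \lfloor (m-4)/2 \rfloor$ produces exactly the stated bounds: when $m = 2\ell$ we get $\alpha = \ell-1$, $\beta = \ell - 2$, giving $2^{\ell-1} + 2^{\ell-2} - 1 = 3\cdot 2^{\ell-2} - 1$ — so I need to recheck the indexing, as the claimed bound is $3\cdot 2^{\ell-2} - 2$, suggesting the center is an edge shared by both sides and one should be slightly more careful about whether the central edge is double-counted or whether $\alpha+\beta$ absorbs it differently. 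When $m = 2\ell+1$ we should land on $2^{\ell-1} + 2^{\ell-1} - \text{(small)} = 2^{\ell} - 2$.

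The main obstacle I anticipate is making the ``apply the counting lemma to the long side too'' step fully rigorous: Lemma \ref{no P_{m-1} counting lemma} requires that $P'$ be the \emph{longest} path not using $e$ through its first edge, and I need to argue that the length-$\alpha$ subpath of $P$ genuinely satisfies this — i.e., that no sneaky longer detour exists that avoids $e$ — which follows because any such extension, glued to the $\beta$-side through $e$, would create a path longer than $m-3$, contradicting maximality of $P$, and gluing it without $e$ around the center would still contradict the no-$P_{m-1}$ hypothesis once $\alpha \geq \beta$. The second delicate point is the exact bookkeeping of the central edge and of the vertex $v$ at which the two sides meet, which is where the ``$-2$'' versus ``$-1$'' in the final bounds comes from; I would handle this by choosing $e$ so that both the $\alpha$- and $\beta$-subpaths emanate from the \emph{same} vertex $v\in e$ only when that is forced, and otherwise track the shared edge explicitly. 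Once the central decomposition is pinned down, the rest is the routine arithmetic of summing two powers of two and matching parities of $m$.
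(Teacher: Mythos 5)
Your plan is on the right track in choosing a longest path $P$ (of length exactly $m-3$ by Observation \ref{Pm-2} and the no-$P_{m-1}$ hypothesis) and applying Lemma \ref{no P_{m-1} counting lemma} at the center, but it misses the step that actually produces enough edges: you must also invoke Lemma \ref{no P_{m-1} branching structure} at the central edge(s) to obtain \emph{additional} paths that are edge-disjoint from $P$, not just the two subpaths of $P$. Concretely, for $m=2\ell$ the path has odd length $2\ell-3$, so the central edge $e$ has $\alpha=\beta=\ell-2$ edges on each side; your two-sided count gives only $2\cdot(2^{\ell-2}-1)+1 = 2^{\ell-1}-1$, which is strictly smaller than the target $3\cdot2^{\ell-2}-2$ once $\ell\ge 3$. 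The paper instead applies the branching lemma to $e$ with $\alpha=\beta=\ell-2$ to obtain a \emph{third} path of length $\ell-2$ off $e$, so the total becomes $3(2^{\ell-2}-1)+1=3\cdot2^{\ell-2}-2$. Your arithmetic for this case is also off: you wrote $\alpha=\ell-1$, $\beta=\ell-2$, but then $\alpha+\beta+1=2\ell-2\ne m-3$.

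For $m=2\ell+1$ the path has even length $2\ell-2$, and the structure is genuinely two-edged at the middle: the paper takes the two middle edges $e_1,e_2$, uses the branching lemma on each, and splits into two situations — either there are four paths of length $\ell-2$ hanging off $\{e_1,e_2\}$ (giving $4(2^{\ell-2}-1)+2=2^\ell-2$), or a single path of length $\ge\ell-1$ hangs off the shared vertex $e_1\cap e_2$, where Lemma \ref{edge in m-3} forces that length, and the count is even larger. Your two-sided estimate here would give roughly $2^{\ell-1}+2^{\ell-2}-1$, again short of $2^\ell-2$. So the discrepancy you noticed between your bound and the target is not a bookkeeping artifact — it is the signature of the missing branching paths, and no amount of careful accounting of the central edge or the shared vertex $v$ will close that gap without them.
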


\begin{proof}
	By Observation~\ref{Pm-2} there exists some longest path $P$ that is a Berge-$P_{m-2}$ in $H$.
	
	First consider the case where $m=2\ell$ is even. Then the longest path is of odd length $m-3=2\ell-3$. Let $e$ be the central edge of this path.  By Lemma~\ref{no P_{m-1} branching structure} with $\alpha=\beta=\ell-2$, we have three paths of length $\ell-2$ leaving $e$. By Lemma~\ref{no P_{m-1} counting lemma} the existence of each path guarantees at least $2^{\ell-2}-1$ edges, so adding these to the one edge $e$, we have 
	\[
	|E(H)|\geq 3(2^{\ell-2}-1)+1=3(2^{\ell-2})-2.
	\]

	Now consider the case when $m=2\ell+1$ is odd. Then the longest path is of even length $m-3=2\ell-2$. Let $e_1$ and $e_2$ be the two middle edges of $P$. These each have a path of length $\ell-2$ leaving them. By Lemma~\ref{no P_{m-1} branching structure}, there are either two paths of length at least $\ell-2$  edge-disjoint from $P$ attached to $e_1$ and $e_2$, call this situation (i) (see Figure~\ref{noP_m-1Thm2}), or one path of length at least $\ell-2$ attached to $e_1\cap e_2$, call this situation (ii) (see Figure~\ref{noP_m-1Thm3}).
	
		\begin{figure}[h]  
		\begin{center}  
			\begin{tikzpicture}[line cap=round,line join=round,>=triangle 45,x=1.0cm,y=1.0cm]
			\clip(-2,-2) rectangle (8,1);
			\draw [rotate around={0.:(1.3,0.045)}] (1.30,0.045) ellipse (1.55cm and 0.39cm); 
			\draw [rotate around={0.:(3.88,0)}] (3.88,0.05) ellipse (1.55cm and 0.39cm); 
			
			\begin{scriptsize}
			\draw[color=black] (1.2,.6) node {$e_1$};
			\draw[color=black] (3.73,.6) node {$e_2$};
			\draw [fill=black] (0.048,0.0) circle (1.5pt); 
			\draw [fill=black] (2.59,0) circle (1.5pt); 
			\draw [fill=black] (5.16,0.0) circle (1.5pt); 
			\draw [fill=black] (1.4,0.0) circle (1.5pt); 
			\draw [fill=black] (3.95,0.0) circle (1.5pt); 
			
			\draw (0.048,0) parabola (-2,-1.8);\draw[color=black] (-.8,-.8) node {$\ell-2$}; 
			\draw (5.16,0)parabola  (7.5,2);\draw[color=black] (6,.6) node {$\ell-2$};
			\draw (1.4,0) parabola (1,-2);\draw[color=black] (.7,-1) node {$\ell-2$}; 
			\draw (3.95,0)parabola  (5.5,-1.8);\draw[color=black] (5.4,-1) node {$\ell-2$};
			\end{scriptsize}
			\end{tikzpicture}
		\end{center}  
		\caption{Situation (i) from Lemma~\ref{no P_{m-1} final count lemma} when $m=2\ell+1.$} \label{noP_m-1Thm2}
	\end{figure}  
	
	If we are in situation (i), Lemma~\ref{no P_{m-1} counting lemma} guarantees that each of these four paths contribute at least $2^{\ell-2}-1$ edges. These paths along with the two edges $e_1$ and $e_2$ gives us that
	\[
	|E(H)|\geq 4(2^{\ell-2}-1)+2=2^\ell-2.
	\] 

	\begin{figure} [h]
	\begin{center}  
		\begin{tikzpicture}[line cap=round,line join=round,>=triangle 45,x=1.0cm,y=1.0cm]
		\clip(-2,-2) rectangle (8,1);
		\draw [rotate around={0.:(1.3,0.045)}] (1.30,0.045) ellipse (1.55cm and 0.39cm); 
		\draw [rotate around={0.:(3.88,0)}] (3.88,0.05) ellipse (1.55cm and 0.39cm); 
		
		\begin{scriptsize}
		\draw[color=black] (1.2,.6) node {$e_1$};
		\draw[color=black] (3.73,.6) node {$e_2$};
		\draw [fill=black] (0.048,0.0) circle (1.5pt); 
		\draw [fill=black] (2.59,0) circle (1.5pt); 
		\draw [fill=black] (5.16,0.0) circle (1.5pt); 
		
		\draw (0.048,0) parabola (-2,-1.8);\draw[color=black] (-.8,-.8) node {$\ell-2$}; 
		\draw (5.16,0)parabola  (7.5,2);\draw[color=black] (6,.6) node {$\ell-2$};
		\draw (2.59,0) parabola (3,-2);\draw[color=black] (3.3,-1) node {$\geq\ell-2$}; 
		\end{scriptsize}
		\end{tikzpicture}
	\end{center}  
	\caption{Situation (ii) from Lemma~\ref{no P_{m-1} final count lemma} when $m=2\ell+1$.} \label{noP_m-1Thm3}
\end{figure}   
	
	If we are in situation (ii), Lemma~\ref{edge in m-3} gives us that the edges in the path, $P'$ attached to $e_1\cap e_2$ must be contained in a Berge-$P_{m-2}$, so $P'$ must have length at least $\ell-1$. Now by Lemma~\ref{no P_{m-1} counting lemma}, the two paths of length $\ell-2$ contribute $2^{\ell-2}-1$ edges while the one path of length $\ell-1$ contributes $2^{\ell-1}-1$ edges. Thus we have that
	\[
	|E(H)|\geq 2(2^{\ell-2}-1)+2^{\ell-1}-1+2=2^\ell-1>2^\ell-2.
	\]
\end{proof}

\subsubsection{Final Count with a Berge-$\mathbf{P_{m-1}}$}

This section provides lower bounds of edge counts for general Berge-$P_m$-saturated trees for uniformities $k=3$, $k=4$, and $k\geq 6$.

\begin{theorem}\label{k=3 P_{m-1} final count theorem}
		Let $H$ be a $3$-uniform Berge-$P_m$-saturated linear tree with at least $4$ vertices. For $m\geq 10$, let $m=4s+r$ with $1\leq r\leq 4$. Then
		\[
		|E(H)|\geq (3+r)2^s-5.
		\]
\end{theorem}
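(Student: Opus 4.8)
The plan is to combine the structural results already established — specifically the branching lemma (Lemma~\ref{k=3 P_{m-1} branching lemma}), the counting lemma (Lemma~\ref{k=3 P_{m-1} counting lemma}), the no-$P_{m-1}$ lower bound (Lemma~\ref{no P_{m-1} final count lemma}), and the basic observations (Observation~\ref{Pm-2} and Lemma~\ref{edge in m-3}) — into a single edge count for a $3$-uniform Berge-$P_m$-saturated linear tree $H$. First I would dispose of the case where $H$ contains no Berge-$P_{m-1}$: by Lemma~\ref{no P_{m-1} final count lemma}, such a tree has at least $3\cdot 2^{\ell-2}-2$ edges when $m=2\ell$ and at least $2^\ell-2$ edges when $m=2\ell+1$; a direct comparison shows both of these exceed $(3+r)2^s-5$ for $m=4s+r$ and $m\geq 10$, so this case is never optimal and can be set aside.

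So we may assume $H$ contains a Berge-$P_{m-1}$, say $P$. The central idea is to locate the ``center'' of $P$ and split $P$ into two halves, apply the branching lemma at the central edge (or central pair of edges, depending on the parity of $m-2$) to extract a third long path disjoint from $P$, and then feed each of these long paths into the counting lemma (Lemma~\ref{k=3 P_{m-1} counting lemma}). Concretely: if $m-2$ is even, let $e$ be the central edge of $P$, with $\alpha=\beta=(m-2)/2$ edges on either side; if $m-2$ is odd, take the two central edges $e_1,e_2$ with $(m-3)/2$ and $(m-1)/2$ edges on the two sides, symmetrize so the shorter side has $\beta$ edges, and apply Lemma~\ref{k=3 P_{m-1} branching lemma} to $e_1,e_2$ to produce a path of length at least $\beta$ edge-disjoint from $P$ leaving a vertex in $e_1\cup e_2$. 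In either case we end up with roughly three (or more) pairwise edge-disjoint ``pendant'' paths hanging off the center, each of length $\approx (m-2)/2$, plus the central edge(s). Applying the counting lemma to each pendant path gives $2^{\ell+1}-2$ or $2^{\ell+1}+2^\ell-2$ edges in the corresponding subtree, where $\ell$ is roughly half the path length. Summing the three contributions plus the central edge(s), and being careful about the exact parities (writing $m=4s+r$ and tracking the four residues $r\in\{1,2,3,4\}$ separately), should yield exactly $(3+r)2^s-5$.

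The bookkeeping of parities is where the main obstacle lies. The counting lemma's bound jumps between the two cases $\beta=2\ell$ and $\beta=2\ell+1$ — the odd case is strictly larger by a factor of roughly $3/2$ — so to get the tight constant $(3+r)2^s-5$ one must arrange that the halves of $P$ have the "right" parity, and this in turn depends on $m \bmod 4$, not just $m \bmod 2$. For $r=1,3$ one residue of $m$ is worked, for $r=2,4$ the other; in the odd-$(m-2)$ cases the asymmetric split (two central edges) has to be matched carefully against the branching lemma to make sure the third path is long enough. I would handle the four residue classes in four short parallel computations. A secondary subtlety: one must verify that the three pendant paths and the center really are pairwise edge-disjoint and together account for all the counted edges without double-counting — this follows from the "linear tree" hypothesis and the fact that the sets $X$ in the counting lemma are defined via paths through the relevant first edge, so distinct pendant directions give disjoint vertex/edge sets. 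Finally, I would double-check the small cases $m=10,11,12,13$ against the claimed formula to make sure the base of the induction implicit in Lemma~\ref{k=3 P_{m-1} counting lemma} is consistent with the stated bound.
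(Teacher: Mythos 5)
Your overall plan matches the paper's proof: dispose of the no-$P_{m-1}$ case via Lemma~\ref{no P_{m-1} final count lemma}, take a Berge-$P_{m-1}$ $P$, locate central edges, apply Lemma~\ref{k=3 P_{m-1} branching lemma} to produce a third long path, and feed everything into Lemma~\ref{k=3 P_{m-1} counting lemma} with a residue-class case split. But there are two real issues with the details.

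First, your parity bookkeeping for the central edges is off. $P$ has $m-2$ edges. When $m$ is even, $m-2$ is even, so there is no single central edge — the paper takes the two central edges $e_1,e_2$ with $\ell-2$ edges on each side (where $m=2\ell$). When $m$ is odd, $m-2$ is odd and there is a central edge; the paper still takes two consecutive edges $e_1$ (central) and $e_2$, with $\ell-1$ and $\ell-2$ edges on the two sides (where $m=2\ell+1$). Your statement that ``if $m-2$ is even, let $e$ be the central edge'' has the cases reversed, and the counts $(m-3)/2$ and $(m-1)/2$ you propose for the odd case do not add up to $m-2$. More importantly, Lemma~\ref{k=3 P_{m-1} branching lemma} requires two consecutive edges as input — it is not applicable to a single central edge as your ``even $m-2$'' branch suggests.

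Second, and more substantively, for odd $m$ the branching lemma only guarantees \emph{a} path of length at least $\ell-2$ attaching to \emph{some} vertex in $e_1\cup e_2$, and the resulting edge count depends on where it attaches and whether it can be extended. The paper must split into three sub-situations: (i) the new path attaches at $e_2$, yielding a path of length $\ell$ and two of length $\ell-2$; (ii) the new path attaches at $e_1$ and extends, giving three paths of length $\ell-1$; (iii) the new path is of length \emph{exactly} $\ell-2$, in which case its edges lie in no Berge-$P_{m-1}$, so one must invoke Lemma~\ref{no P_{m-1} counting lemma} rather than Lemma~\ref{k=3 P_{m-1} counting lemma} for that branch. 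Your plan treats this as a single computation per residue class, which underestimates the casework, and it never anticipates needing the no-$P_{m-1}$ counting lemma inside the ``$H$ contains a Berge-$P_{m-1}$'' case. The theorem's constant $(3+r)2^s-5$ is the \emph{minimum} over these sub-situations, so you cannot reach it without actually enumerating them and comparing.
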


\begin{proof}
	First, observe that if $H$ does not contain a Berge-$P_{m-1}$, then Lemma~\ref{no P_{m-1} final count lemma} implies our result. Therefore, we will assume that $H$ does contain a Berge-$P_{m-1}$.
	
	Let $m=2\ell$. Let $P$ be a Berge-$P_{m-1}$ in $H$. Then $P$ is of length $2\ell-2$. Let $e_1$ and $e_2$ be the two central edges of $P$. Then there are $\ell-2$ edges preceding $e_1$ and following $e_2$, so by Lemma~\ref{k=3 P_{m-1} branching lemma}, we have that there must be a third path of length at least $\ell-2$ attached to a vertex in $e_1$ or $e_2$, say $e_1$. Then just considering paths away from $e_1$, we have a path of length $\ell-1$ and two paths of length $\ell-2$, all edge disjoint and not using the edge $e_1$. 
	
	If $m=4s+4$, then $\ell=2s+2$, so $\ell-1=2s+1$ is odd and $\ell-2=2s$ is even. Then by Lemma~\ref{k=3 P_{m-1} counting lemma}, from the path of length $\ell-1$ we have at least $2^{s+1}+2^{s}-2$ edges and from the two paths of length $\ell-2$, we have at least $2(2^{s+1}-2)$ edges. In addition to these, we have $e_1$. Thus we get that
	\[
	|E(H)|\geq 1+2^{s+1}+2^{s}-2+2(2^{s+1}-2)=7(2^s)-5
	\]
	
	If $m=4s+2$, then $\ell=2s+1$, so $\ell-1=2s$ is even and $\ell-2=2(s-1)+1$ is odd. Then by Lemma~\ref{k=3 P_{m-1} counting lemma}, from the path of length $\ell-1$ we have at least $2^{s+1}-2$ edges and from the two paths of length $\ell-2$, we have at least $2^{s}+2^{s-1}-2$ edges. In addition to these, we have $e_1$. Thus we get that
	\[
	|E(H)|\geq 1+2^{s+1}-2+2(2^{s}+2^{s-1}-2)=5(2^s)-5.
	\]
	
	Now let $m=2\ell+1$. Let $P$ be a Berge-$P_{m-1}$ in $H$. Then $P$ is of length $2\ell-1$. Let $e_1$ be the central edge in $P$, and let $e_2$ be the edge immediately after $e_1$ in $P$, Then there are $\ell-1$ edges preceding $e_1$ and $\ell-2$ edges after $e_2$, so by Lemma~\ref{k=3 P_{m-1} branching lemma} there is another path of length at least $\ell-2$ coming from a vertex in either $e_1$ or $e_2$. 
	
	If this path comes from the edge $e_2$, then $e_2$ has a path of length $\ell$ and two paths of length $\ell-2$ coming from it, all edge disjoint and not using the edge $e_2$. Call this situation (i).
	
	If instead this path is coming form $e_1$, then $e_1$ has a path of length at least $\ell-2$ and two paths of length $\ell-1$ coming from it, all edge disjoint and not using the edge $e_1$. It may end up that there are actually three paths of length $\ell-1$ leaving $e_1$. If this is the case, call it situation (ii).
	
	If we are not in situation (i) or (ii), then there are two paths of length exactly $\ell-1$ leaving $e_1$ and the third longest path is of length exactly $\ell-2$. Observe that the edges of this last path are not in any Berge-$P_{m-1}$ in $H$. Call this situation (iii).
	
	If $m=4s+1$ and we have situation (i), then $\ell=2s$ and $\ell-2=2(s-1)$ are both even. Then by Lemma~\ref{k=3 P_{m-1} counting lemma}, from the path of length $\ell$ we have at least $2^{s+1}-2$ edges and from the two paths of length $\ell-2$, we have at least $2^{s}-2$ edges. In addition to these, we have $e_2$. Thus we get that
	\[
	|E(H)|\geq 1+2^{s+1}-2+2(2^{s}-2)=4(2^{s})-5
	\]

	If $m=4s+1$ and we have situation (ii), then
	 $\ell-1=2(s-1)+1$ is odd. Then by Lemma~\ref{k=3 P_{m-1} counting lemma} from each path of length $\ell-1$, we have at least $2^s+2^{s-1}-2$ edges, and we also have the edge $e_1$. Thus
	 \[
	 |E(H)|\geq 1+3(2^s+2^{s-1}-2)=2^{s+1}=2^{s+2}+2^{s-1}-5
	 \]
	 
	 If $m=4s+1$ and we have situation (iii), then $\ell-1=2(s-1)+1$ is odd. Then by Lemma~\ref{k=3 P_{m-1} counting lemma} from each path of length $\ell-1$, we have at least $2^s+2^{s-1}-2$ edges. Furthermore, since the path of length $\ell-2=2s-2$ is not in any Berge-$P_{m-1}$, by Lemma~\ref{no P_{m-1} counting lemma} we have at least $2^{2s-2}-1$ edges. We also have the edge $e_1$, giving us
	 \[
	 |E(H)|\geq 1+2(2^s+2^{s-1}-2)+2^{2s-2}-1=2^{2s-2}+2^{s+1}+2^s-4.
	 \]
	 
	 Thus for $m=4s+1$, since $m\geq 10$, implying $s\geq 2$, we get the lowest bound from situation (i), giving our result.

	Now let us consider $m=4s+3$. If we have situation (i), then $\ell=2s+1$ and $\ell-2=2(s-1)+1$ are odd. Then by Lemma~\ref{k=3 P_{m-1} counting lemma}, from the path of length $\ell$ we have at least $2^{s+1}+2^{s}-2$ edges and from each path of length $\ell-2$, we have at least $2^{s}+2^{s-1}-2$ edges. In addition to these, we have $e_2$. Thus we get that
	\[
	|E(H)|\geq 1+2^{s+1}+2^{s}-2+2(2^s+2^{s-1}-2)=6(2^{s})-5
	\]
	
	If $m=4s+3$ and we have situation (ii), $\ell-1=2s$ is even.  Then by Lemma~\ref{k=3 P_{m-1} counting lemma}, we have at least $2^{s+1}-2$ edges from each of the three paths of length $\ell-1$. In addition to these, we have $e_1$. Thus we get that
	\[
	|E(H)|\geq 1+3(2^{s+1}-2)=6(2^{s})-5
	\]
	
	 If $m=4s+3$ and we have situation (iii), then $\ell-1=2s$ is even. Then by Lemma~\ref{k=3 P_{m-1} counting lemma} from each path of length $\ell-1$, we have at least $2^{s+1}-2$ edges. Furthermore, since the path of length $\ell-2=2s-1$ is not in any Berge-$P_{m-1}$, by Lemma~\ref{k=3 P_{m-1} counting lemma} we have at least $2^{2s-1}-1$ edges. We also have the edge $e_1$, giving us
	\[
	|E(H)|\geq 1+2(2^{s+1}-2)+2^{2s-1}-1=2^{2s-1}+2^{s+2}-4.
	\]

	Thus in all cases we have that for $m=4s+3$, since $m\geq 10$, and consequently $s\geq 2$, $|E(H)|\geq 6(2^{s})-5$.
	
\end{proof}

\begin{theorem}\label{k=4 P_{m-1} final count theorem}
	Let $H$ be a $4$-uniform Berge-$P_m$ saturated linear tree with at least $5$ vertices. For $m\geq 10$, let $0\leq r\leq 5$ such that $m=6s+r$. Then
	\[
	|E(H)|\geq (6+r)2^s-8.
	\]
\end{theorem}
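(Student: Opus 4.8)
The plan is to follow the template of the proof of Theorem~\ref{k=3 P_{m-1} final count theorem}, transplanting ``two central edges'' to ``three central edges'' since the relevant $4$-uniform branching statement, Corollary~\ref{k=4 P_{m-1} branching corollary}, acts on blocks of three consecutive edges and Lemma~\ref{k=4 P_{m-1} counting lemma} recurses in steps of three. First I would dispose of the case that $H$ contains no Berge-$P_{m-1}$: writing $m=2\ell$ or $m=2\ell+1$, Lemma~\ref{no P_{m-1} final count lemma} gives $|E(H)|\geq 3\cdot 2^{\ell-2}-2$ or $2^{\ell}-2$, and since $\ell\approx m/2$ while the target $(6+r)2^s-8$ is only of order $2^{m/6}$, a comparison (check the residues $m=10,\dots,15$ by hand and note that the exponential in $m/2$ dominates thereafter) shows this case is far from extremal. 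So I may assume $H$ contains a Berge-$P_{m-1}$, say $P$, of length $m-2$.

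Next I would fix the central structure. Orient $P$ so that its longer half precedes, and choose three consecutive edges $e_1,e_2,e_3$ of $P$ as close to the center as possible; let $\alpha\geq\beta$ count the edges of $P$ on the two sides, so $\alpha+\beta=m-5$ and $\alpha\in\{\beta,\beta+1\}$ according to the parity of $m$. Corollary~\ref{k=4 P_{m-1} branching corollary} produces a path $Q$, edge-disjoint from $P$, of length at least $\beta$, hanging off a vertex of $e_1\cup e_2\cup e_3$ other than the two vertices where $P$ leaves this block. Whichever of $e_1,e_2,e_3$ the path $Q$ attaches to, that edge $e_i$ then has three pairwise edge-disjoint paths, none using $e_i$, emanating from it: two of these are subpaths of $P$, of lengths $\{\alpha,\beta\}$, $\{\alpha+1,\beta+1\}$, or $\{\alpha+2,\beta\}$ depending on $i$, and the third is $Q$ of length $\geq\beta$; since $H$ is a linear tree these three branches are genuinely edge-disjoint (they live in distinct subtrees obtained by deleting $e_i$). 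Hence $|E(H)|\geq 1+\sum$ of the three branch edge-counts, and I would bound each branch with Lemma~\ref{k=4 P_{m-1} counting lemma}, whose output depends only on the branch length modulo $3$, i.e.\ on $m\bmod 6$.

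The final step, and the main obstacle, is the case analysis. For each of the six residues $m\equiv 0,\dots,5\pmod 6$ one substitutes the branch lengths into Lemma~\ref{k=4 P_{m-1} counting lemma} and sums; a direct computation shows the sum is $(6+r)2^s-8$ in the favourable configurations (for instance whenever $Q$ attaches to the short-side edge or to $e_2$). The delicate point — analogous to the ``situations (i),(ii),(iii)'' that appear for $k=3$ — is that when $m$ is even and $Q$ attaches to the long-side edge $e_1$, the three lengths $\beta,\beta+1,\beta+2$ yield a sum short of the target by $2^{s-1}$; to close this gap I would either re-apply Corollary~\ref{k=4 P_{m-1} branching corollary} (or Lemma~\ref{k>= 6 P_{m-1} branching lemma}) to a block of three edges shifted by one to extract a fourth branch, or show that in that configuration one of the two $P$-subpaths off $e_1$ may be taken one edge longer, and I would invoke Lemma~\ref{no P_{m-1} counting lemma} on any branch lying in no Berge-$P_{m-1}$ (there the count $2^{\,\mathrm{length}}-1$ is so large that the bound is easily met). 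Organizing these sub-cases so that every configuration provably meets $(6+r)2^s-8$, and verifying the arithmetic across all six residues, is where essentially all of the effort lies; the bound is tight by Observation~\ref{k=4 tree construction edge count observation}.
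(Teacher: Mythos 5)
There is a genuine gap in the central step. You work with only three consecutive edges $e_1,e_2,e_3$ of $P$ and apply Corollary~\ref{k=4 P_{m-1} branching corollary} once, obtaining one extra path $Q$ and hence three branches off a single edge $e_i$. You then propose to "bound each branch with Lemma~\ref{k=4 P_{m-1} counting lemma}." But Lemma~\ref{k=4 P_{m-1} counting lemma} bounds only the \emph{shorter} side of a Berge-$P_{m-1}$ through $e_i$ (it explicitly requires $\alpha\geq\beta$ and gives control of the subtree on the $\beta$-side). One of your three branches off $e_i$ is always the longest, and you have no Berge-$P_{m-1}$ through $e_i$ making it the short side (the only candidate for a longer opposite side is $Q$, for which the corollary only guarantees length $\geq\beta$, which is not enough). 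So the lemma simply does not apply to that branch, and the arithmetic you report "works out" only because it was applied there anyway. When the long branch is instead counted legitimately (e.g.\ by applying the lemma one edge deeper and adding back the intervening edges, which is the best one can do with three central edges), the total comes out short of $(6+r)2^s-8$ by roughly $2^{s-1}$ in \emph{every} configuration of where $Q$ attaches — not only the case you flag (``$Q$ attaches to the long-side edge $e_1$''). In particular, $Q$ attaching to $e_3$, which you list as favourable, is also deficient; e.g.\ for $r=0$ the three-branch count caps out near $5\cdot 2^s+2^{s-1}-O(1)$.

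The fix you gesture at — "re-apply Corollary~\ref{k=4 P_{m-1} branching corollary} to a block of three edges shifted by one to extract a fourth branch" — is indeed the right idea, but it is not an optional patch for one residue class; it is the core of the argument and must be carried out for all residues and attachment cases. This is precisely what the paper's proof does: for even $m$ it takes \emph{four} central edges $e_1,\dots,e_4$, applies Corollary~\ref{k=4 P_{m-1} branching corollary} to $e_2,e_3,e_4$ and then (depending on where the new path attaches) again to $e_1,e_2,e_3$, to a shifted block $e_0,e_1,e_2$, or even inside the new branch — producing 4, or up to 6, paths and additional central edges (situations (i)–(iii)); for odd $m$ it takes \emph{five} central edges and a parallel case split (situations (iv),(v)). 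Each situation then gives exactly $(6+r)2^s-8$ after the tabulated computation. Also note a smaller slip: your branch lengths for $i=1$ are $\{\alpha,\beta+2\}$, not $\{\alpha,\beta\}$. The opening reduction to the $P_{m-1}$-containing case via Lemma~\ref{no P_{m-1} final count lemma} and the closing appeal to Observation~\ref{k=4 tree construction edge count observation} for tightness are fine.
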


\begin{proof}
	If $H$ does not contain a Berge-$P_{m-1}$, then Lemma~\ref{no P_{m-1} final count lemma} implies our result, so we will assume that $H$ does have a Berge-$P_{m-1}$.
	
	Now, whenever counting edges, if we have a path away from the center of length exactly $\beta$ whose edges are not in any Berge-$P_{m-1}$, then by Lemma~\ref{no P_{m-1} counting lemma} we get that this path contributes at least  $2^{\beta}-1$. If instead this path was length $\beta+1$ with edges in some Berge-$P_{m-1}$, then by Lemma~\ref{k=4 P_{m-1} counting lemma}, the path contributes $2^{\ell+2}-3$ edges if $\beta=3\ell$, $2^{\ell+2}+2^\ell-3$ edges if $\beta=3\ell+1$ and $2^{\ell+2}+2^{\ell+1}-3$ edges if $\beta=3\ell+2$. Observe that if $\beta\geq 2$, we get a lower or equal count in all cases with the path of length $\beta+1$. Thus any time we have a long path in no Berge-$P_{m-1}$, we are justified in counting its contribution as if it were longer and in a Berge-$P_{m-1}$. This fact is crucial to the argument and will be used many times, so we will refer to the argument in this paragraph as ($\ast$).
	
	Now, let us assume $m=2\ell$ is even. Let $P$ be a Berge-$P_{m-1}$ in $H$. Note that $P$ has even length $2\ell-2$. Let $e_1, e_2, e_3$ and $e_4$ be the four central edges of $P$, appearing in that order. Then there are paths of length $\ell-3$ leaving some vertex in $e_1$ and some vertex in $e_4$. Applying Corollary~\ref{k=4 P_{m-1} branching corollary} on edges $e_2, e_3$ and $e_4$, we get that there is a path of length at least $\ell-3$ leaving a vertex in $e_2\cup e_3\cup e_4$.
	
	If this path leaves $e_4\setminus e_3$, then applying Corollary~\ref{k=4 P_{m-1} branching corollary} to $e_1$, $e_2$ and $e_3$, we get another path of length $\ell-3$ away from center. Then we have four paths of length $\ell-3$ and four central edges. Call this situation (i).
	
	If instead this path leaves a vertex in the symmetric difference $e_2\triangle e_3$, then we may assume it is of length $\ell-2$ since $\ell\geq 5$. Assume without loss of generality this path leaves a vertex in $e_3\setminus e_2$. Let $e_0$ be the edge preceding $e_1$ in $P$. Then $e_0$ has a path of length $\ell-4$ leaving it. Applying Corollary~\ref{k=4 P_{m-1} branching corollary} to edges $e_0$, $e_1$ and $e_2$ gives us a second path of length at least $\ell-4$. Instead of considering $e_4$ with a path of length $\ell-3$ away from $e_4$, we will instead consider this as a path of length $\ell-2$ away form $e_3$. Then we have two paths of length $\ell-2$, two paths of length $\ell-4$ and four central edges. Call this situation (ii).
	
	Finally if the path leaves the vertex in $e_2\cap e_3$, the path must be of length at least $\ell-2$ for otherwise the edges would not be in a Berge-$P_{m-2}$, contradicting Lemma~\ref{edge in m-3}. Now, let $e_0$ be the edge preceding $e_1$ in $P$ and $e_5$ be the edge after $e_4$ in $P$. Then each of these edges has paths of length $\ell-4$ leaving them. Further, let $e_1^*$, $e_2^*$ and $e_3^*$ be the first three edges of the path leaving the vertex in $e_2\cap e_3$. Then there is a path of length at least $\ell-5$ leaving $e_3^*$. Applying Corollary~\ref{k=4 P_{m-1} branching corollary} to the three triplets $e_0, e_1, e_2$; $e_3,e_4,e_5$ and $e_1^*,e_2^*,e_3^*$ gives us two new paths of length $\ell-4$ and a path of length $\ell-5$. Observe that if the paths of length $\ell-5$ are exactly length $\ell-5$, then their edges are in no Berge-$P_{m-1}$ since the path is at distance at most $3$ from the center, so the path, three edges, and a path of length $\ell-1$ gives us a length of at most $\ell-5+3+\ell-1=2\ell-3=m-3$. Thus if $m\geq 14$, $\ell-5\geq 2$, and by ($\ast$) we may count these paths as if they are length $\ell-4$. In this case, we have nine edges in the center with six paths of length at least $\ell-4$. Call this situation (iii). 
	
	If in this same setup though, $m=10$ or $m=12$, we cannot assume this path is of length $\ell-4$. In this case though, we still have the nine central edges and four paths of length $\ell-4\geq 1$. This gives us $13$ edges, which for both $m=10$ and $m=12$ exceeds our result, so we are safe in assuming that in situation (iii), $m\geq 14$.
	
	These three situations exhaust all possibilities for paths given by the first use of Corollary~\ref{k=4 P_{m-1} branching corollary}.
	
	In each of these situations, we can use Lemma~\ref{k=4 P_{m-1} counting lemma} to count how many edges are guaranteed by each path away from the center. To do so, we need to consider each situation in terms of the residue of $m$ modulo $6$. Let $m=6s+r$. Then for even $r$, the edge counts for each situation is summarized in the table below:
	
	\begin{center}
	{\small\setlength{\extrarowheight}{2pt}\begin{tabular}{|c|c|c|c|}
		\hline
		r & Situation & Calculation &Edge Count\\
		\hline
		0 & i & $4(2^{s}+2^{s-1}-3)+4$ & $6(2^s)-8$\\
		0 & ii & $2(2^{s+1}-3)+2(2^s+2^{s-2}-3)+4$ &$6(2^s)+2^{s-1}-8$\\
		0 & iii & $6(2^s+2^{s-2}-3)+9$ &$7(2^s)+2^{s-1}-9$\\
		\hline
		2 & i & $4(2^{s+1}-3)+4$ & $8(2^{s})-8$\\
		2 & ii & $2(2^{s+1}+2^{s-1}-3)+2(2^s+2^{s-1}-3)+4$ &$8(2^{s})-8$\\
		2 & iii & $6(2^s+2^{s-1}-3)+9$ &$9(2^{s})-9$\\
		\hline
		4 & i & $4(2^{s+1}+2^{s-1}-3)+4$ &$10(2^{s})-8$\\
		4 & ii & $2(2^{s+1}+2^s-3)+2(2^{s+1}-3)+4$ &$10(2^{s})-8$\\
		4 & iii & $6(2^{s+1}-3)+9$ &$12(2^{s})-9$\\
		\hline
	\end{tabular}}
	\end{center}

	Taking the minimal value for each even $r$ gives our result for even $m$.
	
	Now let us consider odd $m=2\ell+1$. Let $P$ be a Berge-$P_{m-1}$ in $H$. Note that $P$ has odd length $2\ell-1$. Let $e_1, e_2, e_3$, $e_4$ and $e_5$ be the five central edges of $P$, appearing in that order. Then there are paths of length $\ell-3$ leaving some vertex in $e_1$ and some vertex in $e_5$. Applying Corollary~\ref{k=4 P_{m-1} branching corollary} to edges $e_3, e_4$ and $e_5$, we get that there is a path of length at least $\ell-3$ leaving a vertex in $e_3\cup e_4\cup e_5$. 
	
	If this path leaves a vertex in $e_5$, then we can apply Corollary~\ref{k=4 P_{m-1} branching corollary} on edges $e_2$, $e_3$ and $e_4$ to get a path of length at least $\ell-2$ leaving a vertex in $e_2\cup e_3\cup e_4$. Observe that this path is not the same path found on the previous use of the corollary since the corollary guarantees that the path attaches at a vertex different from the two end vertices. If we then consider the edge $e_1$ and the path away from it of length $\ell-3$ as a path of length $\ell-2$ away from $e_2$, we have two paths of length $\ell-2$, two paths of length $\ell-2$ and four central edges. Call this situation (iv).
	
	If the path instead leaves a vertex in $e_4\setminus e_3$, then since $\ell-3>2$, we can assume this path is of length $\ell-2$. Applying Corollary~\ref{k=4 P_{m-1} branching corollary} to edges $e_1$, $e_2$ and $e_3$ gives us another path of length $\ell-3$. If we then consider the edge $e_5$ and the path away from it of length $\ell-3$ as a path of length $\ell-2$ away from $e_4$, we get a situation identical to situation (iv).
	
	If the path leaves a vertex in $e_3$, then it is of length at least $\ell-2$ since otherwise the edges in it would be in no Berge-$P_{m-2}$, contradicting Lemma~\ref{edge in m-3}. We can further assume by ($\ast$), this path has length $\ell-1$ since otherwise it would be in no Berge-$P_{m-1}$ and $\ell-2>2$. If we then consider the edges $e_2$, $e_1$ and the path of length $\ell-3$ away from $e_1$ as a path of length $\ell-1$ away from $e_3$, and similarly consider the edges $e_4$, $e_5$ and the path of length $\ell-3$ away from $e_5$ as a path of length $\ell-1$ away from $e_3$, then have three paths of length $\ell-1$ and one central edge. Let this be situation (v).
	
	This exhausts all possibilities for the location of the path given by the first use of Corollary~\ref{k=4 P_{m-1} branching corollary}.
	
	Now, let $m=6s+r$. Similar to the even case, we can count the edges in each of these situations using Lemma~\ref{k=4 P_{m-1} counting lemma} to count the edges given by each path away from center. The counts are summarized in the table below for odd values of $r$:
	
	\begin{center}
	{\small\setlength{\extrarowheight}{2pt}\begin{tabular}{|c|c|c|c|}
		\hline
		r & Situation & Calculation &Edge Count\\
		\hline 
		1 & iv & $2(2^{s+1}-3)+2(2^s+2^{s-1}-3)+4$ &$7(2^{s})-8$\\
		1 & v & $3(2^{s+1}+2^{s-1}-3)+1$ &$7(2^{s})+2^{s-1}-8$\\
		\hline
		3 & iv & $2(2^{s+1}+2^{s-1}-3)+2(2^{s+1}-3)+4$ &$9(2^{s})-8$\\
		3 & v & $3(2^{s+1}+2^{s}-3)+1$ &$9(2^{s})-8$\\
		\hline
		5 & iv & $2(2^{s+1}+2^s-3)+2(2^{s+1}+2^{s-1}-3)+4$ &$11(2^{s})-8$\\
		5 & v & $3(2^{s+2}-3)+1$ &$12(2^{s})-8$\\
		\hline
	\end{tabular}}
	\end{center}
	
	Taking the minimal value for each $r$ gives us our result. This completes the proof.
\end{proof}

\begin{theorem}\label{k>=6 P_{m-1} final count theorem}
	Let $k\geq 6$ and $m\geq 10$. Let $H$ be a Berge-$P_m$ saturated linear tree on at least $k+1$ vertices. Then 
	\[
	|E(H)|\geq
	\begin{cases} 
	2^{s+1}+2^s+2^{s-1}+2^{s-2}-6&\text{ if }m=4s,\\
	2^{s+2}+2^{s-1}-6&\text{ if }m=4s+1,\\
	2^{s+2}+2^{s}-6&\text{ if }m=4s+2,\\
	2^{s+2}+2^{s+1}+2^{s-1}-6&\text{ if }m=4s+3.
	\end{cases}
	\]
\end{theorem}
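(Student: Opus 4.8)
The plan is to follow the template of the proofs of Theorems~\ref{k=3 P_{m-1} final count theorem} and~\ref{k=4 P_{m-1} final count theorem}, using the $k\geq 6$ branching and counting tools in place of the $k=3,4$ ones. First, if $H$ contains no Berge-$P_{m-1}$, then Lemma~\ref{no P_{m-1} final count lemma} already gives $|E(H)|\geq 3\cdot 2^{\ell-2}-2$ when $m=2\ell$ and $|E(H)|\geq 2^{\ell}-2$ when $m=2\ell+1$; since $\ell=\Theta(m)$ while each of the four target expressions is $\Theta(2^{m/4})$, these bounds exceed the claimed ones for all $m\geq 10$ (the boundary values $m\in\{10,11,12,13\}$ checked directly), so I may assume that $H$ contains a Berge-$P_{m-1}$, say $P$.

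Fixing such a $P$ (which has $m-2$ edges), I would examine its center: for odd $m$ there is a unique central edge, for even $m$ a central pair, and in either case I take the central edge(s) together with a few of their neighbors in $P$ to get a short block of consecutive central edges. Applying Lemma~\ref{k>= 6 P_{m-1} branching lemma} to triples of consecutive edges inside this block, alternative (b) produces at least $k-1\geq 5$ near-maximal branches attaching close to the center, and then summing the bounds of Lemma~\ref{k>= 6 P_{m-1} counting lemma} over these $\geq 5$ branches already beats the target; so the binding case is alternative (a) at each triple, yielding one new long branch per application. I would collect these branches and use Observation~\ref{Pm-2} and Lemma~\ref{edge in m-3} to rule out their being too short (a branch that is a plain path only just long enough to reach the center and run to a far leaf still has to lie in a Berge-$P_{m-2}$, which forces extra length or extra branching on it), thereby pinning down the first few levels of $H$ near the center.

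As in the $k=4$ proof, a branch of length $\beta$ whose edges lie in no Berge-$P_{m-1}$ can be ``promoted'' and counted as a branch of length $\beta+1$ lying in a Berge-$P_{m-1}$, since the quantity $2^{\beta}-1$ guaranteed by Lemma~\ref{no P_{m-1} counting lemma} is at least the value of the $k\geq 6$ counting function at $\beta+1$ for every $\beta$ that occurs, so this replacement only weakens the lower bound. Combining promotion with re-reading a short branch at distance $d$ from the center, together with those $d$ central edges and a long branch on another side, as a single longer effective branch off the center, exactly one of a short list of ``situations'' holds, indexed by where the new branches attach (pendant on a central edge, pendant on a neighbor of a central edge, or at a branch vertex) and by $m\bmod 4$. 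For each situation and each residue I would read off the lengths of the resulting edge-disjoint branches at the center, bound each branch's edge count by Lemma~\ref{k>= 6 P_{m-1} counting lemma}, add the $O(1)$ central edges, and tabulate; the minimum over situations in each residue class should come out to $2^{s+1}+2^s+2^{s-1}+2^{s-2}-6$, $2^{s+2}+2^{s-1}-6$, $2^{s+2}+2^s-6$, $2^{s+2}+2^{s+1}+2^{s-1}-6$ for $m$ equal to $4s,4s+1,4s+2,4s+3$ (matching Observation~\ref{k>=6 tree construction edge count observation}), with the smallest cases handled separately, as in the $k=4$ proof for $m=10,12$.

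The step I expect to be the main obstacle is the central-structure case analysis: the dichotomy in Lemma~\ref{k>= 6 P_{m-1} branching lemma} interacts with the possible distances of a branch vertex from the central edge(s), giving several sub-cases analogous to situations (i)--(iii) and (iv)--(v) of the $k=4$ proof, and in each I must check that the sum of the counting-lemma contributions is at least the claimed value; getting the bookkeeping of ``effective'' branch lengths right (through promotion and re-rooting) so that the minimum over situations equals the edge count of $T^{(k)}_m$ is the delicate part. A secondary point is to verify, once and for all, that promotion is safe for the $k\geq 6$ counting function in the range of $\beta$ that arises.
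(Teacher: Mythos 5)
Your plan is essentially the same route as the paper's: handle the no-Berge-$P_{m-1}$ case with Lemma~\ref{no P_{m-1} final count lemma}, fix a Berge-$P_{m-1}$ $P$, use the promotion argument (the paper calls it ($\star$)) to reduce to paths lying in some Berge-$P_{m-1}$, and then identify a short list of central ``situations'' from Lemma~\ref{k>= 6 P_{m-1} branching lemma} (the paper's situations (i)--(iii) for even $m$ and (iv)--(v) for odd $m$, indexed by where the branch found by the lemma attaches), finally tabulating with Lemma~\ref{k>= 6 P_{m-1} counting lemma} to get the four claimed formulas.

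One step you treat as obvious deserves a warning, though. You assert that alternative (b) of Lemma~\ref{k>= 6 P_{m-1} branching lemma} (the $k-1$-branch configuration) ``already beats the target'' for all $k\geq 6$. The paper does not dismiss (b) upfront; it keeps it as situation (i)/(iv) and tabulates. Its count for that situation when $m=4s$ is $(k-1)(2^{s-1}+2^{s-3}-2)+3$, and the target is $2^{s+1}+2^s+2^{s-1}+2^{s-2}-6$. Plugging in $k=6$, $s=3$ (so $m=12$) gives $5\cdot 3+3=18$ against a target of $24$; for $k=7$ the comparison also fails. The same underflow occurs in the $r=2$ and odd-$m$ tables. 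So the naive ``$(k-1)\times$ counting-lemma bound $+$ central edges'' does \emph{not} dominate the target when $k\in\{6,7\}$, and your proposed shortcut of discarding (b) at the outset would not be justified there. This is in fact a subtlety that the paper's own tabulation glosses over when it asserts that situation (ii)/(iii)/(v) gives the minimum ``since $k\geq 6$''; closing it requires an additional structural argument ruling out a $k-1$-way near-branch with only the counting-lemma minimum of descendant edges from being Berge-$P_m$-saturated, rather than a purely arithmetic comparison.
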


\begin{proof}
	If $H$ contains no Berge-$P_{m-1}$ then we are done by Lemma~\ref{no P_{m-1} final count lemma}, so assume $H$ contains a Berge-$P_{m-1}$, say $P$.
	
	Whenever we count edges, if we have a path away from the center of length exactly $\beta$ whose edges are not in any Berge-$P_{m-1}$, then by Lemma~\ref{no P_{m-1} counting lemma} we get that this path contributes at least  $2^{\beta}-1$ edges. If instead this path was length $\beta+1$ with edges in some Berge-$P_{m-1}$, then by Lemma~\ref{k>= 6 P_{m-1} counting lemma}, the path contributes $2^{\ell+1}+2^{\ell-1}-2$ edges if $\beta=2\ell$ or $2^{\ell+2}-2$ edges if $\beta=2\ell+1$. Observe that if $\beta\geq 2$, we get a lower or equal count in all cases with the path of length $\beta+1$. Thus any time we have a long path in no Berge-$P_{m-1}$, we are justified in counting its contribution as if it were longer and in a Berge-$P_{m-1}$. This argument will be used many times, so we will refer to it as ($\star$).
	
	Let $m=4s+r$. Consider first when $m=2\ell$ is even, so $r=0$ or $r=2$. Then the longest path in $H$ is of even length $2\ell-2$. Let $e_1, e_2, e_3$ and $e_4$ be the four central edges of $P$, appearing in that order. Then there are paths of length $\ell-3$ leaving some vertex in $e_1$ and some vertex in $e_4$. Applying Lemma~\ref{k>= 6 P_{m-1} branching lemma} on edges $e_2, e_3$ and $e_4$, we get that there is either a path of length at least $\ell-3$ leaving $e_3$ or there are $k-1$ paths of length $\ell-3$ leaving $(e_2\cup e_4)\setminus e_3$. 
	
	If we have $k-1$ paths of length $\ell-3$ leaving $(e_2\cup e_4)\setminus e_3$, we have a set of three edges with $k-1$ paths of length $\ell-3$ leaving them. We will call this situation (i).
	
	If we are not in situation (i), then there must be a path of length at least $\ell-3$ leaving $e_3$. If this path attaches to $P$ via the vertex in $e_2\cap e_3$, then it is of length at least $\ell-2$ since otherwise the edges in this path would be in no Berge-$P_{m-2}$, which contradicts Lemma~\ref{edge in m-3}. Furthermore, by ($\star$), we can assume for counting purposes that this path is actually of length $\ell-1$. Notice that in this case there are three paths of length $\ell-1$ all leaving the vertex in $e_2\cap e_3$. Call this situation (ii) (see Figure~\ref{saturationNumberEveni}).
	
	\begin{figure}  [h]
		\begin{center}  
			\begin{tikzpicture}[line cap=round,line join=round,>=triangle 45,x=1.0cm,y=1.0cm]
			\clip(-2,-2) rectangle (13,1);
			\draw [rotate around={0.:(1.3,0.0)}] (1.30,0.0) ellipse (1.55cm and 0.39cm); 
			\draw [rotate around={0.:(3.88,0)}] (3.88,0.0) ellipse (1.55cm and 0.39cm); 
			\draw [rotate around={0.:(6.446,0.0)}] (6.446,0.0) ellipse (1.55cm and 0.39cm); 
			\draw [rotate around={0.:(8.9,0.0)}] (8.9,0.0) ellipse (1.55cm and 0.39cm); 
			\begin{scriptsize}
			\draw[color=black] (1.2,.6) node {$e_1$};
			\draw[color=black] (3.73,.6) node {$e_2$};
			\draw[color=black] (6.28,.6) node {$e_3$};
			\draw[color=black] (8.7,.6) node {$e_4$};
			\draw [fill=black] (0.048,0.0) circle (1.5pt);
			\draw [fill=black] (2.59,0) circle (1.5pt);
			\draw [fill=black] (5.16,0.0) circle (1.5pt);
			\draw [fill=black] (7.7,0.0) circle (1.5pt);
			\draw [fill=black] (10.24,0.0) circle (1.5pt);
			
			\draw (0.048,0) parabola (-2,-2);\draw[color=black] (-.8,-.8) node {$\ell-3$};
			\draw (10.24,0)parabola  (12.9,2);\draw[color=black] (11.2,.6) node {$\ell-3$};
			\draw (5.16,0)parabola  (7.0,-2);\draw[color=black] (7,-1) node {$\ell-1$};
			\end{scriptsize}
			\end{tikzpicture}
		\end{center}   
		\caption{Situation (ii) from Theorem~\ref{k>=6 P_{m-1} final count theorem} when $m=2\ell.$}\label{saturationNumberEveni}
	\end{figure}
	
	If instead this path attaches to a vertex in $e_3\setminus e_2$, we can apply Lemma~\ref{k>= 6 P_{m-1} branching lemma} on edges $e_1,e_2$ and $e_3$, and similarly to before either we are in situation (i) or we find another path of length at least $\ell-3$ attaching to a vertex in $e_2$. Assuming that we are not in situation (i), by ($\star$), we can assume that the path attaching to a vertex in $e_3\setminus e_2$ and the path attaching to a vertex in $e_2$ are of length $\ell-2$ so that these edges are in some Berge-$P_{m-1}$.
	
	Then, including the paths containing the edges $e_1$ and $e_4$, there are at least four paths leaving vertices in $e_2\cup e_3$, all of length $\ell-2$ in a Berge-$P_{m-1}$. Call this situation (iii) (see figure~\ref{saturationNumberEvenii}).
	
	\begin{figure}  [h]
		\begin{center}  
			\begin{tikzpicture}[line cap=round,line join=round,>=triangle 45,x=1.0cm,y=1.0cm]
			\clip(-2,-2) rectangle (13,1);
			\draw [rotate around={0.:(1.3,0.0)}] (1.30,0.0) ellipse (1.55cm and 0.39cm); 
			\draw [rotate around={0.:(3.88,0)}] (3.88,0.0) ellipse (1.55cm and 0.39cm); 
			\draw [rotate around={0.:(6.446,0.0)}] (6.446,0.0) ellipse (1.55cm and 0.39cm); 
			\draw [rotate around={0.:(8.9,0.0)}] (8.9,0.0) ellipse (1.55cm and 0.39cm); 
			\begin{scriptsize}
			\draw[color=black] (1.2,.6) node {$e_1$};
			\draw[color=black] (3.73,.6) node {$e_2$};
			\draw[color=black] (6.28,.6) node {$e_3$};
			\draw[color=black] (8.7,.6) node {$e_4$};
			\draw [fill=black] (0.048,0.0) circle (1.5pt);
			\draw [fill=black] (2.59,0) circle (1.5pt);
			\draw [fill=black] (5.16,0.0) circle (1.5pt);
			\draw [fill=black] (7.7,0.0) circle (1.5pt);
			\draw [fill=black] (10.24,0.0) circle (1.5pt);
			\draw [fill=black] (3.7,0.0) circle (1.5pt);
			\draw [fill=black] (6.6,0.0) circle (1.5pt);
			
			\draw (0.048,0) parabola (-2,-2);\draw[color=black] (-.8,-.8) node {$\ell-3$};
			\draw (10.24,0)parabola  (12.9,2);\draw[color=black] (11.2,.6) node {$\ell-3$};
			\draw (3.7,0)parabola  (3.2,-2);\draw[color=black] (3.9,-1) node {$\ell-2$};
			\draw (6.6,0)parabola  (7.0,-2);\draw[color=black] (7.4,-1) node {$\ell-2$};
			\end{scriptsize}
			\end{tikzpicture}
		\end{center}   
		\caption{Situation (iii) from Theorem~\ref{k>=6 P_{m-1} final count theorem} when $m=2\ell.$}\label{saturationNumberEvenii}
	\end{figure}

These three situations exhaust all possibilities. Now for even $m=4s+r$, we can use Lemma~\ref{k>= 6 P_{m-1} counting lemma} to count the edges contributed by the paths in each of these situations. The counts are summarized in the table below for even $r$:
	\begin{center}
	{\small\setlength{\extrarowheight}{2pt}\begin{tabular}{|c|c|c|c|}
			\hline
			r & Situation & Calculation &Edge Count\\
			\hline 
			0 & i & $(k-1)(2^{s-1}+2^{s-3}-2)+3$ &$(k-1)(2^{s-1}+2^{s-3})-2k+5$\\
			0 & ii & $3(2^{s}+2^{s-2}-2)$ &$2^{s+1}+2^{s}+2^{s-1}+2^{s-2}-6$\\
			0 & iii & $4(2^s-2)+2$ &$2^{s+2}-6$\\
			\hline
			2 & i & $(k-1)(2^{s}-2)+3$ &$(k-1)(2^{s})-2k+5$\\
			2 & ii & $3(2^{s+1}-2)+4$ &$2^{s+2}+2^{s+1}-6$\\
			2 & iii & $4(2^{s}+2^{s-2}-2)+2$ &$2^{s+2}+2^{s}-6$\\
			\hline
	\end{tabular}}
\end{center}
Since $k\geq 6$, situation (ii) gives the smallest edge count when $r=0$, and situation (iii) gives the smallest when $r=2$, giving us the result for $m$ even.
	
	Now let us consider when $m=2\ell+1$ is odd. The longest path is of odd length $2\ell-1$. Let $e_1, e_2, e_3, e_4$ and $e_5$ be the five central edges of a longest path appearing in that order. Then there are paths of length $\ell-3$ leaving $e_1$ and $e_5$. If we apply Lemma~\ref{k>= 6 P_{m-1} branching lemma} to edges $e_1, e_2$ and $e_3$, we see that there is either a path of length at least $\ell-3$ leaving $e_2$ or there are $k-1$ paths of length $\ell-3$ leaving $(e_1\cup e_3)\setminus e_2$.
	
	If we have the $k-1$ paths leaving $(e_1\cup e_3)\setminus e_2$, we will call this situation (iv). If we are not in situation (iv), then we have a path of length at least $\ell-3$ leaving $e_2$. Now using Lemma~\ref{k>= 6 P_{m-1} branching lemma} on the edges $e_3$, $e_4$ and $e_5$, we either have $k-1$ paths of length $\ell-3$, as in situation (iv) or one path of length $\ell-3$ leaving $e_4$. 
	
	Assuming we are not in situation (iv), we have a path leaving $e_2$ and a path leaving $e_4$, each of length at least $\ell-3$. If one of these paths attach at the vertex in either $e_2\cap e_3$ or $e_3\cap e_4$, call this situation (v), and assume without loss of generality there is a path attached at $e_2\cap e_3$. By ($\star$), we can assume this path is actually of length $\ell-1$ since otherwise the edges in the path would be in no Berge-$P_{m-1}$. Similarly, we can assume the path that attaches to a vertex in $e_4$ is of length at least $\ell-2$, as in Figure~\ref{saturationNumberOddi}. Thus, in situation (v), we have four paths leaving the two edges $e_3$ and $e_4$, two of length $\ell-1$ and two of length $\ell-2$. 
	
	If we are not in situation (v), then neither of the paths found by the first two uses of Lemma~\ref{k>= 6 P_{m-1} branching lemma} attach in $e_3$. Thus if we apply Lemma~\ref{k>= 6 P_{m-1} branching lemma} to the edges $e_2$, $e_3$ and $e_4$, we get that there must be either a path of length at least $\ell-2\geq \ell-3$ leaving $e_3$, which gives us situation (v), or we get at least $k-1$ paths of length at least $\ell-2\geq \ell-3$ leaving vertices in $(e_2\cup e_4)\setminus e_3$, which gives us situation (iv). Thus these two situations are exhaustive.

	\begin{figure}  [h]
		\begin{center}  
			\begin{tikzpicture}[line cap=round,line join=round,>=triangle 45,x=0.8cm,y=0.8cm]
			\clip(-2,-2) rectangle (15.5,2);
			\draw [rotate around={0.:(1.3,0.0)}] (1.30,0.0) ellipse (1.55cm and 0.39cm); 
			\draw [rotate around={0.:(3.88,0)}] (3.88,0.0) ellipse (1.55cm and 0.39cm); 
			\draw [rotate around={0.:(6.446,0.0)}] (6.446,0.0) ellipse (1.55cm and 0.39cm); 
			\draw [rotate around={0.:(8.9,0.0)}] (8.9,0.0) ellipse (1.55cm and 0.39cm); 
			\draw [rotate around={0.:(11.48,0.0)}] (11.48,0.0) ellipse (1.55cm and 0.39cm); 
			\begin{scriptsize}
			\draw[color=black] (1.2,.6) node {$e_1$};
			\draw[color=black] (3.9,.6) node {$e_2$};
			\draw[color=black] (6.28,.6) node {$e_3$};
			\draw[color=black] (8.7,.6) node {$e_4$};
			\draw[color=black] (11.4,.6) node {$e_5$};
			\draw [fill=black] (0.048,0.0) circle (1.5pt);
			\draw [fill=black] (2.59,0) circle (1.5pt);
			\draw [fill=black] (5.16,0.0) circle (1.5pt);
			\draw [fill=black] (7.7,0.0) circle (1.5pt);
			\draw [fill=black] (10.24,0.0) circle (1.5pt);
			\draw [fill=black] (12.78,0.0) circle (1.5pt);
			\draw [fill=black] (9,0.0) circle (1.5pt);

			\draw (0.048,0) parabola (-2,-2);\draw[color=black] (-.8,-.8) node {$\ell-3$};
			\draw (12.78,0)parabola  (15.44,2);\draw[color=black] (13.74,.6) node {$\ell-3$};
			\draw (5.16,0)parabola  (4,2);\draw[color=black] (4.6,1.5) node {$\ell-1$};
			\draw (9,0)parabola  (10,2);\draw[color=black] (9.4,1.5) node {$\ell-2$};
			\end{scriptsize}
			\end{tikzpicture}
		\end{center}   
		\caption{Situation (v) from Theorem~\ref{k>=6 P_{m-1} final count theorem} when $m=2\ell+1.$}\label{saturationNumberOddi}
	\end{figure}
	Just as in the even case, we now use Lemma~\ref{k>= 6 P_{m-1} counting lemma} to count the edges contributed by each path in each situation for odd $m=4s+r$. This is summarized below:
	
		\begin{center}
		{\small\setlength{\extrarowheight}{2pt}\begin{tabular}{|c|c|c|c|}
				\hline
				r & Situation & Calculation &Edge Count\\
				\hline 
				1 & iv & $(k-1)(2^{s-1}+2^{s-3}-2)+3$ &$(k-1)(2^{s-1}+2^{s-3})-2k+5$\\
				1 & v & $2(2^{s}+2^{s-2}-2)+2(2^s-2)+2$ &$2^{s+2}+2^{s-1}-6$\\
				\hline
				3 & iv & $(k-1)(2^{s}-2)+3$ &$(k-1)(2^{s})-2k+5$\\
				3 & v & $2(2^{s+1}-2)+2(2^s+2^{s-2}-2)+2$ &$2^{s+2}+2^{s+1}+2^{s-1}-6$\\
				\hline
		\end{tabular}}
	\end{center}
	Since $k\geq 6$, situation (v) gives the lowest bound for both $r=1$ and $r=3$, completing the proof.
\end{proof}

\section{Saturation Numbers for Berge-$\mathbf{K_3}$, $\mathbf{C_m}$, $\mathbf{K_{1,m}}$, and $\mathbf{\ell K_2}$}\label{section other saturation numbers}
In this section we explore bounds on the Berge saturation numbers for many common classes of graphs. We begin with the saturation number for Berge matchings, $\ell K_2$.

\begin{theorem} \label{thm:matchings}
	If $k\geq 3$, then $\sat_k(n, \text{Berge-}\ell K_2)=\ell-1$ for $n\ge k(\ell-1)$. 
\end{theorem}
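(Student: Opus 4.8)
The plan is to prove matching upper and lower bounds, each argument being short. The key structural observation is that a $k$-uniform hypergraph contains a Berge-$\ell K_2$ exactly when it has $\ell$ distinct hyperedges $f_1,\dots,f_\ell$ admitting pairwise disjoint ``representative pairs'' $g_i\subseteq f_i$ with $|g_i|=2$; in particular, any hypergraph with at most $\ell-1$ edges is automatically Berge-$\ell K_2$-free, since a subhypergraph that is a Berge-$\ell K_2$ has exactly $\ell$ edges.

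For the lower bound I would argue by contradiction. Assume $\ell\geq 2$ (for $\ell=1$ the bound $\sat_k\geq 0$ is vacuous) and suppose $H$ is Berge-$\ell K_2$-saturated with $|E(H)|\leq \ell-2$. Using $n\geq k(\ell-1)$ and the standard estimate $\binom{n}{k}\geq (n/k)^k$, we get $\binom{n}{k}\geq \binom{k(\ell-1)}{k}\geq (\ell-1)^k\geq \ell-1>\ell-2\geq |E(H)|$, so $\overline{H}$ has an edge $e$. Then $H+e$ has at most $\ell-1<\ell$ edges, hence is Berge-$\ell K_2$-free, contradicting saturation. Therefore $|E(H)|\geq \ell-1$.

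For the upper bound I would take $H$ to consist of $\ell-1$ pairwise disjoint $k$-edges $f_1,\dots,f_{\ell-1}$ together with $n-k(\ell-1)\geq 0$ isolated vertices; this has only $\ell-1$ edges, so it is Berge-$\ell K_2$-free, and it remains to verify saturation. Given $e\in E(\overline{H})$, it suffices to pick a $2$-set $g_\ell\subseteq e$ with $|f_i\cap g_\ell|\leq 1$ for every $i$: then $|f_i\setminus g_\ell|\geq k-1\geq 2$, so (the $f_i$ being pairwise disjoint) any choices $g_i\subseteq f_i\setminus g_\ell$ together with $g_\ell$ give pairwise disjoint representative pairs and hence a Berge-$\ell K_2$ in $H+e$. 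If $k\geq 4$ any $2$-subset of $e$ works since then $|f_i\setminus g_\ell|\geq k-2\geq 2$ regardless. If $k=3$: if no $f_i$ contains two vertices of $e$, take $g_\ell$ to be any two vertices of $e$; otherwise some $f_j$ contains two vertices $x,y$ of $e$, and since $e\neq f_j$ while $|e|=|f_j|=3$, the third vertex $z$ of $e$ lies outside $f_j$, so $g_\ell=\{x,z\}$ meets $f_j$ only in $x$ and meets every other $f_i$ in at most the single vertex $z$. In every case $H+e$ contains a Berge-$\ell K_2$, so $H$ is saturated and $\sat_k(n,\text{Berge-}\ell K_2)\leq \ell-1$.

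The only delicate point is this last $k=3$ verification: one cannot simply use an arbitrary pair of vertices of the added edge $e$ as its representative, because $e$ may share two of its three vertices with one of the $f_j$'s and then truncating $f_j$ leaves only one vertex. The resolution is exactly that $e$ and $f_j$ have the same size and are distinct, so $e\not\subseteq f_j$, which forces the third vertex of $e$ to be available for a ``spread out'' choice of representative. Everything else is routine.
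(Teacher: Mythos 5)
Your proof is correct and follows essentially the same approach as the paper: the same construction (disjoint $k$-edges plus isolated vertices), and the same key idea of choosing a ``spread-out'' pair inside the new edge $e$ so that each old edge retains two free vertices (your ``$|f_i\cap g_\ell|\le 1$'' is exactly the paper's ``non-adjacent pair''). Your write-up is a bit more explicit than the paper's --- you verify $\overline{H}\neq\emptyset$ in the lower bound and spell out the $k=3$ case that the paper handles implicitly via the non-adjacency observation --- but the substance is identical.
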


\begin{proof}
	Let $H$ be a $k$-uniform Berge-$\ell K_2$-saturated hypergraph with $k\geq 3$ and $n\geq k(\ell-1)$. First, note that any Berge-$\ell K_2$-saturated hypergraph must have at least $\ell-1$ edges so $|E(H)|\geq \ell-1$. 
	
	Now suppose that $H$ is the hypergraph consisting of $\ell-1$ disjoint edges and $n-(\ell-1)k$ isolated vertices. Observe that for any edge $e\in E(\overline{H})$, we can find two vertices $x,y\in e$ such that $x$ and $y$ are not adjacent in $H$. Then every edge in $H$ contains a pair of vertices that are not $x$ or $y$, so every edge in $H$ and $e$ can be used to create a copy of $K_2$ disjoint from all the other copies of $K_2$. Thus $H$ is Berge-$\ell K_2$-saturated.
\end{proof}

Next, we will establish the saturation number for triangles. To do so, we need a definition. A connected component is \emph{edge-minimal} if no connected hypergraph on the same number of vertices has fewer edges. Observe that an edge minimal component with $n'$ vertices has  $\lceil\frac{n'-1}{k-1}\rceil$ edges. Now, the following lemma will be useful in the proof of $\sat_k(n,\text{Berge-}K_3)$.

\begin{lemma}\label{noMinDiv}
Let $k\geq 3$. Let $H$ be a Berge-$K_3$-saturated $k$-uniform hypergraph. If $H$ contains more than one component, then $H$ does not have an edge-minimal component with $n'>1$ vertices such that $(k-1)\mid (n'-1)$.
\end{lemma}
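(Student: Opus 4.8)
The plan is to argue by contradiction: suppose $H$ is Berge-$K_3$-saturated, has at least two components, and one of its components $C$ is edge-minimal on $n'>1$ vertices with $(k-1)\mid(n'-1)$. An edge-minimal component on $n'$ vertices with $(k-1)\mid(n'-1)$ has exactly $\frac{n'-1}{k-1}$ edges; such a hypergraph is as sparse as a connected $k$-uniform hypergraph can be, and in fact it must be a linear tree (a ``loose tree''), since any extra overlap between edges would let us drop an edge and stay connected. In particular $C$ contains a \emph{leaf}: an edge $f$ containing at least $k-1$ vertices of degree $1$ in $H$. Fix such a leaf $f$ in $C$, let $u$ be the unique vertex of $f$ that may have degree greater than $1$, and let $D$ be any other component of $H$; pick any vertex $w\in D$.

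Next I would examine the non-edge $e$ obtained by taking $k-1$ degree-$1$ vertices of $f$ (all lying in $f\setminus\{u\}$) together with the vertex $w\in D$. Since $H$ is Berge-$K_3$-saturated and $e\notin E(H)$, the hypergraph $H+e$ contains a Berge-$K_3$, i.e.\ three edges $g_1,g_2,g_3$ and three vertices $a,b,c$ with $\{a,b\}\subseteq g_1$, $\{b,c\}\subseteq g_2$, $\{a,c\}\subseteq g_3$. This Berge-$K_3$ must use $e$ (as $H$ itself was Berge-$K_3$-free), say $e=g_1$, so the witnessing pair for $g_1$ is some $\{a,b\}\subseteq e$. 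Now the key point: every vertex of $e$ except $w$ is a degree-$1$ vertex of $f$ in $H$, hence in $H+e$ it lies only in the edges $f$ and $e$. So if $a$ (or $b$) is such a vertex, the only other edge through it is $f$, forcing $g_3$ (resp.\ $g_2$) to equal $f$; and the third edge of the triangle must then contain the remaining triangle vertex together with a vertex of $f$ or $w$. I would push this case analysis: since at most one of $a,b$ can be the special vertex $w$, at least one of them is a degree-$1$ vertex of $f$, so $f\in\{g_2,g_3\}$. Tracking which vertex plays which role, the third edge is forced to be an edge of $H$ meeting $f$ or meeting $\{w\}\cup D$, but $w$ has its neighbourhood entirely inside $D$ while $f$'s other edges lie inside $C$ — so the triangle cannot close up across the two components, giving the contradiction. (One should double-check the sub-case where $\{a,b\}$ are both degree-$1$ vertices of $f$: then $g_2$ and $g_3$ both must be $f$, impossible since $g_2\ne g_3$; and the sub-case $w\in\{a,b\}$, where the other of $a,b$ is a degree-$1$ vertex of $f$, so one of $g_2,g_3$ is $f$ and the other must contain $w$, i.e.\ lie in $D$, but it must also contain a vertex of $f\subseteq C$, again impossible.)

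The main obstacle I anticipate is making the leaf structure fully rigorous and handling the boundary vertex $u$ cleanly: I need $f$ to genuinely have $k-1\ge 2$ vertices of degree $1$, which is where edge-minimality plus $(k-1)\mid(n'-1)$ (so that $C$ is a loose tree rather than something with a cycle or repeated-vertex overlaps) is essential — this is exactly the hypothesis that is dropped when $C$ is the whole of $H$, since a single-component saturated hypergraph need not be a tree. A secondary subtlety is that $e$ must actually be a non-edge of $H$: since its $k-1$ leaf vertices have degree $1$ in $H$, the only edge containing all of them would be $f$, which also contains $u\ne w$, so indeed $e\notin E(H)$, and also $|e|=k$ because $w\notin f$. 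With those two points nailed down, the triangle-can't-close-across-components argument above finishes the proof.
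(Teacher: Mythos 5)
Your proof takes the same approach as the paper: use the hypotheses to conclude that the edge-minimal component is a linear tree and hence has a leaf edge $f$ with $k-1$ degree-one vertices, then add an edge consisting of those $k-1$ vertices together with a vertex from another component and observe that this cannot create a Berge-$K_3$. The paper states that last step without detail; your careful case analysis of how a hypothetical Berge-$K_3$ would have to use $e$, and why the triangle cannot close across components, is a correct and useful elaboration of exactly that step.
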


\begin{proof}
Assume $H$ contains an edge-minimal component, $H_1$, with $n'>1$ vertices such that $(k-1)\mid (n'-1)$. 
Then, since $H_1$ is connected with $\frac{n'-1}{k-1}$ edges, $H_1$ is a linear tree.
Thus, $H_1$ has at least one edge containing exactly $k-1$ vertices of degree $1$, say $v_1, v_2,\dots, v_{k-1}$. 
Let $x$ be a vertex in another component of $H$. 
Then, adding the edge $v_1 v_2 \dots v_{k-1}x$ does not introduce a $K_3$. Thus, $H$ is not Berge-$K_3$-saturated.
\end{proof}

\begin{theorem}\label{thm:triangle}
	Let $k\geq 3$. For all $n\ge k+1$, $\sat_k(n,\text{Berge-}K_3)=\CL{\frac{n-1}{k-1}}$.
\end{theorem}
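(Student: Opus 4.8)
The plan is to prove the matching upper and lower bounds; the upper bound needs an explicit ``near-sunflower'' construction, and the lower bound splits on whether the saturated hypergraph is connected.

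\emph{Upper bound.} Write $n-1=(k-1)q+r$ with $0\le r\le k-2$; since $n\ge k+1$ we have $q\ge 1$. Fix a vertex $v$ and take $q$ petals $e_1,\dots,e_q$, each containing $v$ and otherwise pairwise disjoint, covering $1+q(k-1)$ vertices. If $r=0$ this uses all $n$ vertices and $q=\lceil (n-1)/(k-1)\rceil$ edges; if $r\ge 1$, let $w_1,\dots,w_r$ be the remaining vertices and add one more edge $e_{q+1}=\{v,w_1,\dots,w_r\}\cup U$, where $U$ is any set of $k-1-r$ vertices of $e_1\setminus\{v\}$, giving $q+1=\lceil (n-1)/(k-1)\rceil$ edges in total. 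Call this hypergraph $S$. To see $S$ is Berge-$K_3$-free, note every edge contains $v$ and the only non-$v$ vertices lying in two edges are those of $U=e_1\cap e_{q+1}$ (present only when $r\ge 1$); a short case analysis, according to whether $v$ is one of the three triangle vertices, shows a Berge-$C_3$ (which needs three distinct edges and a vertex in two of them) cannot occur. To see $S$ is saturated, take $e\in E(\overline S)$: I claim $e$ contains two distinct vertices $x,y\ne v$ lying in distinct edges of $S$ — otherwise every non-$v$ vertex of $e$ lies in exactly one edge of $S$ and all in a common one, forcing $e$ to equal that edge. Then $\{x,y,v\}$ together with $e$ and the two edges through $x$ and through $y$ is a Berge-$C_3$ in $S+e$.

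\emph{Lower bound, main line.} Let $H$ be Berge-$K_3$-saturated on $n\ge k+1$ vertices. First, $H$ has at most $k-1$ components: if it had $k$ of them, choosing one vertex from each produces an edge $e\notin E(H)$ such that $H+e$ has no Berge-$C_3$ (in any Berge-$C_3$ through $e$, the other two triangle vertices would be forced into a common existing component), contradicting saturation. If $H$ is connected, then $|E(H)|\ge \lceil (n-1)/(k-1)\rceil$ because every connected $k$-uniform hypergraph on $n$ vertices has at least that many edges, and we are done. So assume $H$ is disconnected, with $p$ isolated vertices and non-trivial components $C_1,\dots,C_{c_0}$ ($c_0\ge 1$). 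By Lemma~\ref{noMinDiv}, no $C_i$ is edge-minimal with $(k-1)\mid(|V(C_i)|-1)$; equivalently, no $C_i$ is a linear tree, so $|V(C_i)|\le (k-1)|E(C_i)|$ for each $i$. Summing, $n-p=\sum_i|V(C_i)|\le (k-1)|E(H)|$, hence $|E(H)|\ge (n-p)/(k-1)$, which already gives $|E(H)|\ge\lceil (n-1)/(k-1)\rceil$ when $p\le 1$ (in particular whenever $k=3$, since then $p\le k-2=1$).

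\emph{The remaining case $p\ge 2$} (which forces $k\ge 4$) is the main obstacle: here one must recover up to $p-1$ further edges. The key extra structural fact is that when $H$ has two isolated vertices $x_1,x_2$, no non-trivial component can contain an edge with $k-2$ or more degree-$1$ vertices: otherwise, letting $e$ consist of $x_1,x_2$ and $k-2$ such degree-$1$ vertices $w_3,\dots,w_k$, a Berge-$C_3$ in $H+e$ would need two of the $w_i$ to be joined through a common vertex by two distinct edges, impossible since each $w_i$ lies in a single edge. Combining this with the degree identity $\sum_v\deg(v)=k\,|E(C_i)|$ forces the deficiency $(k-1)|E(C_i)|-|V(C_i)|+1$ of each non-trivial $C_i$ to grow with $|V(C_i)|$, and then a bookkeeping argument (using that $H$ has at most $k-1$ components and $|V(C_i)|\ge k$) is meant to yield $\sum_i\bigl((k-1)|E(C_i)|-|V(C_i)|+1\bigr)\ge p+c_0-1$, which rearranges to $|E(H)|\ge\lceil (n-1)/(k-1)\rceil$. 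I expect checking this final inequality cleanly across all $k\ge 4$ and all admissible small $n$ to be the most delicate step.
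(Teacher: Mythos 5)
Your upper bound is exactly the paper's construction and argument (the ``near-sunflower'' $S_n^{(k)}$), so that part matches.

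Your lower bound agrees with the paper through the connected case and the invocation of Lemma~\ref{noMinDiv} to conclude that no component of a disconnected saturated $H$ is a linear tree, which yields $|E(H)|\ge (n-p)/(k-1)$ and settles $p\le 1$ (so $k=3$ is done). But the case $p\ge 2$, which you yourself flag as ``the most delicate step,'' is a genuine gap, and the route you sketch does not close it. Your structural lemma (every edge of a non-trivial component has at most $k-3$ degree-$1$ vertices when $p\ge2$) is correct, but the per-component deficiency bound it yields, namely $(k-1)|E(C_i)|-|V(C_i)|\ge |E(C_i)|/2$, is too weak. A concrete witness: take $k=5$, $n=9$, three isolated vertices, and one component $C_1$ on $6$ vertices consisting of two $5$-edges $f_1,f_2$ with $|f_1\cap f_2|=4$. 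One checks this $H$ is Berge-$K_3$-saturated with $|E(H)|=2=\lceil 8/4\rceil$, so your target inequality $\sum_i\bigl((k-1)|E(C_i)|-|V(C_i)|+1\bigr)\ge p+c_0-1$ must hold with equality ($3\ge 3$), yet the bound derived from your structural lemma gives only $|E(C_1)|/2+1=2<3$. So the ``bookkeeping argument'' you describe cannot be completed with the tools you set up; some further idea is needed. Also note that $(k-1)|E(C_i)|-|V(C_i)|+1$ does not grow with $|V(C_i)|$ at fixed $|E(C_i)|$ --- it decreases --- so the informal justification you give for the bookkeeping is off.

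The paper handles this with a genuinely different mechanism. It introduces the excess-degree parameter $O_G=\sum_v(d(v)-1)=k|E(G)|-|V(G)|$, carries out a careful component-by-component lower bound to show $O_H\ge O_{S_n^{(k)}}-1$, and then uses the crucial congruence $O_H\equiv O_{S_n^{(k)}}\pmod{k}$ (both equal $k|E(\cdot)|-n$) to upgrade this to $O_H\ge O_{S_n^{(k)}}$, which is equivalent to $|E(H)|\ge|E(S_n^{(k)})|$. That modular parity step is what absorbs the slack you were unable to account for; without it (or something playing the same role), your plan for $p\ge 2$ is incomplete.
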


\begin{proof}
	
	Let $S=S_n^{(k)}$ be the $k$-uniform hypergraph with $\left\lfloor\frac{n-1}{k-1}\right\rfloor$ edges that intersect only at a single vertex $v$, and if $r=(n-1)\mod(k-1)>0$, one more edge containing the remaining $r$ vertices, the vertex $v$, and $k-1-r$ other vertices, all from exactly one other edge of $S$ (see Figure \ref{starlike}). 
	Then $S$ has $\CL{\frac{n-1}{k-1}}$ edges and is Berge-$K_3$-free since a Berge-$K_3$ must have three edges, each of which contain two vertices of degree $2$, but $S$ only has two such edges. Further $S$ is Berge-$K_3$-saturated. Indeed, every edge $e\in E(\overline{S})$ shares vertices that are not $v$ with at least two distinct edges of $S$, say $f$ and $g$, so $S+e$ contains the Berge-$K_3$ with edges $e$, $f$ and $g$. Hence, $\sat(n,\text{Berge-}K_3)\leq\CL{\frac{n-1}{k-1}}$.
	
	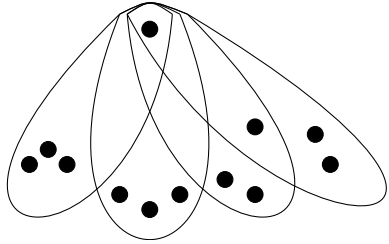
\begin{figure}[h]
		\begin{center}
			\begin{tikzpicture}[scale=2]
			\clip(-1,-1.5) rectangle (1.6,.5);
			
			\draw (-.2,0) .. controls (-2,-1.75) and (0,-1.85) .. (.15,0);
			\draw (-.2,0) .. controls (0,.1) .. (.15,0);
			\draw (-.2,0) .. controls (-1,-2) and (1,-2) .. (.2,0); 
			\draw (-.2,0) .. controls (0,.1) .. (.2,0);
			\draw (-.15,0) .. controls (0,-1.85) and (2,-1.75) .. (.25,0);
			\draw (-.15,0) .. controls (0,.1) .. (.25,0);
			\draw (-.15,0) .. controls (.75,-1.75) and (3,-1.65) .. (.25,0);
			\draw (-.15,0) .. controls (0,.1) .. (.25,0);
			
			\draw [fill=black] (0,-0.1) circle (1.5pt);
			\draw [fill=black] (-.675,-.9) circle (1.5pt);
			\draw [fill=black] (-.8,-1) circle (1.5pt);
			\draw [fill=black] (-.55,-1) circle (1.5pt);
			\draw [fill=black] (0,-1.3) circle (1.5pt);
			\draw [fill=black] (-.2,-1.2) circle (1.5pt);
			\draw [fill=black] (.2,-1.2) circle (1.5pt);
			\draw [fill=black] (.7,-1.2) circle (1.5pt);
			\draw [fill=black] (.5,-1.1) circle (1.5pt);
			\draw [fill=black] (.7,-.75) circle (1.5pt);
			\draw [fill=black] (1.1,-.8) circle (1.5pt);
			\draw [fill=black] (1.2,-1) circle (1.5pt);

			\end{tikzpicture}
			\caption{A copy of $S_{12}^{(4)}$, from Theorem~\ref{thm:triangle} }
			\label{starlike}
		\end{center}
	\end{figure}
	
	Now, let $H$ be a minimal Berge-$K_3$-saturated $k$-graph on $n$ vertices. 
	If $H$ is connected, then $H$ has at least $\CL{\frac{n-1}{k-1}}$ edges and we are done.
	Suppose now that $H$ has $j\geq 2$ components $H_1,\dots,H_j$, with $H_i$ on $n_i$ vertices for $1\leq i\leq j$. 
	By Lemma \ref{noMinDiv}, $H$ does not contain any edge minimal components $H_i$ such that $(k-1) | (n_i-1)$. 
	
	By re-indexing the $H_i$'s if necessary, we can assume that $H$ has $\ell$ non-edge-minimal components where $(k-1) | (n_i-1)$, $H_1,\dots,H_\ell$, that $H$ has $s$ non-edge-minimal components where $(k-1) \nmid (n_i-1)$ $H_{\ell+1},\dots,H_{\ell+s}$, that $H$ has $t$ isolated vertices, $H_{\ell+s+1},\dots,H_{\ell+s+t}$, and that $H$ has $j-\ell-s-t$ edge-minimal components where $(k-1) \nmid (n_i-1)$, $H_{\ell+s+t+1},\dots,H_j$.
	
	Now we introduce a parameter that will help us compare the number of edges in $H$ to the number of edges in $S_n^{(k)}$. For a $k$-uniform hypergraph $G$, let  $O_{G}:=\sum_{v\in V(G)}(d(v)-1)$. This parameter will count the number of times a vertex is covered by more than one edge. Note that $O_G=O_{S_{|V(G)|}^{(k)}} \mod k$ since $O_G=k|E(G)|-|V(G)|$ and $O_{S_{|V(G)|}^{(k)}}=k|E(S_{|V(G)|}^{(k)})|-|V(G)|$. Also note that for our construction $S_n^{(k)}$ above, we have $O_{S_n^{(k)}}\leq \left\lceil\frac{n-1}{k-1}\right\rceil+(k-2)-[(n-1)\mod (k-1)]$.

We will calculate a bound for $O_{H_i}$ for each connected component $H_i$ individually based on the following procedure: Let $E^*\subseteq E(H_i)$ be a smallest set of edges such that the hypergraph $H^*=(V(H_i),E^*)$  is connected. There is an ordering of the edge of $E^*$ such that each edge except the first intersects some preceding edge at least once. Thus, each edge except one in this ordering must contribute at least one to the parameter $O_{H_i}$, and in fact if $(k-1)\nmid (n_i-1)$, the edges of $E^*$ must give at least an additional contribution of $k-1-[(n_i-1) \mod (k-1)]$ since some edges must overlap preceding edges in more than one vertex. Finally, the edges in $E(H_i)\setminus E^*$ will contribute $k$ to this parameter for each edge.
	
For the components $H_i$ of $H$ we get the following bounds on $O_{H_i}$:
{\small
\[
O_{H_i}\geq\begin{cases} 
      \left\lceil\frac{n_i-1}{k-1}\right\rceil -1+k & \text{if }  1\leq i\leq \ell \text{ {\footnotesize (non-minimal, divisible)}}\\[0.3em]
       \left\lceil\frac{n_i-1}{k-1}\right\rceil +k-2-[(n_i-1)\! \!\mod (k-1)]+k & \text{if }\ell+1\leq i\leq \ell +s\text{ {\footnotesize (non-minimal, non-divisible)}}\\[0.3em]
            -1 & \text{if } \ell+s+1\leq i\leq \ell+s+t  \text{ {\footnotesize(isolated vertices)}}\\[0.3em]
       \left\lceil\frac{n_i-1}{k-1}\right\rceil +k-2-[(n_i-1)\!\! \mod (k-1)] & \text{if } \ell+s+t+1\leq i\leq j\text{ {\footnotesize (minimal, non-divisible).}}\\
   \end{cases}
   \]}
Let $r_i=(n_i-1) \mod (k-1)$ for $1\leq i\leq j$ and let $\sum_{i=1}^jr_i=\alpha(k-1)+r^*$ for some $\alpha$ and $r^*=\left(\sum_{i=1}^jr_i\right)\mod (k-1).$
Note that the average remainder of a component that is not an isolated vertex is $\frac{1}{j-t}\sum_{i=1}^jr_i\geq \frac{\alpha(k-1)}{j-t}$, but any such remainder is less than $k-1$, so $\alpha<j-t$.

Then, we have
\[
\begin{array}{ccl}
\sum_{i=1}^jO_{H_i} & \geq & \sum_{i=1}^{\ell}\left( \left\lceil\frac{n_i-1}{k-1}\right\rceil -1+k\right)+\sum_{i=\ell+1}^{\ell+s}\left( \left\lceil\frac{n_i-1}{k-1}\right\rceil +k-2-r_i+k \right)\\
& & +\sum_{i=\ell+s+1}^{j-t}\left(\left\lceil\frac{n_i-1}{k-1}\right\rceil +k-2-r_i\right)-t\\[0.5em]
 & = & \sum_{i=1}^{j}\left\lceil\frac{n_i{-}1}{k{-}1}\right\rceil +\ell(k{-}1)+s(k{-}2{+}k)+(j{-}t{-}\ell{-}s)(k{-}2)-t-\sum_{i=1}^{j}r_i\\[0.5em]
 & \geq & \left\lceil\frac{n-j}{k-1}\right\rceil-\ell-t +(\ell+s)k+(j-\ell-t)(k-2)-\sum_{i=1}^{j}r_i\\[0.5em]
 & \geq & \left\lceil\frac{n-1}{k-1}\right\rceil-1+k-2 -\left(\sum_{i=1}^{j}r_i+j-1-\alpha(k-1)\right)+j-1-\alpha(k-1)\\ 
& & +(j-1)(k-2)+sk+\ell-t(k-1)\\[0.5em]
& \geq & O_{S_n^{(k)}} +(j-\alpha-t)(k-1)+\ell+k(s-1)\\[0.5em]
& \geq & O_{S_n^{(k)}} +k-1+\ell+k(s-1)\\[0.5em]
& \geq & O_{S_n^{(k)}}-1.
\end{array}
\]
	Furthermore, as $O_H-O_{S_n^{(k)}}=0 \mod k$, and $k>1$, we have $O_H\neq O_{S_n^{(k)}}-1$. 
Hence $O_H\geq O_{S_n^{(k)}}$, and so $E(H)\geq E(S_n^{(k)})$.
\end{proof}

The upper bound for the $\sat_k(n,\text{Berge-}K_3)$ is based on the existence of a saturated construction with the desired number of edges. For many $k$ and $n$ combinations, this construction seems to be unique, but this is not true for some uniformity and size pairs. For example, for $k=3$ and $n=8$ there is a second construction, a linear cycle, which also uses $\left\lceil \frac{8-1}{3-1}\right\rceil=4$ edges, see Figure \ref{k3n8}.

\begin{figure}[h]
	\begin{center}
		\begin{tikzpicture}[scale=2]
		\clip(-1,-1.5) rectangle (5,.5);
		\draw (-.2,0) .. controls (-2,-1.75) and (0,-1.85) .. (.15,0);
		\draw (-.2,0) .. controls (0,.1) .. (.15,0);
		\draw (-.2,0) .. controls (-1,-2) and (1,-2) .. (.2,0); 
		\draw (-.2,0) .. controls (0,.1) .. (.2,0);
		\draw (-.15,0) .. controls (0,-1.85) and (2,-1.75) .. (.25,0);
		\draw (-.15,0) .. controls (0,.1) .. (.25,0);
		\draw (-.15,0) .. controls (.75,-1.75) and (3,-1.65) .. (.25,0);
		\draw (-.15,0) .. controls (0,.1) .. (.25,0);
		
		\draw [fill=black] (0,-0.1) circle (1.5pt);
		\draw [fill=black] (-.8,-1) circle (1.5pt);
		\draw [fill=black] (-.55,-1) circle (1.5pt);
		\draw [fill=black] (-.15,-1.2) circle (1.5pt);
		\draw [fill=black] (.15,-1.2) circle (1.5pt);
		\draw [fill=black] (.7,-1.2) circle (1.5pt);
		\draw [fill=black] (.7,-.75) circle (1.5pt);
		\draw [fill=black] (1.2,-1) circle (1.5pt);
		
		\draw (4,0) circle [x radius=.75cm, y radius=.15cm, rotate=0];
		\draw (4,-1) circle [x radius=.75cm, y radius=.15cm, rotate=0];
		\draw (3.5,-.5) circle [x radius=.75cm, y radius=.15cm, rotate=90];
		\draw (4.5,-.5) circle [x radius=.75cm, y radius=.15cm, rotate=90];
		
		\draw [fill=black] (4,0) circle (1.5pt);
		\draw [fill=black] (4,-1) circle (1.5pt);
		\draw [fill=black] (3.5,-.5) circle (1.5pt);
		\draw [fill=black] (4.5,-.5) circle (1.5pt);
		\draw [fill=black] (3.5,0) circle (1.5pt);
		\draw [fill=black] (3.5,-1) circle (1.5pt);
		\draw [fill=black] (4.5,0) circle (1.5pt);
		\draw [fill=black] (4.5,-1) circle (1.5pt);

		\end{tikzpicture}
		\caption{Minimal Berge-$K_3$ saturated hypergraphs on $8$ vertices with uniformity $3$.}
		\label{k3n8}
	\end{center}
\end{figure}
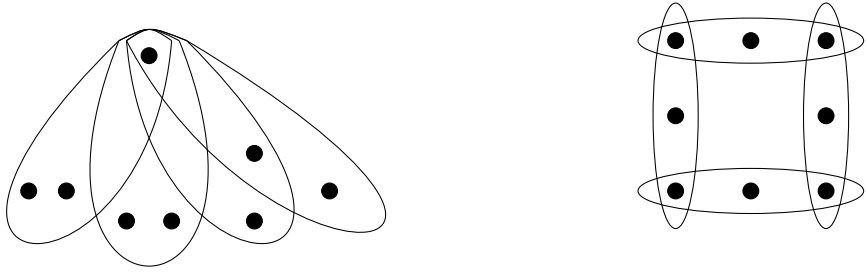

We now turn our attention to an upper bound for cycles. We present three constructions based on the relationship between $k$ and $m$, then prove that these constructions are indeed saturated.

\begin{construction}\label{Cycle construction 1}
	If $k\geq m-1$, let $S_{n,m}^k$ be a $k$-uniform hypergraph containing $\FL{\frac{n-(m-2)}{k-(m-2)}}$ hyperedges that pairwise intersect in a set $I$ of $m-2$ vertices and
	the remaining $((n-(m-2))\mod{(k-(m-2))})$ vertices are in an edge that contains $I$, and some vertices not in $I$ from exactly one edge.
\end{construction}

It is worth noting here that $S_{n,3}^k=S_{n}^k$ from Theorem \ref{thm:triangle}, so our work for cycles can be thought of as a generalization of our work done on triangles.

\begin{construction}\label{cycle construction 2}
	Let $k=m-2$ and $n\geq m^2$. Let $F_{n,m}^{(m-2)}$ be the $k$-uniform hypergraph obtained by first identifying $\FL{\frac{n-1}{m-2}}$ cliques of $m-1$ vertices at a single vertex $v$. If $(m-2)\nmid (n-1)$, then we will absorb the remaining vertices into one clique. Let $r=(n-1)\mod (m-2)$ and let $K$ be one of the $(m-1)$-cliques. Let $x\neq v$ be some vertex in $K$. Add $\left\lceil\frac{r}{k-2}\right\rceil$ edges, $e_1,\dots,e_r$, that contain both $x$ and $v$ so that the $r$ remaining vertices are each in exactly one of these edges, and the edges only contain these $r$ vertices and vertices from $K$. 
\end{construction}

\begin{construction}\label{cycle construction 3}
	Let $k\leq m-3$, $\ell=\max\{m/2+1,k+1\}$ and $n\geq \ell^2$. Let $F_{n,m}^{(k)}$ be the $k$-uniform hypergraph obtained by identifying $\FL{\frac{n-1}{\ell-1}}$ cliques of $\ell$ or $\ell+1$ vertices at a single vertex $v$. There are exactly $(n-1)\mod{(\ell-1)}$ cliques on $\ell+1$ vertices.
\end{construction}

In Construction \ref{cycle construction 3}, we could have chosen any $\ell\in [m/2+1,m-1]$ as long as $\ell\geq k+1$ and gotten a saturated construction, but minimizing $\ell$ under these constraints gives us the smallest number of edges.

\begin{theorem}\label{thm:cycle}
	Let $m\geq 4$. If $k\geq m-1$ and $n>m(k-(m-2))+(m-2)$, then
	\begin{equation}\label{cycle equation 1}
	\sat_k(n,\text{Berge-}C_m)\leq \CL{\frac{n-m+2}{k-m+2}}.
	\end{equation}
	If $k=m-2$ and $n\geq m^2$,
	\begin{equation}\label{cycle equation 2}
	\sat_k(n,\text{Berge-}C_m)\leq \FL{\frac{n-1}{m-2}}\binom{m-1}{k}+\frac{(n-1)\mod{(m-2)}}{k-2}.
	\end{equation}
	If $k\leq m-3$, $\ell=\max\{m/2+1,k+1\}$ and $n\geq \ell^2$, then
	\begin{equation}\label{cycle equation 3}
	\sat_k(n,\text{Berge-}C_m)\leq \FL{\frac{n-1}{\ell-1}}\binom{\ell}{k}+\left((n-1)\mod{(\ell-1)}\right)\binom{\ell}{k-1}.
	\end{equation}
\end{theorem}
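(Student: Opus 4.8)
The plan is to handle the three regimes $k\ge m-1$, $k=m-2$ and $k\le m-3$ separately, but in each regime by the same three steps: (i) read the number of edges off the construction; (ii) show the construction is Berge-$C_m$-free; (iii) show that adding any non-edge creates a Berge-$C_m$. Step (i) is bookkeeping. In Construction~\ref{Cycle construction 1} there are $\FL{\frac{n-m+2}{k-m+2}}$ ``main'' edges through the common set $I$ of $m-2$ vertices, plus one further absorbing edge exactly when $(n-m+2)\bmod(k-m+2)>0$, which totals $\CL{\frac{n-m+2}{k-m+2}}$. In Construction~\ref{cycle construction 2} each of the $\FL{\frac{n-1}{m-2}}$ cliques on $m-1$ vertices is a complete $(m-2)$-uniform hypergraph with $\binom{m-1}{m-2}=\binom{m-1}{k}$ edges, together with $\CL{\tfrac{(n-1)\bmod(m-2)}{k-2}}$ absorbing edges $e_j$. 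In Construction~\ref{cycle construction 3}, of the $\FL{\frac{n-1}{\ell-1}}$ cliques exactly $(n-1)\bmod(\ell-1)$ have $\ell+1$ vertices and the rest have $\ell$; since $\binom{\ell+1}{k}=\binom{\ell}{k}+\binom{\ell}{k-1}$, the total is the displayed quantity. One checks in each case that the construction is well defined and uses every vertex, which is where the hypotheses $n>m(k-m+2)+(m-2)$, $n\ge m^2$ and $n\ge\ell^2$ enter; they also guarantee ``enough'' cliques for the routings below.

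For step (ii) I would use the following. A core vertex of a Berge-$C_m$ has degree at least $2$ in the host hypergraph, and the core graph $C_m$ is $2$-connected. In Constructions~\ref{cycle construction 2} and~\ref{cycle construction 3} distinct cliques meet only at the hub $v$, so $v$ is a cut vertex and the core of any Berge-$C_m$ lies inside a single clique; in Construction~\ref{cycle construction 2} the degree-$1$ ``new'' vertices attached by the edges $e_j$ are automatically excluded from the core, and each clique has only $m-1$ vertices, while in Construction~\ref{cycle construction 3} each clique has at most $\ell+1\le m-1$ vertices (here $k\le m-3$ forces $m\ge 6$, so $\ell=\max\{m/2+1,k+1\}\le m-2$). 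In either case a clique is too small to contain $m$ distinct core vertices. For Construction~\ref{Cycle construction 1} the argument is a counting one: each core vertex of a putative Berge-$C_m$ lies in the intersection of two of its (distinct) edges, and any two edges of the construction meet inside $I$, with the single exception of one intersecting pair that meets in $I$ together with a few extra vertices; as that pair can be consecutive on the cycle at most once, at least $m-1$ of the $m$ distinct core vertices lie in $I$, contradicting $|I|=m-2$.

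For step (iii) the basic tool is that a complete $k$-uniform hypergraph on at least $k+1$ vertices has a Hamilton Berge-path, and more generally a Berge-path on any prescribed number of its vertices between any two chosen endpoints (a short system-of-distinct-representatives argument); so inside a clique we may traverse exactly as many of its vertices as we wish. Let $e$ be a non-edge. In Construction~\ref{Cycle construction 1}, since $e$ is not already an edge and there are no isolated vertices, $e$ contains two vertices $x,y$ whose ``home'' edges $f,g$ (each of which contains $I$) are distinct; then route a Berge-$C_m$ whose core visits $I=\{u_1,\dots,u_{m-2}\}$ in order, using $m-3$ main edges for the consecutive pairs inside $I$, then $f$ for $u_{m-2}\to x$, then $e$ for $x\to y$, then $g$ for $y\to u_1$; the hypothesis on $n$ gives at least $m$ edges through $I$, enough to keep everything distinct. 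In Construction~\ref{cycle construction 3}, any $k$-subset of a single clique is already an edge (since $k\le\ell-1$ is less than the clique's size), so every non-edge $e$ joins two distinct cliques at vertices $a,b\neq v$; since $\ell\ge m/2+1$, two cliques glued at $v$ contain at least $m+1$ vertices, so there is a Berge-$P_m$ from $a$ to $b$ through $v$, and $e$ closes it to a Berge-$C_m$. Construction~\ref{cycle construction 2} is identical when $e$ joins two cliques; the remaining possibility, that $e$ lies inside the modified block $K\cup\{\text{new vertices}\}$, is handled by routing the cycle through $e$, the edges $e_j$ carrying the new vertices met by $e$, and one auxiliary clique to bring the length to exactly $m$ --- and $n\ge m^2$ ensures a spare clique is available.

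The main obstacle is step (iii) for Construction~\ref{cycle construction 2}: the absorbing edges $e_j$ destroy the clean statement ``every non-edge joins two complete cliques,'' so the saturation proof there splits into several subcases according to how $e$ meets $K\cup\{\text{new vertices}\}$, each needing its own explicit Berge-$C_m$. A pervasive secondary task in all three regimes is certifying that the edges produced by the routings in step (iii) are genuinely distinct and that the cycle produced has length exactly $m$ rather than more; this is routine given the Hamilton-Berge-path fact and the size hypotheses on $n$, but it is where most of the writing goes.
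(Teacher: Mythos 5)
Your proposal follows the same three-step strategy as the paper's proof — reading off the edge count from each construction, showing each is Berge-$C_m$-free via the cut vertex $v$ (or, for Construction~\ref{Cycle construction 1}, a count of which vertices can have degree at least two), and then routing an explicit Berge-$P_m$ from two well-chosen vertices of the added edge $e$ and closing it with $e$. The only inessential deviation is in Construction~\ref{cycle construction 2}, where the paper routes the cycle entirely within $K$ and the absorbing edges $e_1,\dots,e_r$ rather than borrowing an auxiliary clique; both work, and your version is otherwise a faithful match.
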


\begin{proof}
	First we will prove \eqref{cycle equation 1}. Assume $k\geq m-1$ and $n>m(k-(m-2))+(m-2)$.  We need to show that $S=S_{n,m}^{(k)}$ from Construction \ref{Cycle construction 1} is indeed Berge-$C_m$-saturated. If $(k-m+2)\nmid(n-m+2)$, let $e_1$ and $e_2$ be the two edges of $S$ that intersect outside of $I$. To see that $S$ contains no Berge-$C_m$, first note that the only vertices of degree at least $2$ are in either $I$ or $e_1\cap e_2$. Since $|I|=m-2$, any Berge-$C_m$ would need to use at least two vertices from $e_1\cap e_2$, but there are only two edges incident to any such pair of vertices, so they could not both be used as the degree $2$ vertices in the same Berge-$C_m$.
	
	Let $e\in E(\overline{S})$. $e$ must contain at least two vertices outside of $I$. Let $x$ be one such vertex, and let $e'$ be an edge of $S$ such that $x\in e'$. Note that $e\setminus I$ cannot be contained in $e'$ since $I\subseteq e'$, and so that would imply that $e=e'$. Thus, there is some $y\in e\setminus I$ such that $y\not \in e'$. Let $e''$ be some edge that contains $y$. Then by our choice of $n$, there are enough edges incident with $I$ to create a Berge-$P_m$ in $S$ that goes from $x$ to $y$, using all the vertices of $I$ in the middle. With the addition of $e$, this path can be extended to a Berge-$C_m$ in $S+e$. Thus $S$ is saturated. This proves \eqref{cycle equation 1}.
	
	Now let us focus on \eqref{cycle equation 2}. Let $k=m-2$ and $n\geq m^2$. Let $F=F_{n,m}^{(m-2)}$, $v$, $r$, $K$, $x$ and $e_1,\dots,e_r$ be as in Construction \ref{cycle construction 2}. Each clique has $m-1$ vertices in it, so no clique contains a Berge-$C_m$, and since $v$ is the only vertex incident with more than one clique, no cycles use vertices from more than one clique. Finally, the $r$ vertices not in any $(m-1)$-clique are all of degree $1$, so they cannot be used in a cycle, so $F$ does not contain a Berge-$C_m$.
	
	Now let $e\in E(\overline{F})$. If $e$ contains two vertices from different $(m-1)$-cliques, say $u_1$ and $u_2$, then there is a Berge-$P_m$ from $x$ to $y$ in $F$ that uses as many vertices in each $(m-1)$-clique as needed. We can then use $e$ to close up the cycle, giving us a Berge-$C_m$ in $F+e$. This also works if we have one vertex from some $(m-1)$-clique that is not $K$, and one of the $r$ vertices in no $(m-1)$-clique. If $e$ contains a vertex in $K$, say $u_1$ and one of the $r$ vertices, say $u_2\in e_1$, in no $\ell$-clique, then there is a Berge $P_{m-1}$ from $u_1$ to $x$ inside $K$, and we can use $e_1$ to extend this from $x$ to $u_1$, so there is a $F+e$ contains a Berge-$C_m$. Finally, if $e$ contains only vertices from the $r$ vertices that are not in any $(m-1)$-clique, $e$ must contain two vertices from two different edges, say $u_1\in e_1$ and $u_2\in e_2$. In this case, we can find a Berge-$P_{m-2}$ in $K$ from $v$ to $x$, then extend each end of this path using $e_1$ and $e_2$ to build a Berge-$P_m$ that goes from $u_1$ to $u_2$ in $F$. Thus, $e$ can be used to close up the cycle, so $F+e$ contains a Berge-$C_m$. The only other possibility for $e$ is that $e$ is contained in some $(m-1)$-clique, but all those edges are already present in $F$, so we are done.
	
	Finally, we will show \eqref{cycle equation 3}. Let $k\leq m-3$, $\ell=\max\{m/2+1,k+1\}$ and $n\geq \ell^2$. Let $F=F_{n,m}^{(k)}$ and $v$ be as in Construction \ref{cycle construction 3}. First note that since $k\leq m-3$, $\ell\leq m-2$. Thus, the $\ell$-cliques and the $(\ell+1)$-cliques do not have enough vertices to contain a Berge-$C_m$. Further, no cycle can use vertices from two different cliques since $v$ is the only vertex incident with more than one clique. Thus $F$ does not contain a Berge-$C_m$.
	
	Now, let $e\in E(\overline{F})$. $e$ must contain two vertices from two different cliques, say $x$ and $y$. Since $\ell\geq m/2+1$, and thus $2\ell-1> m$, using the vertices from these two cliques, we can build a Berge-$P_m$ from $x$ to $y$. We can then use $e$ to complete the cycle, so $F+e$ contains a Berge-$C_m$, and thus $F$ is saturated. 
	
	The final thing to note is that $F$ has the number of edges claimed in \eqref{cycle equation 3} since each of the $\FL{\frac{n-1}{\ell-1}}$ cliques contain $\ell$ vertices, they contain a total of $\FL{\frac{n-1}{\ell-1}}\binom{\ell}{k}$ edges, and additionally $(n-1)\mod{(\ell-1)}$ of these cliques each have one extra vertex, which is in $\binom{\ell}{k-1}$ edges.
\end{proof}

Finally, we turn to the saturation number for stars on $k+2$ vertices. A $k$-uniform tight cycle is a hypergraph whose vertex set has a cyclic ordering such that any $k$ consecutive vertices form a hyperedge, and these are the only hyperedges. Here the construction will be a tight cycle along with $k-1$ isolated vertices.

\begin{theorem}\label{thm:k+1star}
	 If $k\geq 3$, then $\sat_k(n,\text{Berge-}K_{1,k+1})=n-k+1$ for $n\geq k^2$.
\end{theorem}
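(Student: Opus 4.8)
The plan is to first reduce ``being Berge-$K_{1,k+1}$-free'' to a clean degree condition, and then handle the upper and lower bounds separately.

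\textbf{Step 1 (key lemma).} I would first prove that a $k$-uniform hypergraph $H$ is Berge-$K_{1,k+1}$-free if and only if $\Delta(H)\le k$, i.e.\ no vertex lies in $k+1$ distinct edges. One direction is immediate since the centre of a Berge-$K_{1,k+1}$ must lie in $k+1$ distinct hyperedges. For the converse, suppose $v$ lies in distinct edges $e_1,\dots,e_d$ with $d\ge k+1$ and set $L_i=e_i\setminus\{v\}$, a $(k-1)$-set; the $L_i$ are distinct because the $e_i$ are. Finding a Berge-$K_{1,k+1}$ centred at $v$ is exactly finding a partial system of distinct representatives of size $k+1$ for the family $\{L_1,\dots,L_d\}$, and by the defect form of Hall's theorem such a partial SDR exists unless there is a nonempty $S\subseteq[d]$ with $|\bigcup_{i\in S}L_i|\le |S|-d+k$. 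Since $|\bigcup_{i\in S}L_i|\ge k-1$ this forces $|S|\in\{d-1,d\}$. If $|S|=d$ then all $d$ distinct $L_i$ lie in a common set of size $\le k$, impossible since a $k$-set has only $k$ distinct $(k-1)$-subsets while $d\ge k+1$; if $|S|=d-1$ then $d-1\ge k\ge 2$ of the distinct $L_i$ coincide, also impossible. Hence the SDR exists and $H$ contains a Berge-$K_{1,k+1}$. I expect this lemma, and in particular ruling out these two ``bad'' subfamilies, to be the main obstacle, though it becomes short once one pigeonholes over subsets of a $k$-set.

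\textbf{Step 2 (upper bound).} I would take $H$ to be the disjoint union of a $k$-uniform tight cycle on $n-k+1$ vertices (well-defined, with every vertex of degree exactly $k$, since $n\ge k^2$ gives $n-k+1\ge k+1$) together with $k-1$ isolated vertices, so $|E(H)|=n-k+1$. As $\Delta(H)=k$, Step 1 shows $H$ is Berge-$K_{1,k+1}$-free. For saturation, take any $e\in E(\overline H)$: since $|e|=k>k-1$, the edge $e$ must contain a tight-cycle vertex $v$, which has degree $k$ in $H$ and hence degree $k+1$ in $H+e$, so by Step 1 applied to $H+e$ the hypergraph $H+e$ contains a Berge-$K_{1,k+1}$. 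Therefore $\sat_k(n,\text{Berge-}K_{1,k+1})\le n-k+1$.

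\textbf{Step 3 (lower bound).} Let $H$ be Berge-$K_{1,k+1}$-saturated on $n$ vertices; by Step 1, $\Delta(H)\le k$. Put $A=\{v\in V(H):\deg_H(v)\le k-1\}$ and $a=|A|$. If some $k$-subset $S\subseteq A$ were not an edge, then adding $S$ raises the degree of each vertex of $S$ to at most $k$, so $\Delta(H+S)\le k$ and by Step 1 $H+S$ is Berge-$K_{1,k+1}$-free, contradicting saturation; hence every $k$-subset of $A$ is an edge of $H$. Consequently each $v\in A$ lies in at least $\binom{a-1}{k-1}$ edges, so $\binom{a-1}{k-1}\le k-1<\binom{k}{k-1}$, forcing $a\le k$. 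Finally I would bound $k|E(H)|=\sum_{v}\deg_H(v)$ by cases: if $a\le k-1$ then at least $n-k+1$ vertices have degree exactly $k$, so $k|E(H)|\ge k(n-k+1)$; if $a=k$ then the unique $k$-subset of $A$, namely $A$ itself, is an edge, so every vertex of $A$ has degree at least $1$ and $k|E(H)|\ge k(n-k)+k=k(n-k+1)$. In either case $|E(H)|\ge n-k+1$, which together with Step 2 proves the theorem.
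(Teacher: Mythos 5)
Your proposal is correct and, at the core, matches the paper's strategy: the upper bound comes from the same tight-cycle-plus-$(k-1)$-isolated-vertices construction, and the lower bound from the same observation that vertices of degree $\le k-1$ must pairwise form edges, followed by a degree-sum count. What you do differently, and what I think is a genuine improvement, is to isolate the clean degree characterization as a standalone lemma: a $k$-uniform hypergraph is Berge-$K_{1,k+1}$-free if and only if $\Delta\le k$, proved via the defect form of Hall's theorem (and a short case analysis ruling out deficient subfamilies, using the fact that a $k$-set has only $k$ distinct $(k-1)$-subsets and a $(k-1)$-set only one). The paper leaves this characterization implicit: for the upper bound it exhibits a Berge-$K_{1,k+1}$ in $\mathcal{C}+e$ by hand (choosing edges of the tight cycle carefully around $v$), and for the lower bound it invokes, without fully spelling it out, the fact that adding an edge of low-degree vertices cannot create a Berge-star. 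Your lemma buys you a one-line proof of both the freeness and the saturation of the construction (just compare maximum degrees before and after adding $e$), and it makes the lower-bound step ``every $k$-subset of $A$ is an edge'' transparent. Your binomial bound $\binom{a-1}{k-1}\le k-1$ forcing $a\le k$ is also a bit more explicit than the paper's ``clique of $k$ vertices'' phrasing, and the case split $a\le k-1$ versus $a=k$ recovers the paper's two counting cases exactly. In short: same route, but your Hall-type lemma is a cleaner and more reusable encapsulation of the step the paper handles informally.
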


\begin{proof}

	Let $C_{n-k+1}^{(k)}$ be a $k$-uniform tight cycle on $n-k+1$ vertices. Let $\mathcal{C}$ be the hypergraph on $n$ vertices that has $C_{n-k+1}^{(k)}$ as a component along with $k-1$ isolated vertices. We will first show that $\mathcal{C}$ is saturated. Indeed, given any edge $e\not\in E(\mathcal{C})$, $e$ must contain some vertex $v$ in the tight cycle. Let $\{u_1,\dots,u_{k-1}\}$ be the $k-1$ vertices preceding $v$ in the tight cycle.  We can assume that $e\neq v\cup\{u_1,\dots,u_{k-1}\}$ since this edge is already in $\mathcal{C}$. Let $w\in e$ be some vertex such that $w\neq u_i$ for $1\leq i\leq k-1$. Then $\mathcal{C}+e$ contains a Berge-$K_{1,k+1}$ since we can take the edges containing the pairs $vu_i$ for $1\leq i\leq k-1$, without using the edge $e'$ of $C$ that contains $v$ and the $k-1$ vertices succeeding $v$, the edge containing $vw$ and then the edge containing $vx$ for some $x\in e'$, $x\neq w$. Thus $\mathcal{C}+e$ contains a Berge-$K_{1,k+1}$ for any edge $e\not\in E(\mathcal{C})$. Hence, $\sat_k(n,\text{Berge-}K_{1,k+1})\leq E(\mathcal{C})=n-(k-1)$.
	
	Now, let $H$ be a $k$-uniform Berge-$K_{1,k+1}$-saturated graph.
	If there are at least $k$ vertices all of degree at most $k-1$, then these vertices must all be in a clique together, since otherwise we could add an edge only containing such vertices, which would not create a Berge-$K_{1,k+1}$. Thus, $H$ either consists of a clique of $k$ vertices each with degree at least $1$ and $n-k$ vertices of degree at least $k$, or $H$ has of $\ell<k$ vertices of degree less than $k$ and $n-\ell$ vertices of degree at least $k$. In the former case, if we count degrees, we have that
	\[
	|E(H)|\geq \frac{k(1)+ (n-k)k}k=n-k+1.
	\]
	In the latter case, we have
	\[
	|E(H)|\geq \frac{(n-\ell)k}{k} \geq n-k+1.
	\]
\end{proof}
The following theorem establishes linearity for a larger class of stars than Theorem \ref{thm:k+1star}. However, we do not believe the bound to be tight.

\begin{theorem}\label{thm:mstar}
	If $k\leq m-1$, $\sat_k(n,\text{Berge-}K_{1,m})\leq \CL{\frac{n}{m}}\binom{m}{k}$.
\end{theorem}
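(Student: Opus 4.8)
The plan is to prove the bound by exhibiting a single Berge-$K_{1,m}$-saturated $k$-uniform hypergraph on $n$ vertices with at most $\left\lceil\frac{n}{m}\right\rceil\binom{m}{k}$ edges. Write $n=qm+r$ with $0\le r<m$, and let $H$ be the vertex-disjoint union of $q$ copies of the complete $k$-uniform hypergraph on $m$ vertices, together with, when $r>0$, one copy of the complete $k$-uniform hypergraph on $r$ vertices (this last block is simply $r$ isolated vertices when $r<k$). Then $|E(H)|=q\binom{m}{k}+\binom{r}{k}$, and since $\binom{r}{k}\le\binom{m}{k}$ (with $\binom{r}{k}=0$ when $r<k$) this is at most $\left\lceil\frac{n}{m}\right\rceil\binom{m}{k}$, with equality when $r=0$. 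So it remains to check that $H$ is Berge-$K_{1,m}$-saturated.

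First I would note that $H$ is Berge-$K_{1,m}$-free: any Berge-$K_{1,m}$ is connected (each of its hyperedges contains the image of the centre) and has $m+1$ distinct vertices (the centre together with the $m$ leaves), whereas every component of $H$ spans at most $m$ vertices, so no subhypergraph of $H$ can be a Berge-$K_{1,m}$. For saturation, if $n<m$ then $q=0$ and $H$ is itself a complete $k$-uniform hypergraph, so $E(\overline{H})=\emptyset$ and there is nothing to check; so assume $n\ge m$. Let $e\in E(\overline{H})$. A block of size at least $k$ already contains all of its $k$-subsets, and a block of size less than $k$ contains no $k$-subset at all, so $e$ is not contained in a single block; since the blocks partition $V(H)$, $e$ meets at least two of them, and because at most one block has size different from $m$, I can fix a block $C$ with $|C|=m$, a vertex $v\in e\cap C$, and a vertex $w\in e\setminus C$.

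To finish I would build a Berge-$K_{1,m}$ centred at $v$ inside $H+e$. List $C\setminus\{v\}$ cyclically as $x_1,\dots,x_{m-1}$ and, for $1\le i\le m-1$, put $f_i=\{v\}\cup\{x_i,x_{i+1},\dots,x_{i+k-2}\}$ with subscripts read modulo $m-1$. Because $1\le k-1\le m-2$, these cyclic windows of length $k-1$ are pairwise distinct (their complementary arcs, of length $m-k\ge1$, are distinct), so $f_1,\dots,f_{m-1}$ are $m-1$ distinct edges of $C$, with $v,x_i\in f_i$. Taking $f_m:=e$ and using $w\notin C$, the edges $f_1,\dots,f_m$ together with the representatives $x_1,\dots,x_{m-1},w$ — which are $m$ distinct vertices, all different from $v$ — form a Berge-$K_{1,m}$. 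Hence $H+e$ contains a Berge-$K_{1,m}$ for every $e\in E(\overline{H})$, so $H$ is saturated.

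The only step that needs any care is the last one: confirming that the complete $k$-uniform hypergraph on $m$ vertices contains a Berge-$K_{1,m-1}$ centred at an arbitrary vertex, equivalently that there are $m-1$ distinct hyperedges through $v$ admitting a system of distinct representatives among the remaining $m-1$ vertices. The cyclic-window choice makes this explicit and is exactly where the hypothesis $k\le m-1$ enters; alternatively one can verify Hall's condition directly, since the number of edges through $v$ meeting a given $s$-subset of the other vertices equals $\binom{m-1}{k-1}-\binom{m-1-s}{k-1}$, which is at least $s$ for all $1\le s\le m-1$ (using $\binom{m-1}{k-1}\ge m-1$ to handle the large-$s$ range). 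Everything else is routine bookkeeping.
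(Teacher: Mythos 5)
Your proposal is correct and takes essentially the same approach as the paper: the same construction (disjoint $k$-uniform $m$-cliques plus a smaller leftover block), the same freeness argument (every component has at most $m$ vertices so cannot host a Berge-$K_{1,m}$), and the same saturation argument (any new edge bridges two blocks, one of which is a full $m$-clique containing a Berge-$K_{1,m-1}$ that extends to a Berge-$K_{1,m}$). You merely make explicit, via the cyclic-window system of distinct representatives, the existence of the Berge-$K_{1,m-1}$ centred at an arbitrary vertex of $K_m^{(k)}$ that the paper asserts without proof.
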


\begin{proof}
	Let $\mathcal{K}_n^{(k)}$ be the  $k$-uniform hypergraph that is the disjoint union of $\FL{\frac{n}{m}}$ cliques on $m$ vertices, and a clique on $n\mod m$ vertices if $n\mod m \ge k$, or $n\mod m$ isolated vertices if $n\mod m<k$. 
	Note that the maximum size of the neighborhood (the set of vertices adjacent to a given vertex) of any vertex in $\mathcal{K}_n^{(k)}$ is $m-1$, and so the graph does not contain a Berge-$K_{1,m}$.
	Consider adding an edge $e\not\in E(\mathcal{K}_n^{(k)})$. 
	It must contain at least one vertex from a clique on $m$ vertices, say $v$, and a vertex in a different component, say $u$. 
	The clique on $m$ vertices contains a Berge-$K_{1,m-1}$ with $v$ at the center, so adding $\{u, v\}$ yields a Berge-$K_{1,m}.$
	Since $E(\mathcal{K}_n^{(k)}) \le \CL{\frac{n}{m}}\binom{m}{k}$, we have that $\sat_k(n,\text{Berge-}K_{1,m})=O(n)$ when $m-1\ge k$.
\end{proof}

\section{Future Work}
There are many directions that can be explored involving Berge saturation. The most pressing question in the authors' minds is about the asymptotic growth of Berge saturation numbers. For classical saturation, we have that the saturation numbers grow at most linearly with $n$.

\begin{theorem}[K\'aszonyi and Tuza \cite{KT_minedge_1986}]
	For any fixed finite family of graphs $\mathcal{F}$,
	\[
	\sat(n,\mathcal{F})=O(n).
	\]
\end{theorem}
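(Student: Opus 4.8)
The plan is to prove the upper bound; there is nothing to prove in the other direction, and since $\sat(n,\mathcal{F})$ is a \emph{minimum}, it suffices to produce, for each fixed finite $\mathcal{F}$, one $\mathcal{F}$-saturated graph on $n$ vertices with at most linearly many edges. This is the argument of K\'aszonyi and Tuza, and I would organize it as follows. First, dispose of degenerate cases: if some member of $\mathcal{F}$ has no edges then no graph is $\mathcal{F}$-free and the statement is vacuous, while an isolated vertex in some $F\in\mathcal{F}$ may be deleted at a cost of $O(1)$ in the final bound; so assume every $F\in\mathcal{F}$ has an edge and no isolated vertex.

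Next I would introduce the constant governing the linear coefficient. For $F\in\mathcal{F}$ and an edge $xy\in E(F)$, look at $F-xy$ and let $S$ be a largest set that is independent in $F-xy$ and contains both $x$ and $y$ (such a set exists, since $\{x,y\}$ qualifies); call $|V(F)|-|S|$ the cost of the triple $(F,xy,S)$. Let $u=u(\mathcal{F})$ be the minimum cost over all such triples, attained by some $F^{*}$, $x^{*}y^{*}$, $S^{*}$. Since $\mathcal{F}$ is finite, $u$ is a constant, and $|V(F^{*})\setminus S^{*}|=u$. The point of this definition is that a copy of $F^{*}$ can be ``finished off'' by a single new edge playing the role of $x^{*}y^{*}$, provided the other endpoints are routed onto $u$ designated vertices.

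Then I would build $G_{n}$ from the fixed skeleton $F^{*}-x^{*}y^{*}$: designate the $u$ vertices of $V(F^{*})\setminus S^{*}$ as \emph{hubs}, and attach each of the remaining $n-O(1)$ vertices to the hub set (more precisely, to the hubs in the pattern induced by $F^{*}$ on $V(F^{*})\setminus S^{*}$), giving each new vertex a bounded number of edges. This makes $|E(G_{n})|\le u\cdot n+O(1)=O(n)$. For every non-edge $e=ab$ of $G_{n}$, both endpoints lie in the sparse part, and mapping $x^{*},y^{*}$ to $a,b$, the set $S^{*}$ to $\{a,b\}$ together with $|S^{*}|-2$ further sparse vertices, and $V(F^{*})\setminus S^{*}$ onto the hubs, exhibits a copy of $F^{*}$ (hence a member of $\mathcal{F}$) in $G_{n}+e$; this is exactly what the structure of $G_{n}$ was engineered to allow.

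The one nontrivial step, and the main obstacle, is showing $G_{n}$ itself contains no member of $\mathcal{F}$. Here the minimality in the definition of $u$ does the work: in any putative embedding of some $F\in\mathcal{F}$ into $G_{n}$, all but at most $u$ vertices of $F$ must land on the ``sparse'' part, which is essentially independent, so the preimage of that part is an independent set of $F$ realizing cost at most $u-1$ after accounting for the edge used to enter the sparse region, contradicting the choice of $u$. Making this counting airtight is delicate precisely because one must simultaneously keep the sparse part sparse enough (for the freeness count and the linear edge bound) and keep the hub attachments rich enough (so that every added edge completes $F^{*}$); calibrating $u$ and the hub/skeleton pattern to meet both demands, especially when $\mathcal{F}$ contains graphs of widely differing orders, is the crux, and everything else is bookkeeping.
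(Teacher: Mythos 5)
The paper only cites this result (to K\'aszonyi and Tuza) and does not prove it, so there is no in-paper argument to compare against; I'll assess your proposal directly. Your parameter $u$ and hub/sparse architecture are in the spirit of K\'aszonyi--Tuza, and you correctly identify $\mathcal{F}$-freeness of $G_n$ as the crux, but the step you defer is not mere bookkeeping: it fails. The counting you describe would give $F$-freeness only when $\alpha(F) < |V(F)| - u$, while the definition of $u$ only yields $\alpha(F) \geq |S^*|-1 = |V(F)|-u-1$, with no matching upper bound. Take $\mathcal{F}=\{K_{1,3}\}$: with center $c$ and leaves $a,b,d$, a largest independent set of $K_{1,3}-ca$ containing both $c$ and $a$ is $\{c,a\}$, so $u=2$, yet $\alpha(K_{1,3})=3>|V(F)|-u-1=1$. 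In any graph where $O(1)$ hubs each see $\Omega(n)$ sparse vertices, some hub has degree at least $3$ and hence spans a $K_{1,3}$, no matter what pattern is placed on the hubs.

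The tension is structural, not a calibration issue: your saturation step silently requires every non-edge of $G_n$ to join two sparse vertices, which forces all hubs to be joined to all sparse vertices and to one another, which is exactly what inflates the hub degrees. K\'aszonyi and Tuza close this gap with a genuine case analysis---roughly, for stars and more generally for graphs whose independence number is too large relative to $u$, one uses a bounded-degree construction (a disjoint union of small cliques, or an almost-regular graph) rather than a complete split graph. Without that separate idea the argument does not go through.
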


On the other hand, for $k$-uniform hypergraphs, saturation numbers grow with at most $n^{k-1}$.

\begin{theorem}[Pikhurko \cite{pikhurko1999minimum}]
	For any fixed finite family of $k$-uniform hypergraphs $\mathcal{F}^{(k)}$,
	\[
	\sat(n,\mathcal{F}^{(k)})=O(n^{k-1}).
	\]
\end{theorem}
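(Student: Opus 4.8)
The plan is to adapt the classical K\'aszonyi--Tuza method \cite{KT_minedge_1986} to the hypergraph setting. First, a few harmless reductions: we may assume $\mathcal{F}^{(k)}$ is an antichain under subhypergraph containment (discarding a member that contains another does not change which hypergraphs are $\mathcal{F}^{(k)}$-free, hence does not change the saturation number), and that every member has at least one edge (otherwise $\sat$ is $0$ or the statement is vacuous). Write $\mu=\min_{F\in\mathcal{F}^{(k)}}|V(F)|$ and $M=\max_{F\in\mathcal{F}^{(k)}}|V(F)|$; both are constants, and for $n$ below any fixed threshold $\sat_k(n,\mathcal{F}^{(k)})\le\binom{n}{k}$ is itself bounded by a constant. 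Thus it is enough to produce, for all sufficiently large $n$, a single $k$-uniform $\mathcal{F}^{(k)}$-saturated hypergraph with $O(n^{k-1})$ edges.

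The construction consists of a bounded ``core'' together with many ``peripheral'' vertices of low degree. Fix $F_0\in\mathcal{F}^{(k)}$ and an edge $e_0\in E(F_0)$; then $F_0-e_0$ is $\mathcal{F}^{(k)}$-free (a proper subhypergraph of $F_0$, with $\mathcal{F}^{(k)}$ an antichain) yet becomes a member once $e_0$ is restored. Extend $F_0-e_0$ to a maximal $\mathcal{F}^{(k)}$-free $k$-graph $C$ on a fixed vertex set $X$ of bounded size (chosen large enough to carry out the steps below). Take $V(H)=X\sqcup\{z_1,\dots,z_{n-|X|}\}$, place $C$ on $X$, and attach each $z_i$ to $X$ (and to a bounded neighbourhood among the other $z_j$'s) by a prescribed family of edges, each containing only a bounded number of peripheral vertices, so that each $z_i$ lies in $O(n^{k-2})$ edges. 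The attachment pattern is engineered so that a peripheral vertex can play the role of only a single ``cheap'' vertex inside any would-be copy of a member of $\mathcal{F}^{(k)}$, while being connected densely enough into the core that filling in any absent edge forces such a copy. Counting: $|E(H)|=|E(C)|+\sum_i O(n^{k-2})=O(1)+n\cdot O(n^{k-2})=O(n^{k-1})$.

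It remains to verify the two defining properties. For $\mathcal{F}^{(k)}$-freeness, any copy of a member $F$ uses at most $M$ vertices, hence at most $M$ peripheral vertices; the restriction on the attachment pattern lets one ``push'' each such peripheral vertex back into the core, producing a copy of $F$ inside a configuration built from $C$ that is $\mathcal{F}^{(k)}$-free --- a contradiction. For saturation, given a non-edge $f$ of $H$ one splits its vertices between $X$ and the periphery; using that $C$ is a \emph{maximal} $\mathcal{F}^{(k)}$-free graph (for the case $f\subseteq X$) and that every peripheral vertex is attached to the core in all the required ways (for the remaining cases), one completes $f$ to a copy of some member of $\mathcal{F}^{(k)}$, with $e_0$ (or a translate of it) in the role of $f$. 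The substantive difficulty --- and the reason only $O(n^{k-1})$, rather than $O(n)$, is claimed in general --- is the design of the attachment pattern: it must be simultaneously structured enough to avoid accidentally creating a member and rich enough to force a completion no matter how the vertices of the added edge are distributed between core and periphery (including the case of an edge lying entirely in the periphery). Executing this balance is exactly the content of Pikhurko's argument \cite{pikhurko1999minimum}; for $k=2$ it specializes to the K\'aszonyi--Tuza construction, where each peripheral vertex has bounded degree and the total is $O(n)$.
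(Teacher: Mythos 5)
The paper does not prove this statement; it is cited as a known result of Pikhurko, so there is no ``paper's own proof'' to compare against. Judged on its own terms, your proposal is a high-level sketch rather than a proof: you correctly set up the preliminary reductions (pass to an antichain so that $F_0-e_0$ is $\mathcal{F}^{(k)}$-free, discard small $n$), and your accounting of the intended construction (a bounded core $C$ on $X$ plus $n-|X|$ peripheral vertices each in $O(n^{k-2})$ edges, for a total of $O(n^{k-1})$) gives the right arithmetic if such a construction exists. But the construction itself --- the ``attachment pattern'' connecting the periphery to the core --- is never specified. You only state the two properties it should have (avoid creating a member; force a completion through any added edge), and then explicitly defer with ``Executing this balance is exactly the content of Pikhurko's argument.'' That is the entire substance of the theorem, and as written the verification paragraphs (``$\mathcal{F}^{(k)}$-freeness'' and ``saturation'') are not arguments but restatements of what the unspecified construction is supposed to accomplish. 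In particular, the saturation check for an added edge $f$ lying entirely in the periphery genuinely requires the attachment edges to interact among the peripheral vertices in a controlled way, and nothing in the proposal says how this is arranged or why it cannot accidentally create a member of $\mathcal{F}^{(k)}$ on its own. Without an explicit attachment rule and a concrete verification of both properties against it, this is a proof outline, not a proof.
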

Even though $k$-uniform Berge saturation is a special case of hypergraph saturation, the authors conjecture that asymptotically it resembles the graph case.

\begin{conjecture}
	For any fixed finite family of graphs $\mathcal{F}$,
	\[
	\sat_k(n,\text{Berge-}\mathcal{F})=O(n).
	\]
\end{conjecture}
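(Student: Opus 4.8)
The plan is to adapt the K\'aszonyi--Tuza argument for graph families to the Berge setting. Recall that for graphs one produces, for each finite family, an explicit saturated graph consisting of a bounded-size ``core'' joined to a sparse remainder (a union of small clusters together with independent vertices), and then checks by hand that this graph is $\mathcal{F}$-free and that adding any edge creates a member of $\mathcal{F}$. My goal is to build a $k$-uniform analogue: a hypergraph $H$ on $n$ vertices that is Berge-$\mathcal{F}$-free, Berge-$\mathcal{F}$-saturated, and has $O(n)$ edges (with the implied constant depending only on $\mathcal{F}$ and $k$). The main case of interest, and the one I would attack first, is that of a family of connected graphs each on at least three vertices; disconnected or very small graphs (such as matchings) contribute only a bounded matching-type piece and are handled separately, as in Theorem~\ref{thm:matchings}.

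For the core, fix $F^\ast\in\mathcal{F}$ and a vertex $v^\ast\in V(F^\ast)$ of minimum degree $d$ over the whole family, and set $F^\ast_0=F^\ast-v^\ast$. The construction of $H$ would have three parts: (i) a constant-size hypergraph $D$ realising a Berge copy of $F^\ast_0$; (ii) $d-1$ further hyperedges pre-installed in $D$, each containing one of the $d$ would-be neighbours of the copy of $v^\ast$ together with a common hub vertex $z$ and $k-2$ filler vertices; and (iii) a sparse part on the remaining $n-O(1)$ vertices --- a linear forest, or a ``sunflower'' of hyperedges through a bounded set containing $z$ --- chosen so that every vertex of $H$ lies in a short Berge path reaching a copy of an appropriate neighbour of $v^\ast$. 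The intended behaviour is that adding any hyperedge $e$ supplies, through $z$, the $d$-th hyperedge at the copy of $v^\ast$ and thereby completes a Berge-$F^\ast$; arranging this to work no matter where $e$ is placed is the hypergraph analogue of K\'aszonyi--Tuza's requirement that the sparse part be a union of clusters each joined to the core.

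Berge-freeness should be handled by a ``shadow'' argument: given a hypothetical Berge-$F'$ in $H$ with bijection $\phi$, replace each used hyperedge by the pair of core vertices it was expanded from; any vertex of $F'$ mapped to a filler vertex lies in a unique hyperedge of the copy and is therefore a leaf of $F'$, so deleting these vertices exhibits a copy, inside an auxiliary graph derived from $H$, of some graph obtained from $F'$ by repeatedly deleting leaves. Consequently $D$ and the sparse part must avoid not only $\mathcal{F}$ but the (still finite) closure $\mathcal{F}^-$ of $\mathcal{F}$ under leaf deletion --- the genuinely new constraint relative to the graph case. When $d\ge 2$ the family has no leaves, so $\mathcal{F}^-=\mathcal{F}$ and no filler vertex can appear in a Berge copy at all, which makes freeness essentially automatic and concentrates all the difficulty in the saturation step.

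The main obstacle is the tension between the two properties. Freeness wants the non-core part to be sparse and ``generic'' so that no leaf-deletion of a member of $\mathcal{F}$ accidentally appears; saturation wants it richly connected so that every vertex has enough pre-installed structure around it that one new hyperedge completes a Berge member. Two points make this hard for an arbitrary family. First, a single added hyperedge can serve as only one edge of $F^\ast$, so the core must already carry $d-1$ of the $d$ edges at the missing vertex while staying Berge-$\mathcal{F}^-$-free, and it is not obvious such a core exists for every family. Second --- and this is the regime the paper's path results already show is delicate --- when $d=1$ there is no hub of pre-installed edges to exploit, and one is forced into constructions resembling the linear trees of Sections~\ref{section path saturation numbers}--\ref{section lower bound}, tailored to the particular $F^\ast$. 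For these reasons I expect the conjecture to follow the graph theorem in outline but to need a genuinely new idea, or a family-by-family analysis of the kind carried out here for matchings, triangles, cycles, stars and paths, in order to control the $d=1$ regime uniformly; this is presumably why $\sat_k(n,\text{Berge-}\mathcal{F})=O(n)$ is stated only as a conjecture.
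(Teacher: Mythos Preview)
The statement you were given is a \emph{conjecture}, not a theorem: the paper offers no proof, only the remark that all the bounds established in the paper are linear and that the authors have found ad hoc techniques for some further classes but ``no bound that works for all graphs and all uniformities.'' So there is nothing in the paper to compare your argument against.

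Your write-up is, appropriately, a research outline rather than a proof, and you yourself arrive at the correct conclusion in the final paragraph: the tension between freeness and saturation, especially in the minimum-degree-one regime, is exactly the obstruction, and this is why the statement is posed as a conjecture. The K\'aszonyi--Tuza-style template you sketch (core plus sparse remainder, shadow argument for freeness) is a sensible line of attack and is consistent with how the paper handles its individual cases, but as you note it does not go through uniformly, and the paper does not claim otherwise.
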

This conjecture is supported by the results in this paper and our explorations, as every lower bound we have found is at most linear. The authors have also found a few techniques to show linearity for some special classes of graphs not discussed in this paper, but no bound that works for all graphs and all uniformities.

It would also be interesting to see if Berge saturation exhibits any  monotonicity irregularities similar to those discussed by K\'aszonyi and Tuza in \cite{KT_minedge_1986}.

\subsection*{Acknowledgements}
The authors would like to thank Xiaohan Cheng, Andrzej Dudek, Jessica Fuller and Michael Ferrara for valuable discussions. Furthermore, the authors would like to thank the Graduate Research Workshop for Combinatorics (GRWC2016) for facilitating the collaboration of the authors. 

The research of the fourth author is supported by a generous grant from the Combinatorics Foundation and by the Hungarian National Research, Development and Innovation Office -- NKFIH under the grant K116769.

\clearpage
\bibliographystyle{plain}
\bibliography{bsat}

\end{document}